\newtheoremstyle{normalstyle}
	{\baselineskip}
	{}
	{\normalfont}
	{}
	{\normalfont\bfseries}
	{.\newline}
	{.5\baselineskip}
	{}
\theoremstyle{normalstyle}
\newtheorem{thm}{Theorem}[section]
\newtheorem{lem}[thm]{Lemma}
\newtheorem{cor}[thm]{Corollary}
\newtheorem{definition}[thm]{Definition}
\newtheorem{ex}[thm]{Example}
\renewcommand\p@subfigure{\thefigure\,}
\newcommand{\vect}[1]{\ensuremath{\mathbf{#1}}}
\newcommand{\mat}[1]{\ensuremath{\mathbf{#1}}}
\newcommand{\dMatVec}[1]{\ensuremath{\mathcal{J}_{#1}}}
\newcommand{\gSet}[2][\empty]{
	\ensuremath{%
		\mathcal G_{\mathrm{#1}}(#2)
	}%
}
\newcommand{\pSet}[2][\empty]{
	\ensuremath{%
		\mathcal P_{\!\mathrm{#1}}(#2)
	}%
}
\newcommand{\pSetX}[2][\empty]{
	\ensuremath{%
		\mathcal P_{\!#1}(#2)
	}%
}
\newcommand{\lattice}[1]{\ensuremath{\Lambda(#1)}}
\newcommand{\translationOp}[1]{\ensuremath{\text{T}_{#1}}} 
\newcommand{\modulationOp}[1]{\ensuremath{\text{\^T}_{#1}}} \newcommand{\E}{\ensuremath{\mathrm{e}}}
\newcommand{\I}{\ensuremath{\mathrm{i}}}
\newcommand{\D}{\ensuremath{\,\mathrm{d}}}
\newcommand{\T}{\ensuremath{\mathrm{T}}}
\newcommand{\sumJMaskOp}[4][\empty]{\ensuremath{\Phi_{#4}\mathopen{#1(}#2,#3\mathclose{#1)}}}
\DeclareFontFamily{U}{mathx}{\hyphenchar\font45}
\DeclareFontShape{U}{mathx}{m}{n}{
      <5> <6> <7> <8> <9> <10>
      <10.95> <12> <14.4> <17.28> <20.74> <24.88>
      mathx10
      }{}
\DeclareSymbolFont{mathx}{U}{mathx}{m}{n}
\DeclareMathSymbol{\bigtimes}{1}{mathx}{"91}
\newcommand{\TitleText}{Multivariate periodic wavelets of de la Vallée Poussin type}
\title{\vspace{-1\baselineskip}\huge \TitleText}
\author{
Ronny Bergmann%
\thanks{Department of Mathematics, University of Technology Kaiserslautern, Paul-Ehrlich-Stra\ss e 31, D-67653 Kaiserslautern, Germany, bergmann@mathematik.uni-kl.de}
\and
J{\"u}rgen Prestin\thanks{Institute of Mathematics, University of Lübeck, Ratzeburger Allee 160, D-23562 Lübeck, Germany.\newline prestin@math.uni-luebeck.de}
}
\date{}
\begin{document}\thispagestyle{empty}
	\maketitle
	\begin{abstract}
	\noindent
	In this paper we present a general approach to multivariate periodic wavelets generated by scaling functions of de la Vallée Poussin type. These scaling functions and their corresponding wavelets are determined by their Fourier coefficients, which are sample values of a function, that can be chosen arbitrarily smooth, even with different smoothness in each direction. This construction generalizes the one-dimensional de la Vallée Poussin means to the multivariate case and enables the construction of wavelet systems, where the set of dilation matrices for the two-scale relation of two spaces of the multiresolution analysis may contain shear and rotation matrices. It further enables the functions contained in each of the function spaces from the corresponding series of scaling spaces to have a certain direction or set of directions as their focus, which is illustrated by detecting jumps of certain directional derivatives of higher order.
	\end{abstract}
	\paragraph{Keywords.} wavelets, lattices, de la Vallée Poussin means, periodic multiresolution analysis, shift-invariant space
	\paragraph{Mathematical Subject Classification 2010.} 42C40, 65T60
	\section{Introduction}
	In the recent years, a framework for multivariate wavelets was
	developed~\citep{Bergmann:2013,GohLeeTeo:1999,Koh:1995,LangemannPrestin:2010,MeximenkoSkopina03,Skopina:2011}, generalizing the one-dimensional periodic wavelets, which were developed in~\citep{NaWa1996,Skopina:2011,PlonkaTasche:1994}, to the multivariate case.
	By introducing the possibility to use any integer matrix for the underlying
	patterns, this framework possesses anisotropic approximation
	properties~\citep{BergmannPrestin:2013}. The directional approach was also
	discussed in the case of wavelets on the \( \mathbb R^d \) recently, e.g.
	for the shearlets~\cite{Dahlke:2009} in order to detect
	singularities~\cite{GuoLabate:2010}. For the one-dimensional case the de la
	Vallée Poussin means and their corresponding periodic wavelets create a
	multiresolution analysis (MRA) with a fast decomposition algorithm and
	provide good localization~\citep{PrestinSelig:1998,Selig:1998}. These de la
	Vallée Poussin wavelets were used in \citep{MhaskarPrestin:2000} in order to
	detect singularities of a function. In~\citep{LangemannPrestin:2010} an MRA
	was constructed using Dirichlet kernels and a certain set of dilation
	matrices. In~\citep{Bergmann:2013} fast algorithms for both the Fourier and
	wavelet transform were presented for any set of dilation matrices and
	corresponding scaling functions forming an MRA.

	Our main goal is the construction of multivariate trigonometric functions
	defined through their Fourier coefficients by sampling a function \( g \) of
	certain smoothness. This function \( g \) can be of compact support and the
	sampling points vary for each space of the nested sequence of spaces that
	form the MRA. We investigate, how to also obtain the coefficients of the
	two-scale relation from this construction. For a special case, \( g \)
	resembles not only the de la Vallée Poussin means of the one-dimensional
	case, but also the mentioned Dirichlet kernels. Choosing a smooth function \(g\) follows the idea of obtaining well localized scaling functions which also lead to well localized periodic wavelets. This construction further
	introduces the possibility to use arbitrary dilation matrices, especially
	shearing matrices such as \(%
	\mat{J}_{\text{X}}^{\pm}:=
	\bigl(\begin{smallmatrix} 2&0\\\pm1&1 \end{smallmatrix}\bigr)
	\) and \( \mat{J}_{\text{Y}}^{\pm}:=
	\bigl(\begin{smallmatrix} 1&\pm1\\0&2 \end{smallmatrix}\bigr) \).
	The presented approach does not only enable a construction for an arbitrary
	set of scaling functions, for the dyadic case we also obtain a similar
	construction for the corresponding wavelets. Their analogues to the
	two-scale relation, i.e. the coefficients that characterize the wavelet in
	the scaling space they are nested in, is derived. Both, the two-scale
	relation and these coefficients can be computed just using \( g \) and the
	dilation matrix \( \mat{J} \) involved.

	We investigate for which functions \( g \) we can derive a construction of a
	set of nested dyadic spaces where each scaling function \( \varphi_j \)
	together with its translates
	forms a basis for the corresponding space and
	depends on the next scaling function \( \varphi_{j+1} \) at most. On the one
	hand, the construction enables an explicit description of a complete MRA using the
	function \( g \). On the other hand, it increases the flexibility of an
	adaptive MRA, where the sequence of dilation matrices \( \mat{J}_j \) and
	hence their scaling functions \( \varphi_j \) and wavelets \( \psi_j \) can
	be chosen adaptively. The scaling spaces obtained in this setting can also be
	used to construct and characterize anisotropic interpolation operators.
	Corresponding error estimates and function spaces are discussed
	in~\citep{BergmannPrestin:2013}.
	
	The remainder of this paper is organized as follows. In
	Section~\ref{sec:MRA} we first introduce basic preliminaries in order to
	define the multiresolution analysis for an arbitrary sequence of dilation
	matrices \( \{\mat{J}_j\}_{j>0} \). For the dyadic case, i.e. where
	\(|\det\mat{J}_j|=2 \) for all \( j>0 \), Lemma~\ref{lem:orthNInTranslates}
	characterizes the orthonormality of the translates of the corresponding
	wavelet. In Lemmata \ref{lem:MSA:LU}-\ref{lem:MR3inck} the MRA is described,
	by using the Fourier coefficients \( c_{\vect{k}}(\varphi_j) \) of the 
	scaling functions \( \varphi_j \).
	
	In Section~\ref{sec:dlVP1D} we introduce the notation of the classical de la
	Vallée Poussin means from~\citep{PrestinSelig:1998,Selig:1998} and refine
	its notation in order to demonstrate the challenges arising by building a
	direct multivariate analogue.
	
	The functions of de la Vallée Poussin type based on a smooth function
	\( g \) are presented in Section~\ref{sec:dlVP}. For a sequence of \( n \)
	dilation matrices we obtain \( n+1 \) scaling functions. The spaces of their
	translates are characterized in Theorem~\ref{thm:dlVP}, especially the
	spaces are nested and the Fourier coefficients of these scaling functions
	are sample values of a function that possesses the same smoothness
	properties as \( g \). For the dyadic case, i.e. both the scaling space
	corresponding to \(j\) and its orthogonal complement with respect to the
	next space \(j+1\) are  again a spaces of translates of just one function
	each, we obtain a wavelet of de la Vallée Poussin type, which is
	investigated in Theorem~\ref{thm:dlVPW}. It inherits the properties from the
	corresponding scaling functions and is also given through Fourier
	coefficients by sampling a smooth function, too.
	
	Finally, in Section~\ref{sec:dlVP-MRA} we investigate for which functions
	\( g \) the dyadic scaling functions
	\( \varphi_{\mat{M}_l}^{\mathcal J_{l+1,n}} \) do not depend on the complete
	vector of matrices \( \mathcal J_{l+1,n} \), but only on the first matrix
	\( \mat{J}_{l+1} \). This generalizes the one-dimensional case, where only
	the scaling factor \( 2 \) was fixed. Starting from the two-dimensional
	case, we also investigate the \( d \)-dimensional case. Finally, in
	Section~\ref{sec:Example} the wavelets of de la Vallée Poussin type are
	illustrated by decomposing a two-dimensional box spline with two specific
	wavelets in order to detect singularities of its higher order directional
	derivatives.
\section{The multiresolution analysis}\label{sec:MRA}
	For a regular matrix \( \mat{M}\in\mathbb Z^{d\times d} \),
	\( d\in\mathbb N \), we define the lattice \( \lattice{\mat{M}}
	:= \mat{M}^{-1}\mathbb Z^d \) and obtain an equivalence relation with
	respect to \( \bmod \mathbf{1} \) on \( \lattice{\mat{M}} \), where
	\(\mathbf{1} \) denotes the vector \( (1,\ldots,1)^\T\in\mathbb R^d\).
	For a vector \(\vect{x}\in\mathbb R^d\) we denote the usual Euclidean norm
	by \(\|\vect{x}\|\). A pattern \( \pSet{\mat{M}} \) is defined as any set of
	representatives of\( \bmod \mathbf{1} \) on the lattice
	\( \lattice{\mat{M}} \). Any pattern \( \pSet{\mat{M}} \) equipped with the
	addition \( \bmod \mathbf{1} \) generates an abelian group. Especially with
	\( \mathcal Q_d := \bigl[-\tfrac{1}{2},\tfrac{1}{2}\bigr)^d \) the two sets 
	\[
		\pSet[S]{\mat{M}} := \mat{M}^{-1}\mathbb Z^d\cap \mathcal Q_d
		\quad\text{ and }\quad
		\pSet[I]{\mat{M}} := \mat{M}^{-1}\mathbb Z^d\cap [0,1)^d 
	\]
	are patterns. In the following we will omit the notation
	\( \bmod \mathbf{1}\), wherever it is clear from the context, that the
	addition of two elements \( \vect{x}+\vect{y} \) of a pattern is performed.

	For a multivariate \( 2\pi \)-periodic function
	\( f: \mathbb T^d\to\mathbb R \), where
	\( \mathbb T^d:=\mathbb R^d/2\pi\mathbb Z^d \) denotes the
	\( d \)-dimensional torus, we can apply the shift operator
	\( \translationOp{\vect{y}}f := f(\circ - 2\pi \vect{y}) \),
	\( \vect{y}\in\mathbb R^d \). 

	For any factorization \( \mat{M}=\mat{J}\mat{N} \), the pattern
	\( \pSetX[X]{\mat{N}} \) is a subset of \( \pSetX[X]{\mat{M}} \),
	\( X\in\{\text{S},\text{I}\} \). For this factorization we use the same
	notation for any set of representatives, i.e.
	\( \pSet{\mat{N}} \subset \pSet{\mat{M}} \).
	Using the congruence relation  on \( \mathbb Z^d \) with respect to a
	regular matrix \( \mat{M}\in\mathbb Z^{d\times d} \), which is defined by
	\[
		\vect{h}\equiv\vect{k}\bmod\mat{M}
		\Leftrightarrow
		\exists\,\vect{z}\in\mathbb Z^{d\times d}
		:
		\vect{k} = \vect{h}+\mat{M}^\T\vect{z},\quad \vect{k},\vect{h}\in\mathbb Z^d\text{,}
	\]
	we further define the generating set \( \gSet{\mat{M}} \) as any set of
	congruence representants of the equivalence relation \( \bmod\mat{M}  \).
	Due to the bijectivity of the linear map \( \mat{M}\circ \), a pattern
	always derives a generating set \( \gSet{\mat{M}}
	= \mat{M}\pSet{\mat{M}} \), in particular
	\[
		\gSet[S]{\mat{M}} := \mathbb Z^d\cap \mat{M}\mathcal Q_d
		\quad\text{ and }\quad
		\gSet[I]{\mat{M}} := \mathbb Z^d\cap \mat{M}[0,1)^d 
	\]
	are generating sets. When performing additions on the generating set
	\(\mathcal G(\mathbf{M})\), i.e. with respect to \(\bmod \mathbf{M}\), we
	omit the modulo, when it is clear from the context, that the result is again
	an element from \(\mathcal G(\mathbf{M})\). Using a geometrical argument
	\cite[Lemma~II.7]{deBoorHoelligRiemenscheider:1993}, it holds
	\[
		|\pSet{\mat{M}}|
		= |\pSet{\mat{M}^\T}|
		= |\gSet{\mat{M}}|
		= |\gSet{\mat{M}^\T}|
		= |\det\mat{M}| =:m\text{.}
	\]

	For any regular matrix \( \mat{M}\in\mathbb Z^{d\times d} \) the Fourier
	matrix \( \mathcal F(\mat{M}) \) is given by
	\begin{equation}\label{eq:def:fouriermatrix}
		\mathcal F (\mat{M})
		:=
		\frac{1}{\sqrt{m}}
		\left(
			\E^{-2\pi\I \vect{h}^\T\vect{y}}
		\right)_{\vect{h}\in\gSet{\mat{M}^\T},\vect{y}\in\pSet{\mat{M}}}
		\in \mathbb C^{m\times m}\text{,}
	\end{equation}
	where the pattern \( \pSet{\mat{M}} \) addressing the columns and the
	generating set \( \gSet{\mat{M}} \) addressing the rows are ordered in an
	arbitrary but fixed way. For any vector
	\( \vect{a} = \bigl(a_{\vect{y}}\bigr)_{\vect{y}\in\pSet{\mat{M}}} \) having
	the same order as the columns of the Fourier matrix
	\( \mathcal F(\mat{M}) \) the discrete Fourier transform with respect to
	\( \mat{M} \) is defined by
	\begin{equation*}
		\vect{\hat a}
		:= (\hat a_{\vect{h}})_{\vect{h}\in\gSet{\mat{M}^\T}}
		= \sqrt{m}\mathcal F(\mat{M})\vect{a}\in\mathbb C^{m}\text{,}
	\end{equation*}
	where \(\vect{\hat a} = (a_{\vect{h}})_{\vect{h}\in\gSet{\mat{M}^\T}}\) has
	the same order of the elements as the rows of \(\mathcal F(\mat{M})\). For a
	certain order of the elements of both sets, a fast Fourier transform
	exists~\citep[Section 4]{Bergmann:2013}.

	A subspace \( V \) of functions of the Hilbert space \( L_2(\mathbb T^d) \)
	on the torus \( \mathbb T^d \) is called shift-invariant with respect to
	\( \mat{M} \), or \( \mat{M} \)-invariant, if
	\[
		\forall \varphi \in V,\ \vect{y}\in\pSet{\mat{M}}
		\,:\,\translationOp{\vect{y}}\varphi 
		:= \varphi(\circ-2\pi\vect{y}) \in V
		\text{.}
	\]
	An important example of an \( \mat{M} \)-invariant space is given by the
	span of translates of \( \varphi \in L_2(\mathbb T^d) \), i.e.
	\[
		V_{\mat{M}}^{\,\varphi}
		:= \operatorname{span}
		\bigl\{ 			\translationOp{\vect{y}}\varphi\,:\,\vect{y}\in\pSet{\mat{M}}
		\bigr\}\text{,}
	\]
	where each function \( f\in V_{\mat{M}}^{\,\varphi} \) can be written as
	\begin{equation}\label{eq:coeff:a}
		f =
			\sum_{\vect{y}\in\pSet{\mat{M}}}a_{\vect{y}}
			\translationOp{\vect{y}}\varphi
			,
			\quad
			a_{\vect{y}}\in\mathbb C
			\text{.}
	\end{equation}
	This can also be equivalently formulated using the scalar product of the
	Hilbert space \( L_2(\mathbb T^d) \), the Fourier coefficients
	\[
		c_{\vect{k}}(\varphi) := \langle \varphi, \E^{\I \vect{k}^\T\circ}\rangle
		=
		\frac{1}{(2\pi)^d}\int_{\mathbb T^d}
		\varphi(\vect{x})\E^{\I\vect{k}^\T\vect{x}}\D\vect{x}
		,\quad
		\vect{k}\in\mathbb Z^d\text{,}
	\]
	and the Parseval equation, as~\citep[Theorem 3.3]{LangemannPrestin:2010}
	\begin{equation}\label{eq:coeff:hata}
		c_{\vect{h}+\mat{M}^\T\vect{z}}(f) = \hat a_{\vect{h}}c_{\vect{h}+\mat{M}^\T\vect{z}}(\varphi)
		\quad
		\text{ holds for all }
		\vect{h}\in\gSet{\mat{M}},\vect{z}\in\mathbb Z^d
		\text{,}
	\end{equation}
	where \( \vect{\hat a}
	= (\hat a_{\vect{h}})_{\vect{h}\in\gSet{\mat{M}^\T}} \) is the discrete
	Fourier transform of a vector of coefficients \( \vect{a}
	= (a_{\vect{y}})_{\vect{y}\in\pSet{\mat{M}}} \) from~\eqref{eq:coeff:a}. 
		
	For any factorization \( \mat{M} = \mat{J}\mat{N} \) the sub-pattern
	\( \pSet{\mat{N}} \) also induces a subspace, i.e. for a function
	\( \eta\in V_{\mat{M}}^{\varphi} \) we obtain
	\( V_{\mat{N}}^{\,\eta}\subset V_{\mat{M}}^{\,\varphi} \). Using
	the coefficients characterizing \(\eta\) with respect to the translates
	of \(\varphi\), cf. \citep[Lemma 4.2 (i) and (ii)]{LangemannPrestin:2010},
	we can characterize each function \(f\in V_{\mathbf{N}}^{\,\eta}\) also in
	\(V_{\mat{M}}^{\varphi} \). We extend this characterization by the following
	Lemma to investigate the orthogonality of the translates
	\( \translationOp{\vect{x}}\eta \), \( \vect{x}\in\pSet{\mat{N}} \), of the
	subspace.
	\begin{lem}\label{lem:orthInTranslates}
		Let \( \mat{M} = \mat{J}\mat{N} \) and a function
		\( \varphi\in L_2(\mathbb T^d) \) be given, such that the shifts
		\( \translationOp{\vect{y}}\varphi \), \( \vect{y}\in\pSet{\mat{M}} \),
		are an orthonormal basis of the \( \mat{M} \)-invariant space
		\( V_{\mat{M}}^{\,\varphi} \). Let \( \eta
		= \sum_{\vect{y}\in\pSet{\mat{M}}} b_{\vect{y}}\translationOp{\vect{y}}\varphi \in V_{\mat{M}}^{\,\varphi} \)
		be given. Then the translates \( \translationOp{\vect{x}}\eta \),
		\( \vect{x}\in\pSet{\mat{N}} \), are orthonormal if and only if
		\[
			\sum_{\vect{g}\in\gSet{\mat{J}^\T}}
			|\hat b_{\vect{k}+\mat{N}^\T\vect{g}}|^2 = |\det\mat{J}|
			\quad
			\text{ holds for all }
			\vect{k}\in\gSet{\mat{N}^\T}
			\text{,}
		\]
		where \( \vect{\hat b} \) is the discrete Fourier Transform of
		\( \vect{b} = \bigl(b_{\vect{y}}\bigr)_{\vect{y}\in\pSet{\mat{M}}} \).
	\end{lem}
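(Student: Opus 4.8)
The plan is to reduce everything to Fourier coefficients and then apply the orthogonality of characters on a finite abelian group twice. First I would pass to Fourier space. Since a shift satisfies \( c_{\vect{k}}(\translationOp{\vect{x}}\eta) = \E^{2\pi\I\vect{k}^\T\vect{x}}c_{\vect{k}}(\eta) \) and the exponentials \( \E^{\I\vect{k}^\T\circ} \) form an orthonormal basis of \( L_2(\mathbb T^d) \), Parseval's identity gives, for \( \vect{x},\vect{x}'\in\pSet{\mat{N}} \),
\[
	\langle\translationOp{\vect{x}}\eta,\translationOp{\vect{x}'}\eta\rangle
	= \sum_{\vect{k}\in\mathbb Z^d}\E^{2\pi\I\vect{k}^\T(\vect{x}-\vect{x}')}|c_{\vect{k}}(\eta)|^2 .
\]
Next I would substitute \( c_{\vect{h}+\mat{M}^\T\vect{z}}(\eta) = \hat b_{\vect{h}}\,c_{\vect{h}+\mat{M}^\T\vect{z}}(\varphi) \) from~\eqref{eq:coeff:hata}, splitting the index as \( \vect{k}=\vect{h}+\mat{M}^\T\vect{z} \) with \( \vect{h}\in\gSet{\mat{M}^\T} \) and \( \vect{z}\in\mathbb Z^d \).

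The decisive simplification uses \( \vect{x}-\vect{x}'\in\lattice{\mat{N}} \), whence \( \mat{M}(\vect{x}-\vect{x}') = \mat{J}\mat{N}(\vect{x}-\vect{x}')\in\mathbb Z^d \) and therefore \( \E^{2\pi\I(\mat{M}^\T\vect{z})^\T(\vect{x}-\vect{x}')}=1 \); the sum over \( \vect{z} \) then decouples from the phase, leaving the weight \( w_{\vect{h}} := \sum_{\vect{z}\in\mathbb Z^d}|c_{\vect{h}+\mat{M}^\T\vect{z}}(\varphi)|^2 \). Running the identical Parseval computation on the hypothesis \( \langle\translationOp{\vect{y}}\varphi,\translationOp{\vect{y}'}\varphi\rangle = \delta_{\vect{y},\vect{y}'} \), \( \vect{y},\vect{y}'\in\pSet{\mat{M}} \), rewrites it as \( \sum_{\vect{h}\in\gSet{\mat{M}^\T}}w_{\vect{h}}\,\E^{2\pi\I\vect{h}^\T(\vect{y}-\vect{y}')} = \delta_{\vect{y},\vect{y}'} \); inverting this discrete Fourier transform on the group \( \pSet{\mat{M}} \) of order \( m \) forces \( w_{\vect{h}} = 1/m \) for every \( \vect{h} \). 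This yields
\[
	\langle\translationOp{\vect{x}}\eta,\translationOp{\vect{x}'}\eta\rangle
	= \frac1m\sum_{\vect{h}\in\gSet{\mat{M}^\T}}|\hat b_{\vect{h}}|^2\,\E^{2\pi\I\vect{h}^\T(\vect{x}-\vect{x}')} .
\]

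Now I would refine the index set. Because \( \mat{M}^\T=\mat{N}^\T\mat{J}^\T \), the tower \( \mat{M}^\T\mathbb Z^d\subset\mat{N}^\T\mathbb Z^d\subset\mathbb Z^d \) lets me write each \( \vect{h}\in\gSet{\mat{M}^\T} \) uniquely as \( \vect{h}=\vect{k}+\mat{N}^\T\vect{g} \) with \( \vect{k}\in\gSet{\mat{N}^\T} \) and \( \vect{g}\in\gSet{\mat{J}^\T} \), the counts matching since \( |\det\mat{N}|\cdot|\det\mat{J}|=m \). As \( \mat{N}(\vect{x}-\vect{x}')\in\mathbb Z^d \) too, the factor \( \E^{2\pi\I(\mat{N}^\T\vect{g})^\T(\vect{x}-\vect{x}')} \) is again \( 1 \), and grouping the sum gives
\[
	\langle\translationOp{\vect{x}}\eta,\translationOp{\vect{x}'}\eta\rangle
	= \frac1m\sum_{\vect{k}\in\gSet{\mat{N}^\T}}
	\Bigl(\sum_{\vect{g}\in\gSet{\mat{J}^\T}}|\hat b_{\vect{k}+\mat{N}^\T\vect{g}}|^2\Bigr)
	\E^{2\pi\I\vect{k}^\T(\vect{x}-\vect{x}')} .
\]

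Finally I would invoke the orthogonality of the characters \( \vect{x}\mapsto\E^{2\pi\I\vect{k}^\T\vect{x}} \), \( \vect{k}\in\gSet{\mat{N}^\T} \), on the group \( \pSet{\mat{N}} \) of order \( n:=|\det\mat{N}| \). The translates of \( \eta \) are orthonormal exactly when the displayed sum equals \( \delta_{\vect{x},\vect{x}'} \) for all \( \vect{x},\vect{x}'\in\pSet{\mat{N}} \); pairing both sides with \( \E^{-2\pi\I\vect{k}^\T\vect{x}} \) and summing over \( \pSet{\mat{N}} \) inverts the transform and shows this is equivalent to \( \sum_{\vect{g}\in\gSet{\mat{J}^\T}}|\hat b_{\vect{k}+\mat{N}^\T\vect{g}}|^2 = m/n = |\det\mat{J}| \) for every \( \vect{k} \), which is the asserted condition. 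The discrete Fourier inversions are routine; the main obstacle I anticipate is the transpose-and-coset bookkeeping of the third step — verifying that \( \gSet{\mat{M}^\T} \) splits cleanly along \( \gSet{\mat{N}^\T} \) and \( \gSet{\mat{J}^\T} \) and that precisely the \( \mat{M}^\T\vect{z} \)- and \( \mat{N}^\T\vect{g} \)-phases drop out because \( \vect{x}-\vect{x}' \) lies in \( \lattice{\mat{N}} \).
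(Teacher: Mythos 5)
Your proposal is correct and takes essentially the same route as the paper's proof: both reduce orthonormality of the translates \( \translationOp{\vect{x}}\eta \) to the Fourier-coefficient criterion \( \sum_{\vect{w}\in\mathbb Z^d}|c_{\vect{k}+\mat{N}^\T\vect{w}}(\eta)|^2 = 1/n \), split the lattice sum along the cosets \( \vect{w}=\vect{g}+\mat{J}^\T\vect{z} \) with \( \vect{g}\in\gSet{\mat{J}^\T} \), substitute~\eqref{eq:coeff:hata}, and absorb \( \sum_{\vect{z}}|c_{\vect{h}+\mat{M}^\T\vect{z}}(\varphi)|^2 = 1/m \) coming from the orthonormality of the \( \varphi \)-translates. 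The only difference is presentational: where the paper cites \citep[Corollary~3.6]{LangemannPrestin:2010} for that criterion (applying it both to \( \eta \) over \( \pSet{\mat{N}} \) and, implicitly, to \( \varphi \) over \( \pSet{\mat{M}} \)), you re-derive it inline via Parseval and discrete character orthogonality, which makes the argument self-contained but does not change its substance.
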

	\begin{proof}
		Let \(m=\lvert \det \mat{M}\rvert\) and \(n=\lvert \det \mat{N}\rvert\). Using \citep[Corollary 3.6]{LangemannPrestin:2010},~\eqref{eq:coeff:hata}
		and the orthonormality of the translates
		\( \translationOp{\vect{y}}\varphi \), \( \vect{y}\in\pSet{\mat{M}} \),
		the orthonormality of the translates \( \translationOp{\vect{x}}\eta \),
		\( \vect{x}\in\pSet{\mat{N}} \), \(\eta\in V_{\mat{M}}^{\,\varphi}\) as
		above, is equivalent to the fact that it holds for all
		\( \vect{k}\in\gSet{\mat{N}^\T} \)
		\begin{align*}
			|\det{\mat{J}}| = 
			\frac{m}{n} &=  m\sum_{\vect{w}\in\mathbb Z^d}
			|c_{\vect{k}+\mat{N}^\T\vect{w}}(\eta)|^2\\
			&= m\sum_{\vect{g}\in\gSet{\mat{J}^\T}}
				|\hat b_{\vect{k}+\mat{N}^\T\vect{g}}|^2
				\sum_{\vect{z}\in\mathbb Z^d}
				|c_{\vect{k}+\mat{N}^\T\vect{g}+\mat{M}^\T\vect{z}}(\eta)|^2\\
			&= \sum_{\vect{g}\in\gSet{\mat{J}^\T}}
			|\hat b_{\vect{k}+\mat{N}^\T\vect{g}}|^2\text{.}
			\qedhere
		\end{align*}
	\end{proof}
	If \( f\in V_{\mat{M}}^{\,\varphi} \) is known by its coefficient vector
	\( \vect{a}\in\mathbb C^m \) from~\eqref{eq:coeff:a}
	, then the projection
	\( f_{\mat{N},\eta} \) of \( f \) onto \( V_{\mat{N}}^{\,\eta} \) is given
	in matrix form in \citep[Lemma 4.2 (iii)]{LangemannPrestin:2010} and can be
	reformulated as~\citep[(1.50)]{Bergmann:2013b}
	\begin{equation}\label{eq:projection}
		\hat{d}_{\vect{k}}
		=
		\frac{1}{\sqrt{|\det{\mat{J}}|}}
		\sum_{\vect{g}\in\gSet{\mat{J}^\T}}
		\overline{\hat a_{\vect{k+\mat{N}^\T\vect{g}}}}
		\hat b_{\vect{k}+\mat{N}^\T\vect{g}}
		\text{,}
		\quad
		\vect{k}\in\gSet{\mat{N}^\T}
		\text{,}			
	\end{equation}
	such that \( f_{\mat{N},\eta}
	= \sum_{\vect{x}\in\pSet{\vect{N}}}d_{\vect{x}}
		\translationOp{\vect{x}}\eta \), where again
	\( \vect{\hat d}\in\mathbb C^n \) 
	is the discrete Fourier transform (with respect to \( \mat{N} \)) of
	\( \vect{d}\in\mathbb C^n \). 

	Let the translates \( \translationOp{\vect{y}}\varphi \),
	\(\vect{y}\in\pSet{\mat{M}}\), be linearly independent. Let further a set of
	functions \( \eta_1,\ldots,\eta_{|\det{\mat{J}}|}\in V_{\mat{M}}^{\varphi} \)
	be given such that the translates \( \translationOp{\vect{x}}\eta_j \) are
	linearly independent for each \( j=1,\ldots,|\det\mat{J}| \). If these
	translates are mutually orthogonal, i.e.
	\(\langle\translationOp{\vect{x}}\eta_i,\translationOp{\vect{y}}\eta_j\rangle
	=0 \) for each \( i \neq j \), \( i,j\in \{1,\ldots,|\det\mat{J}|\}\), and
	\( \vect{x}\neq\vect{y} \), \( \vect{x},\vect{y}\in\pSet{\mat{N}} \), we
	obtain an orthogonal decomposition of the space
	\( V_{\mat{M}}^{\,\varphi} \) into
	\begin{equation}
		\label{eq:decomp:VM}
		V_{\mat{M}}^{\,\varphi} = \bigoplus_{j=1}^{|\det{\mat{J}}|} V_{\mat{N}}^{\,\eta_j}\text{.}
	\end{equation}
	By applying~\eqref{eq:projection} to each of the subspaces
	\( V_{\mat{N}}^{\,\eta_j} \), we obtain a decomposition algorithm in
	\( \mathcal O(m) \) steps, cf.~\cite[Subsection 5.3]{Bergmann:2013}.

	For the case of a dyadic decomposition, i.e. \(|\det{\mat{J}}|=2 \), the
	following Lemma states how to construct the orthogonal complement in the
	direct sum of~\eqref{eq:decomp:VM}. While Theorem 4.3
	of~\citep{LangemannPrestin:2010} presents a general construction for the
	translates of the orthogonal complement, this lemma further characterizes,
	how to obtain these translates \(T_{\vect{x}}\psi\),
	\(\vect{x}\in\pSet{\mat{N}}\), as an orthonormal basis of the corresponding
	space.
	\begin{lem}\label{lem:orthNInTranslates}
		Let \( \mat{M}=\mat{J}\mat{N} \) be a decomposition of the regular matrix
		\( \mat{M}\in\mathbb Z^{d\times d} \) into integer matrices
		\( \mat{N},\,\mat{J}\in\mathbb Z^{d\times d} \), where
		\(|\det\mat{J}|=2 \). Let \( \varphi,\eta\in L_2(\mathbb T^d) \) be two
		functions, such that the translates
		\( \translationOp{\vect{y}}\varphi \),\( \vect{y}\in\pSet{\mat{M}} \),
		and \( \translationOp{\vect{x}}\eta \), \( \vect{x}\in\pSet{\mat{N}} \),
		are orthonormal bases of \( V_{\mat{M}}^{\,\varphi} \) and
		\( V_{\mat{N}}^{\,\eta} \) respectively, where
		\[
			c_{\vect{h}+\mat{M}^\T\vect{z}}(\eta)
			= 
			\hat{a}_{\vect{h}}
			c_{\vect{h}+\mat{M}^\T\vect{z}}(\varphi)
			,\quad
			\vect{h}\in\gSet{\mat{M}^\T},\ 
			\vect{z}\in\mathbb Z^d
			\text{.}
		\]
		Then the orthogonal complement of \( V_{\mat{N}}^{\,\eta} \) in
		\( V_{\mat{M}}^{\,\varphi} \) is again a shift-invariant space of the form
		\( V_{\mat{N}}^{\,\psi} \) possessing the orthonormal basis
		\( \translationOp{\vect{x}}\psi \), \( \vect{x}\in\pSet{\mat{N}} \), if
		and only if there exist numbers
		\(\sigma_{\vect{h}}\in\mathbb C\backslash\{0\} \), which fulfill
		\begin{align*}
			\sigma_{\vect{k}} &= -\sigma_{\vect{k}+\mat{N}^\T\vect{g}}
			\text{ for }\vect{g}\in\gSet{\mat{J}^\T}\backslash\{\vect{0}\},
			\vect{k}\in\gSet{\mat{N}^\T}
			\intertext{ and }
			|\sigma_{\vect{h}}| &= 1\text{ for }\vect{h}\in\gSet{\mat{M}^\T}\text{.}
			\end{align*}
			The function \( \psi \) is given by its Fourier coefficients as
			\[
			c_{\vect{h}+\mat{M}^\T\vect{z}}(\psi)
			=
			\sigma_{\vect{h}}\hat{a}_{\vect{h}+\mat{N}^\T\vect{g}}
			c_{\vect{h}+\mat{M}^\T\vect{z}}(\varphi)
			,\quad
			\vect{h}\in\gSet{\mat{M}^\T}, \ 
			\vect{z}\in\mathbb Z^d
			\text{.}
			\]
		\end{lem}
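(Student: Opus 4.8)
The plan is to translate every geometric requirement into one on the discrete masks and then to solve a family of decoupled two-dimensional problems. First I would write the candidate generator as \( \psi = \sum_{\vect{y}\in\pSet{\mat{M}}} b_{\vect{y}}\translationOp{\vect{y}}\varphi \in V_{\mat{M}}^{\,\varphi} \) and pass, via~\eqref{eq:coeff:hata}, to its mask \( \hat b_{\vect{h}} \), so that \( c_{\vect{h}+\mat{M}^\T\vect{z}}(\psi) = \hat b_{\vect{h}}\,c_{\vect{h}+\mat{M}^\T\vect{z}}(\varphi) \). The two demands on \( \psi \) — that \( \translationOp{\vect{x}}\psi \), \( \vect{x}\in\pSet{\mat{N}} \), be orthonormal and that \( V_{\mat{N}}^{\,\psi} \) be orthogonal to \( V_{\mat{N}}^{\,\eta} \) — then become conditions on \( \hat b \). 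The first is exactly Lemma~\ref{lem:orthInTranslates}, giving \( \sum_{\vect{g}\in\gSet{\mat{J}^\T}}|\hat b_{\vect{k}+\mat{N}^\T\vect{g}}|^2 = 2 \) for all \( \vect{k}\in\gSet{\mat{N}^\T} \). The second, obtained by setting the projection of \( \psi \) onto \( V_{\mat{N}}^{\,\eta} \) in~\eqref{eq:projection} to zero, reads \( \sum_{\vect{g}\in\gSet{\mat{J}^\T}}\overline{\hat a_{\vect{k}+\mat{N}^\T\vect{g}}}\,\hat b_{\vect{k}+\mat{N}^\T\vect{g}} = 0 \) for all \( \vect{k} \). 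The orthonormality of \( \translationOp{\vect{x}}\eta \) supplies the companion identity \( \sum_{\vect{g}\in\gSet{\mat{J}^\T}}|\hat a_{\vect{k}+\mat{N}^\T\vect{g}}|^2 = 2 \), again from Lemma~\ref{lem:orthInTranslates}.

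Next I would exploit the dyadic hypothesis \( |\det\mat{J}|=2 \): the set \( \gSet{\mat{J}^\T}=\{\vect{0},\vect{g}\} \) has exactly two elements and \( 2\mat{N}^\T\vect{g}\in\mat{M}^\T\mathbb Z^d \). Hence each \( \vect{k}\in\gSet{\mat{N}^\T} \) indexes precisely the pair \( \vect{h}_1=\vect{k} \), \( \vect{h}_2=\vect{k}+\mat{N}^\T\vect{g} \) of residues in \( \gSet{\mat{M}^\T} \), with \( \vect{h}_2+\mat{N}^\T\vect{g}\equiv\vect{h}_1\bmod\mat{M}^\T \). For each such \( \vect{k} \) the three identities above say exactly that, in \( \mathbb C^2 \), the vector \( (\hat b_{\vect{h}_1},\hat b_{\vect{h}_2}) \) has norm \( \sqrt2 \) and is orthogonal to \( (\hat a_{\vect{h}_1},\hat a_{\vect{h}_2}) \), which itself has norm \( \sqrt2 \). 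Since the orthogonal complement of a nonzero vector in \( \mathbb C^2 \) is one-dimensional and is spanned by \( (\overline{\hat a_{\vect{h}_2}},-\overline{\hat a_{\vect{h}_1}}) \), again of norm \( \sqrt2 \), the \( \psi \)-mask on this pair is forced to be a unimodular multiple \( \tau \) of that spanning vector. Writing \( \sigma_{\vect{h}_1}=\tau \) on the first slot and \( \sigma_{\vect{h}_2}=-\tau \) on the second reproduces both the normalization \( |\sigma_{\vect{h}}|=1 \), the sign relation \( \sigma_{\vect{k}}=-\sigma_{\vect{k}+\mat{N}^\T\vect{g}} \), and the stated expression for \( c_{\vect{h}+\mat{M}^\T\vect{z}}(\psi) \); the argument is reversible slot by slot, which delivers both directions of the equivalence.

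Finally I would assemble the slotwise data into the global statement: the relations hold simultaneously for all \( \vect{k}\in\gSet{\mat{N}^\T} \), so the existence of the family \( \{\sigma_{\vect{h}}\} \) is equivalent to \( V_{\mat{N}}^{\,\psi} \) having orthonormal translates and being orthogonal to \( V_{\mat{N}}^{\,\eta} \). To upgrade \enquote{orthogonal subspace} to \enquote{orthogonal complement} I would count dimensions: \( V_{\mat{N}}^{\,\eta} \) and \( V_{\mat{N}}^{\,\psi} \) each have dimension \( n=|\det\mat{N}| \), are orthogonal, and lie in \( V_{\mat{M}}^{\,\varphi} \) of dimension \( m=|\det\mat{M}|=2n \), forcing \( V_{\mat{M}}^{\,\varphi}=V_{\mat{N}}^{\,\eta}\oplus V_{\mat{N}}^{\,\psi} \) as in~\eqref{eq:decomp:VM}. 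The step I expect to be the main obstacle is the complex-conjugate bookkeeping in the orthogonality relation: one must track on which mask the conjugation from the inner product lands, verify that it is precisely the sign flip \( \sigma_{\vect{k}}=-\sigma_{\vect{k}+\mat{N}^\T\vect{g}} \) that cancels the cross term \( \overline{\hat a_{\vect{h}_1}}\,\overline{\hat a_{\vect{h}_2}} \), and confirm that \( |\sigma_{\vect{h}}|=1 \) is forced and not merely sufficient. The remaining steps — the reductions through~\eqref{eq:coeff:hata} and~\eqref{eq:projection} and the elementary \( \mathbb C^2 \) geometry — are routine.
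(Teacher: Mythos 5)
Your proposal is correct, but it follows a genuinely more self-contained route than the paper. The paper's proof outsources the structural part to Theorem~4.3 of \citep{LangemannPrestin:2010}, which directly characterizes when the translates \( \translationOp{\vect{x}}\psi \), \( \vect{x}\in\pSet{\mat{N}} \), span the orthogonal complement of \( V_{\mat{N}}^{\,\eta} \) in \( V_{\mat{M}}^{\,\varphi} \) --- namely iff \( \hat b_{\vect{h}} = m\sigma'_{\vect{h}}\overline{\hat a_{\vect{h}+\mat{N}^\T\vect{g}}} \) for some nonzero sign-flipping family \( \sigma'_{\vect{h}} \) --- so that the paper only needs Lemma~\ref{lem:orthInTranslates} once, to pin down \( 2 = 2m^2|\sigma'_{\vect{k}}|^2 \) and rescale \( \sigma_{\vect{h}} = m\sigma'_{\vect{h}} \). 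You instead re-derive the content of that cited theorem in the dyadic case from first principles: Lemma~\ref{lem:orthInTranslates} applied to both \( \eta \) and \( \psi \) gives the two norm conditions, vanishing of the projection \eqref{eq:projection} gives the orthogonality condition, the observation \( 2\mat{N}^\T\vect{g}\in\mat{M}^\T\mathbb Z^d \) makes the slotwise pairing \( \vect{h}_1=\vect{k} \), \( \vect{h}_2=\vect{k}+\mat{N}^\T\vect{g} \) consistent, the one-dimensionality of the orthogonal complement of \( (\hat a_{\vect{h}_1},\hat a_{\vect{h}_2})\neq\vect{0} \) in \( \mathbb C^2 \), spanned by \( (\overline{\hat a_{\vect{h}_2}},-\overline{\hat a_{\vect{h}_1}}) \), forces both the sign flip and \( |\sigma_{\vect{h}}|=1 \), and the dimension count \( n+n=m \) upgrades \enquote{orthogonal subspace} to \enquote{orthogonal complement} (the last step replacing what the cited theorem delivers for free). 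All of these steps check out, and the argument is indeed reversible, so both directions of the equivalence hold. What your route buys is independence from the external Theorem~4.3 and a transparent explanation of why unimodularity is forced rather than merely sufficient; what the paper's route buys is brevity. One point you were right to flag: your derivation yields \( \hat b_{\vect{h}} = \sigma_{\vect{h}}\overline{\hat a_{\vect{h}+\mat{N}^\T\vect{g}}} \), with a conjugate, exactly as in the paper's own intermediate computation, whereas the displayed formula in the lemma statement omits the conjugate; the conjugated version is the correct one, and the discrepancy is immaterial in the paper's later applications (Theorem~\ref{thm:dlVPW}), where the coefficients \( \hat a_{l,\vect{h}} \) are real and nonnegative, the unimodular phase being absorbed into \( \sigma_{\vect{h}} \).
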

		\begin{proof}
		Using the orthonormality of the translates
		\( \translationOp{\vect{y}}\varphi \), \( \vect{y}\in\pSet{\mat{M}} \),
		and \citep[Theorem~4.3]{LangemannPrestin:2010} we obtain that the
		translates \( \translationOp{\vect{x}}\psi \),
		\( \vect{x}\in\pSet{\mat{N}} \), span the orthogonal complement of
		\( V_{\mat{N}}^{\,\eta} \) in \( V_{\mat{M}}^{\,\varphi} \) if and only
		if for  \( \psi
		= \sum_{\vect{y}\in\pSet{\mat{M}}}
			b_{\vect{y}}\translationOp{\vect{y}}\varphi \) there exist
		coefficients \( \sigma'_{\vect{h}} \in\mathbb C\backslash\{0\} \) with
		\( \sigma'_{\vect{k}} = -\sigma'_{\vect{k}+\mat{N}^\T\vect{g}} \),
		\( \vect{g}\in\gSet{\mat{J}^\T}\backslash\{\vect{0}\} \),
		\( \vect{k}\in\gSet{\mat{N}^\T} \), such that
		\[
			\hat b_{\vect{h}}
			=
			\frac{\sigma'_{\vect{h}}
				\overline{\hat{a}_{\vect{h}+\mat{N}^\T\vect{g}}}%
			}{
				\sum\limits_{\vect{k}\in\mathbb Z^d}
					|c_{\vect{h}+\mat{M}^\T\vect{k}}(\eta)|^2
			}
		=
		m\sigma'_{\vect{h}}\overline{\hat{a}_{\vect{h}+\mat{N}^\T\vect{g}}},
		\quad\text{for all } \vect{h}\in\gSet{\mat{M}^\T}\text{.}
		\]
		Applying Lemma~\ref{lem:orthInTranslates} we conclude that the translates
		\( \translationOp{\vect{x}}\psi \), \( \vect{x}\in\pSet{\mat{N}} \), are
		orthonormal if and only if it holds for all
		\( \vect{k}\in\gSet{\mat{N}^\T} \)
		\begin{equation*}
			2 = |\hat b_{\vect{k}}|^2 + |\hat b_{\vect{k}+\mat{N}^\T\vect{g}}|^2
			  = m^2|\sigma'_{\vect{k}}|^2
				\bigl(
					|\hat a_{\vect{k}+\mat{N}^\T\vect{g}}|^2+|\hat a_{\vect{k}}|^2
				\bigr)
			  = 2m^2|\sigma'_{\vect{k}}|^2\text{,}
		\end{equation*}
		hence setting \( \sigma_{\vect{h}} = m\sigma'_{\vect{h}} \),
		\( \vect{h}\in\gSet{\mat{M}^\T} \), finishes the proof.
	\end{proof}
	\begin{definition}
		For a sequence \( \{\mat{J}_k\}_{k>0} \) of regular matrices
		\( \mat{J}_k\in\mathbb Z^{d\times d} \), \( |\det\mat{J}_k|>1 \),
		and a sequence of spaces \( \{V_j\}_{j\geq0} \),
		\( V_j\subset L_2(\mathbb T^d) \), we denote
		\( \mat{M}_0 := \mat{E}_d \), where
		\(\mat{E}_d\in\mathbb Z^{d\times d}\) is the unit matrix,
		\( m_0=1 \), and for \( j>0 \)
		\[
			\mat{M}_j
			:= \mat{J}_j\mat{M}_{j-1}
			= \prod_{k=j}^1\mat{J}_k = \mat{J}_{j}\cdot\ldots\cdot\mat{J}_1
			\quad
			\text{and}
			\quad
			m_j := |\det{\mat{M}_j}|\text{.}
		\]
		An anisotropic periodic multiresolution analysis of
		\( L_2(\mathbb T^d) \) is given by the tuple
		\( \bigl(\{\mat{J}_{k}\}_{k>0},\allowbreak\,\{V_{j}\}_{j\geq 0}\bigr) \)
		if the following properties hold
		\begin{enumerate}[label={MR}\arabic*,leftmargin=2.5em]
			\item For \( j\in \mathbb N_0 \), there exists a function
			\( \varphi_j\in V_j \), such that the translates
			\( \translationOp{\vect{y}}\varphi_j\),
			\( \vect{y}\in\pSet{\mat{M}_j} \), constitute a basis of \( V_j \).
			\label{MSA:Basis}
			\item For all \(j\in \mathbb N\), we have \(V_j \subset V_{j+1}\).
			\label{MSA:Subsets}
			\item The union of all \( V_j \) is dense in \( L_2(\mathbb T^d) \).
			\label{MSA:complete}
		\end{enumerate}
	\end{definition}
	The following Lemmata characterize these three properties of an anisotropic
	periodic multiresolution analysis
	\( \bigl(\{\mat{J}_{k}\}_{k>0},\allowbreak\,\{V_{j}\}_{j\geq 0}\bigr) \)
	using the Fourier coefficients \( c_{\vect{k}}(\varphi_j) \) of the scaling
	functions \( \varphi_j \). This was already examined for the one-dimensional
	case in \citep{Skopina:2011,Selig:1998}.
	\begin{lem}\label{lem:MSA:LU}
		For \( j\in\mathbb N \) the property \ref{MSA:Basis} is equivalent to
		\( \dim V_j = m_j \) and  the existence of a function
		\( \varphi_j\in V_j \), such that
		\begin{equation}\label{eq:MSA1:j}
			\sum_{\vect{z}\in\mathbb Z^d}
				|c_{\vect{h}+\mat{M}_j^\T\vect{z}}(\varphi_j)|^2 > 0
				\quad
				\text{holds for all }\vect{h}\in\gSet{\mat{M}_j^\T}\text{.}
		\end{equation}
	\end{lem}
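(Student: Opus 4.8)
The plan is to split property~\ref{MSA:Basis} into a linear-independence criterion and a dimension count. Since \( |\pSet{\mat{M}_j}| = m_j \), there are exactly \( m_j \) candidate translates \( \translationOp{\vect{y}}\varphi_j \), \( \vect{y}\in\pSet{\mat{M}_j} \); they constitute a basis of \( V_j \) precisely when they are linearly independent \emph{and} \( \dim V_j = m_j \). Thus the whole statement reduces to showing that, for \( \varphi_j\in V_j \), the translates are linearly independent if and only if~\eqref{eq:MSA1:j} holds, and then matching dimensions.

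First I would establish this central equivalence. For a coefficient vector \( \vect{a} = (a_{\vect{y}})_{\vect{y}\in\pSet{\mat{M}_j}} \) set \( f = \sum_{\vect{y}\in\pSet{\mat{M}_j}} a_{\vect{y}}\translationOp{\vect{y}}\varphi_j \). Applying~\eqref{eq:coeff:hata} with \( \mat{M}=\mat{M}_j \) gives \( c_{\vect{h}+\mat{M}_j^{\T}\vect{z}}(f) = \hat a_{\vect{h}}\,c_{\vect{h}+\mat{M}_j^{\T}\vect{z}}(\varphi_j) \) for all \( \vect{h}\in\gSet{\mat{M}_j^{\T}} \) and \( \vect{z}\in\mathbb Z^d \), where \( \vect{\hat a} \) is the discrete Fourier transform of \( \vect{a} \). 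As a function vanishes exactly when all its Fourier coefficients vanish, \( f=0 \) is equivalent to \( \hat a_{\vect{h}}\,c_{\vect{h}+\mat{M}_j^{\T}\vect{z}}(\varphi_j)=0 \) for every \( \vect{h} \) and \( \vect{z} \). If~\eqref{eq:MSA1:j} holds, then for each \( \vect{h} \) there is some \( \vect{z} \) with \( c_{\vect{h}+\mat{M}_j^{\T}\vect{z}}(\varphi_j)\neq 0 \), which forces \( \hat a_{\vect{h}}=0 \); hence \( \vect{\hat a}=\vect{0} \), and since the Fourier matrix is unitary and therefore the discrete Fourier transform invertible, \( \vect{a}=\vect{0} \). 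Conversely, if~\eqref{eq:MSA1:j} fails for some \( \vect{h}_0 \), then choosing \( \hat a_{\vect{h}_0}=1 \) and \( \hat a_{\vect{h}}=0 \) otherwise produces, again by invertibility of the transform, a nonzero vector \( \vect{a} \) with \( f=0 \), so the translates are linearly dependent.

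With this equivalence in hand both implications follow by counting dimensions. If~\ref{MSA:Basis} holds, the \( m_j \) translates form a basis of \( V_j \), so \( \dim V_j=m_j \) and, being in particular linearly independent, they force~\eqref{eq:MSA1:j}. For the converse, assume \( \dim V_j=m_j \) together with a function \( \varphi_j\in V_j \) satisfying~\eqref{eq:MSA1:j}: by the \( \mat{M}_j \)-invariance of \( V_j \) the translates \( \translationOp{\vect{y}}\varphi_j \) lie in \( V_j \), by the equivalence they are linearly independent, and \( m_j \) independent vectors in the \( m_j \)-dimensional space \( V_j \) must span it, hence form a basis. The step demanding the most care is the linear-independence equivalence, specifically keeping the coset decomposition of~\eqref{eq:coeff:hata} consistent and exploiting that the discrete Fourier transform is a bijection in both directions; the only place the ambient structure is needed is the use of \( \mat{M}_j \)-invariance of \( V_j \) in the converse, which guarantees that the translates of \( \varphi_j \) remain inside \( V_j \).
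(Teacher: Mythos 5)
Your proposal is correct and follows essentially the same route as the paper, which merely observes that \eqref{eq:MSA1:j} is a reformulation of \eqref{eq:coeff:hata} and then counts dimensions; you fill in the details the paper leaves implicit, namely the invertibility of the discrete Fourier transform behind the linear-independence equivalence. Your explicit remark that the converse needs the translates \( \translationOp{\vect{y}}\varphi_j \) to lie in \( V_j \) (i.e.\ \( \mat{M}_j \)-invariance) makes visible an assumption the paper's one-line proof also uses tacitly, so no further changes are needed.
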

	\begin{proof}
		Let~\ref{MSA:Basis} be given. Then for each \(j\in\mathbb N_0\) the
		equality \(\dim V_j = m_j\) holds and~\eqref{eq:MSA1:j} is a
		reformulation of \eqref{eq:coeff:hata}. For the reverse direction of the
		equivalence, the property given in~\eqref{eq:MSA1:j} together with the
		just mentioned dimension of \(V_j\) ensures, that the translates
		\( \translationOp{\vect{y}}\varphi_j\in V_j \),
		\( \vect{y}\in\pSet{\mat{M}_j} \), are a basis of \(V_j\).
	\end{proof}
	\begin{lem}\label{lem:MSA:Subsets:inck}
		Let \ref{MSA:Basis} be fulfilled for a set of regular matrices
		\( \{\mat{J}_k\}_{k>0} \) and a set of spaces \( \{V_j\}_{j\geq 0} \).
		Then, the property \ref{MSA:Subsets} holds if and only if for all
		\( j\in\mathbb N \) there exists a vector
		\( \vect{\hat a}_j = \bigl(
			\hat a_{j,\vect{h}}
		\bigr)_{\vect{h}\in\gSet{\mat{M}^\T_{j+1}}} \in \mathbb C^{m_{j+1}} \),
		such that
		\begin{equation}\label{eq:InSpaceVj}
			c_{\vect{h}+\mat{M}_{j+1}^\T\vect{z}}(\varphi_j)
			=
			\hat a_{j,\vect{h}}
			c_{\vect{h}+\mat{M}_{j+1}^\T\vect{z}}(\varphi_{j+1})
			\quad\text{holds for all }\vect{h}\in\gSet{\mat{M}_{j+1}^\T},
			\ \vect{z}\in\mathbb Z^d
			\text{.}
		\end{equation}
	\end{lem}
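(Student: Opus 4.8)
The plan is to reduce the inclusion $V_j\subset V_{j+1}$ to the single membership $\varphi_j\in V_{j+1}$, and then to read the coefficient identity \eqref{eq:InSpaceVj} directly off the transfer-coefficient characterisation \eqref{eq:coeff:hata} of functions in a space of translates. Under \ref{MSA:Basis} the spaces are $V_j=V_{\mat{M}_j}^{\,\varphi_j}$ and $V_{j+1}=V_{\mat{M}_{j+1}}^{\,\varphi_{j+1}}$, and $\mat{M}_{j+1}=\mat{J}_{j+1}\mat{M}_j$ is a factorisation of the type considered above, so that $\pSet{\mat{M}_j}\subset\pSet{\mat{M}_{j+1}}$. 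Since $V_{j+1}$ is $\mat{M}_{j+1}$-invariant, it is closed under every shift $\translationOp{\vect{y}}$ with $\vect{y}\in\pSet{\mat{M}_j}$; hence, once $\varphi_j\in V_{j+1}$, all generators $\translationOp{\vect{y}}\varphi_j$, $\vect{y}\in\pSet{\mat{M}_j}$, of $V_j$ already lie in $V_{j+1}$, while conversely $\varphi_j\in V_j\subset V_{j+1}$ is immediate. Thus $V_j\subset V_{j+1}$ holds if and only if $\varphi_j\in V_{j+1}$.

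For the forward implication I would expand $\varphi_j\in V_{j+1}=V_{\mat{M}_{j+1}}^{\,\varphi_{j+1}}$ as in \eqref{eq:coeff:a}, writing $\varphi_j=\sum_{\vect{y}\in\pSet{\mat{M}_{j+1}}}a_{\vect{y}}\translationOp{\vect{y}}\varphi_{j+1}$, and apply \eqref{eq:coeff:hata} with $\mat{M}=\mat{M}_{j+1}$, $\varphi=\varphi_{j+1}$ and $f=\varphi_j$. This yields precisely \eqref{eq:InSpaceVj}, with $\vect{\hat a}_j$ the discrete Fourier transform of $\vect{a}=(a_{\vect{y}})_{\vect{y}\in\pSet{\mat{M}_{j+1}}}$.

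For the converse I would run the same computation backwards. Given a vector $\vect{\hat a}_j\in\mathbb C^{m_{j+1}}$ satisfying \eqref{eq:InSpaceVj}, let $\vect{a}$ be the coefficient vector whose discrete Fourier transform with respect to $\mat{M}_{j+1}$ equals $\vect{\hat a}_j$, and set $\tilde\varphi:=\sum_{\vect{y}\in\pSet{\mat{M}_{j+1}}}a_{\vect{y}}\translationOp{\vect{y}}\varphi_{j+1}\in V_{j+1}$. Then \eqref{eq:coeff:hata} gives $c_{\vect{h}+\mat{M}_{j+1}^\T\vect{z}}(\tilde\varphi)=\hat a_{j,\vect{h}}\,c_{\vect{h}+\mat{M}_{j+1}^\T\vect{z}}(\varphi_{j+1})$, which by \eqref{eq:InSpaceVj} coincides with $c_{\vect{h}+\mat{M}_{j+1}^\T\vect{z}}(\varphi_j)$ for all $\vect{h}\in\gSet{\mat{M}_{j+1}^\T}$ and $\vect{z}\in\mathbb Z^d$. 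As the generating set $\gSet{\mat{M}_{j+1}^\T}$ together with the lattice $\mat{M}_{j+1}^\T\mathbb Z^d$ represents every $\vect{k}\in\mathbb Z^d$ exactly once, the functions $\tilde\varphi$ and $\varphi_j$ share all Fourier coefficients; hence $\varphi_j=\tilde\varphi\in V_{j+1}$ by uniqueness of the Fourier expansion in $L_2(\mathbb T^d)$, and the first step upgrades this to $V_j\subset V_{j+1}$.

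The conceptual crux is the reduction in the first paragraph, which lets the whole equivalence be handled generator-by-generator; the remaining technical care concerns index bookkeeping. Equation~\eqref{eq:coeff:hata} encodes the transfer coefficient $\hat a_{j,\vect{h}}$ as a function of the residue class of $\vect{h}$ modulo $\mat{M}_{j+1}^\T$, so that it is indexed by $\gSet{\mat{M}_{j+1}^\T}$, whereas the coefficient vector $\vect{a}$ is indexed by the pattern $\pSet{\mat{M}_{j+1}}$. Keeping the roles of $\mat{M}_{j+1}$ versus $\mat{M}_{j+1}^\T$ and of pattern versus generating set straight is where I expect the main obstacle to lie; no new estimate is required once the factorisation $\mat{M}_{j+1}=\mat{J}_{j+1}\mat{M}_j$ and the inclusion $\pSet{\mat{M}_j}\subset\pSet{\mat{M}_{j+1}}$ are in place.
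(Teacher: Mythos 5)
Your proof is correct and takes essentially the same route as the paper: its (very brief) proof likewise uses the inclusion \( \pSet{\mat{M}_j}\subset\pSet{\mat{M}_{j+1}} \) coming from \( \mat{M}_{j+1}=\mat{J}_{j+1}\mat{M}_j \) and identifies \eqref{eq:InSpaceVj} as the instance of \eqref{eq:coeff:hata} expressing \( \varphi_j\in V_{\mat{M}_{j+1}}^{\,\varphi_{j+1}} \). Your reduction of \( V_j\subset V_{j+1} \) to the single membership \( \varphi_j\in V_{j+1} \) via shift-invariance, and the explicit two-way application of \eqref{eq:coeff:hata}, merely spell out the details the paper leaves implicit.
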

	\begin{proof}
		Due to \( \mat{M}_{j+1} = \mat{J}_{j+1}\mat{M}_j \) we have
		\( \pSet{\mat{M}_j} \subset \pSet{\mat{M}_{j+1}} \).
		Using \ref{MSA:Basis} the property \ref{MSA:Subsets} is equivalent to
		fulfilling a statement like \eqref{eq:coeff:hata} for each
		\( j\in \mathbb N \), which is stated in \eqref{eq:InSpaceVj}.
	\end{proof}
	\begin{cor}
		For two successive scaling functions, Lemma~\ref{lem:MSA:Subsets:inck}
		implies, that for the sequences
		\(\vect{c}(\varphi_j)
		:= \{c_{\vect{k}}(\varphi_j)\}_{\vect{k}\in\mathbb Z^d}\)
		and \(\vect{c}(\varphi_{j+1})\) it holds
		\[
			\operatorname{supp}(\vect{c}(\varphi_j))
			\subseteq
			\operatorname{supp}(\vect{c}(\varphi_{j+1}))
			,
			\quad
			j\in\mathbb N_0
			\text{.}
		\]
	\end{cor}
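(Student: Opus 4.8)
The plan is to read the inclusion off directly from the pointwise two-scale identity \eqref{eq:InSpaceVj} supplied by Lemma~\ref{lem:MSA:Subsets:inck}. First I would record a bookkeeping observation: by the definition of the generating set \( \gSet{\mat{M}_{j+1}^\T} \) as a complete set of congruence representatives, every index \( \vect{k}\in\mathbb Z^d \) admits a decomposition \( \vect{k} = \vect{h} + \mat{M}_{j+1}^\T\vect{z} \) with \( \vect{h}\in\gSet{\mat{M}_{j+1}^\T} \) and \( \vect{z}\in\mathbb Z^d \), with \( \vect{h} \) determined uniquely by the congruence class of \( \vect{k} \). Consequently \eqref{eq:InSpaceVj}, although stated per representative \( \vect{h} \) and per \( \vect{z} \), in fact ranges over all of \( \mathbb Z^d \) and can be rewritten as the single pointwise relation
\[
	c_{\vect{k}}(\varphi_j) = \hat a_{j,\vect{h}}\, c_{\vect{k}}(\varphi_{j+1}),
	\qquad \vect{k}\in\mathbb Z^d,
\]
where \( \vect{h}=\vect{h}(\vect{k})\in\gSet{\mat{M}_{j+1}^\T} \) denotes the representative of \( \vect{k} \) and the scalar factor \( \hat a_{j,\vect{h}} \) depends only on that representative.

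From here the claim follows by contraposition. Suppose \( \vect{k}\notin\operatorname{supp}(\vect{c}(\varphi_{j+1})) \), i.e. \( c_{\vect{k}}(\varphi_{j+1})=0 \). Then the right-hand side of the displayed identity vanishes irrespective of the value of \( \hat a_{j,\vect{h}} \), so \( c_{\vect{k}}(\varphi_j)=0 \) and hence \( \vect{k}\notin\operatorname{supp}(\vect{c}(\varphi_j)) \). This establishes
\[
	\operatorname{supp}(\vect{c}(\varphi_j)) \subseteq \operatorname{supp}(\vect{c}(\varphi_{j+1}))
\]
for every index \( j \) at which Lemma~\ref{lem:MSA:Subsets:inck} provides the coefficient vector \( \vect{\hat a}_j \), which yields the asserted inclusion across the whole nested sequence.

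I expect essentially no substantive obstacle here: the entire content is the elementary fact that multiplication by the transfer factor \( \hat a_{j,\vect{h}} \) can only annihilate Fourier coefficients, never create new ones, so that \emph{no} nonvanishing or normalisation hypothesis on \( \hat a_{j,\vect{h}} \) is needed for this direction. The one point that I would take care to state cleanly is the index accounting of the first paragraph — namely that the congruence decomposition \( \vect{k}=\vect{h}+\mat{M}_{j+1}^\T\vect{z} \) is both exhaustive and unambiguous — since it is precisely this that promotes \eqref{eq:InSpaceVj} from a constraint on sublattice-indexed coefficients to a constraint holding at every \( \vect{k}\in\mathbb Z^d \), which is what the support statement requires.
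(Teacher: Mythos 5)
Your proof is correct and is precisely the argument the paper leaves implicit: the corollary is stated without proof as an immediate consequence of Lemma~\ref{lem:MSA:Subsets:inck}, and your reading of \eqref{eq:InSpaceVj} as the pointwise relation \( c_{\vect{k}}(\varphi_j) = \hat a_{j,\vect{h}(\vect{k})}\, c_{\vect{k}}(\varphi_{j+1}) \) via the unique decomposition \( \vect{k}=\vect{h}+\mat{M}_{j+1}^\T\vect{z} \) is exactly the intended step. Your care with the exhaustiveness and uniqueness of the congruence decomposition, and the observation that no nonvanishing hypothesis on \( \hat a_{j,\vect{h}} \) is needed, are both sound.
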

	\begin{lem}\label{lem:MR3inck}
		Let \ref{MSA:Basis} and \ref{MSA:Subsets} be fulfilled.
		The property~\ref{MSA:complete} holds if and only if
		\begin{equation}\label{eq:SuppPhijZd}
			\lim_{j\to\infty} \operatorname{supp}(\vect{c}(\varphi_j))
			= \mathbb Z^d\text{.}
		\end{equation}
	\end{lem}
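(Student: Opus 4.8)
The property \ref{MSA:complete} says that $\overline{\bigcup_{j\ge0}V_j}=L_2(\mathbb T^d)$. Since the exponentials $\E^{\I\vect{k}^\T\circ}$, $\vect{k}\in\mathbb Z^d$, form a complete orthonormal system of $L_2(\mathbb T^d)$, this is equivalent to $\E^{\I\vect{k}^\T\circ}\in\overline{\bigcup_jV_j}$ for every $\vect{k}\in\mathbb Z^d$. Because of \ref{MSA:Subsets} the spaces $V_j$ are nested, so the orthogonal projections $P_j$ onto $V_j$ increase to the orthogonal projection onto $\overline{\bigcup_jV_j}$; hence \ref{MSA:complete} holds if and only if $\lVert P_j\E^{\I\vect{k}^\T\circ}\rVert\to1$ as $j\to\infty$ for every $\vect{k}$. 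The plan is to evaluate this projection norm through the Fourier coefficients $c_{\vect{k}}(\varphi_j)$ and to read off the support condition.

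First I would make the structure of $V_j$ explicit in Fourier terms. By \eqref{eq:coeff:hata} a function $f\in V_j$ has Fourier coefficients that, on each residue class $\vect{h}+\mat{M}_j^\T\mathbb Z^d$, $\vect{h}\in\gSet{\mat{M}_j^\T}$, are one common multiple $\hat a_{\vect{h}}$ of those of $\varphi_j$; moreover exponentials whose frequencies lie in different classes are mutually orthogonal, so $L_2(\mathbb T^d)$ splits orthogonally along these classes and $V_j$ splits compatibly. The frequency $\vect{k}$ lies in a single class, $\E^{\I\vect{k}^\T\circ}$ belongs entirely to the corresponding summand, and the intersection of $V_j$ with that summand is the line spanned by the part of $\varphi_j$ carried by that class. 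Projecting onto this line and applying the Parseval equation gives
\begin{equation*}
	\lVert P_j\E^{\I\vect{k}^\T\circ}\rVert^2
	=
	\frac{\lvert c_{\vect{k}}(\varphi_j)\rvert^2}{\sum_{\vect{z}\in\mathbb Z^d}\lvert c_{\vect{k}+\mat{M}_j^\T\vect{z}}(\varphi_j)\rvert^2}\text{.}
\end{equation*}

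For the necessity of \eqref{eq:SuppPhijZd} I would argue by contraposition. The Corollary following Lemma~\ref{lem:MSA:Subsets:inck} shows that the supports $\operatorname{supp}(\vect c(\varphi_j))$ increase with $j$, so $\lim_{j\to\infty}\operatorname{supp}(\vect c(\varphi_j))=\bigcup_j\operatorname{supp}(\vect c(\varphi_j))$. If some $\vect{k}_0$ lies outside this union, then $c_{\vect{k}_0}(\varphi_j)=0$ for all $j$, whence by \eqref{eq:coeff:hata} $c_{\vect{k}_0}(f)=0$ for every $f\in V_j$ and every $j$. Thus $\E^{\I\vect{k}_0^\T\circ}$ is orthogonal to $\overline{\bigcup_jV_j}$ and cannot be approximated, so \ref{MSA:complete} fails; equivalently the projection norm above vanishes for this $\vect{k}_0$.

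The converse is where the real work lies. Assuming \eqref{eq:SuppPhijZd}, fix $\vect{k}$; by the exhaustion there is a $J$ with $c_{\vect{k}}(\varphi_j)\neq0$ for all $j\ge J$, so the projection norm is positive and, by the monotonicity of $P_j$, nondecreasing in $j$. What remains, and this is the main obstacle, is to show that the tail $\sum_{\vect{z}\neq\vect{0}}\lvert c_{\vect{k}+\mat{M}_j^\T\vect{z}}(\varphi_j)\rvert^2$ becomes negligible against $\lvert c_{\vect{k}}(\varphi_j)\rvert^2$, i.e. that the frequency $\vect{k}$ eventually dominates its residue class. Here I would combine two facts. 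First, the refinement relation \eqref{eq:InSpaceVj} freezes ratios: for $\vect{v}\in\mat{M}_{j+1}^\T\mathbb Z^d$ the frequencies $\vect{k}$ and $\vect{k}+\vect{v}$ share a class modulo $\mat{M}_{j+1}^\T$ and are scaled by the same $\hat a_{j,\vect{h}}$, so the quotient $c_{\vect{k}+\vect{v}}(\varphi_j)/c_{\vect{k}}(\varphi_j)$ is unchanged when passing from $j$ to $j+1$. Second, in the present framework each $\varphi_j$ is a trigonometric polynomial, hence has finite Fourier support. A persistent tail contribution would, through the frozen ratios, force an entire line of frequencies $\vect{k}+\mathbb Z\vect{v}$ into every $\operatorname{supp}(\vect c(\varphi_j))$, which a finite support forbids; the exhaustion \eqref{eq:SuppPhijZd} simultaneously rules out the degenerate case $\bigcap_j\mat{M}_j^\T\mathbb Z^d\neq\{\vect{0}\}$ in which such a persistent direction $\vect{v}$ could survive. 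The projection norm then tends to $1$ for every $\vect{k}$, which is \ref{MSA:complete}. I expect that quantifying precisely how the residue class thins out against the growth of the supports is the only delicate point; in the expanding case it is immediate, since the nonzero class representatives $\mat{M}_j^\T\vect{z}$ eventually leave any fixed frequency window.
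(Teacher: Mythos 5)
Your necessity direction is correct and is precisely the paper's argument (a $\vect{k}_0$ outside all supports makes $\E^{\I\vect{k}_0^\T\circ}$ orthogonal to every $V_j$), and the projection-norm formula for $\lVert P_j\E^{\I\vect{k}^\T\circ}\rVert$ is also fine. The genuine gap is in the converse, and it sits exactly where you flag the ``only delicate point'': your argument rests on the premise that ``in the present framework each $\varphi_j$ is a trigonometric polynomial,'' which is not available. The lemma is stated in the general MRA framework of Section~2; property~\ref{MSA:Basis} only forces $\dim V_j=m_j<\infty$, while by \eqref{eq:coeff:hata} the Fourier coefficients of $\varphi_j$ are constrained only up to one free factor per residue class and may be nonzero for \emph{all} $\vect{k}\in\mathbb Z^d$ (the de la Vallée Poussin functions of Section~4 are polynomials only because there $g$ has compact support — an assumption not in force here). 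Likewise your claim that \eqref{eq:SuppPhijZd} rules out $\bigcap_j\mat{M}_j^\T\mathbb Z^d\neq\{\vect{0}\}$ is unsupported and in fact false: for $\mat{J}_j=\bigl(\begin{smallmatrix}2&0\\0&1\end{smallmatrix}\bigr)$ one has $\bigcap_j\mat{M}_j^\T\mathbb Z^2=\{0\}\times\mathbb Z$, while nothing prevents $\operatorname{supp}(\vect{c}(\varphi_j))=\mathbb Z^2$ for every $j$. Your own (correct) frozen-ratio observation then works against you: for a surviving $\vect{v}\in\bigcap_j\mat{M}_j^\T\mathbb Z^d$ the quotient $c_{\vect{k}+\vect{v}}(\varphi_j)/c_{\vect{k}}(\varphi_j)$ is constant in $j$, so the projection norms stay bounded away from $1$ and your argument cannot close.

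The paper avoids proving the quantitative statement $\lVert P_j\E^{\I\vect{k}^\T\circ}\rVert\to1$ altogether and argues by duality. Assume $f\neq0$ is orthogonal to $\operatorname{clos}_{L_2(\mathbb T^d)}\bigl(\bigcup_j V_j\bigr)$, pick $\vect{k}_0$ with $\lvert c_{\vect{k}_0}(f)\rvert$ maximal, and take $j_0$ with $c_{\vect{k}_0}(\varphi_{j_0})\neq0$, which exists by \eqref{eq:SuppPhijZd}. Since $\varphi_{j_0}\in V_j$ for all $j\geq j_0$ and $V_j$ is $\mat{M}_j$-invariant, $f\perp\translationOp{\vect{y}}\varphi_{j_0}$ for all $\vect{y}\in\pSet{\mat{M}_j}$; a discrete Fourier transform over the pattern turns this into the aliasing identity
\begin{equation*}
	0=\sum_{\vect{z}\in\mathbb Z^d}
	c_{\vect{k}_0+\mat{M}_j^\T\vect{z}}(f)\,
	\overline{c_{\vect{k}_0+\mat{M}_j^\T\vect{z}}(\varphi_{j_0})},
	\quad j\geq j_0\text{.}
\end{equation*}
Splitting off the term $\vect{z}=\vect{0}$ (of modulus $\varepsilon>0$) and bounding the tail for large $j$ via Cauchy--Schwarz — $\vect{c}(f)$ and $\vect{c}(\varphi_{j_0})$ lie in $\ell_2$, so the full series is absolutely convergent and its far-out part can be made $<\varepsilon/2$ — yields the contradiction, hence $f=0$. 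Crucially, only $\ell_2$ summability of the \emph{fixed} function $\varphi_{j_0}$ is used: no finite Fourier support, no convergence of projection norms. Note that the delicate point survives even there in a mild form — pushing the tail terms $\vect{k}_0+\mat{M}_j^\T\vect{z}$, $\vect{z}\neq\vect{0}$, out of the region where the absolute series is small requires the nonzero lattice points of $\mat{M}_j^\T\mathbb Z^d$ to leave every fixed ball — but that is a far weaker input than what your route demands, and it is the minimal assumption your sketch would also have to make explicit, since, as the diagonal example shows, it does not follow from \eqref{eq:SuppPhijZd}.
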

	\begin{proof}
		The proof follows the one-dimensional approach, mentioned
		in~\citep{PlonkaTasche:1993,PlonkaTasche:1994}.

		Assume,~\eqref{eq:SuppPhijZd} is not fulfilled. Then there exists a
		vector \( \vect{k}_0\in\mathbb Z^d \), such that
		\( c_{\vect{k}_0}(\varphi_j)=0 \) is true for all \( j\in\mathbb N \).
		Using the basis \( \{\E^{\I\vect{k}^\T\circ}
		\,;\,\vect{k}\in\mathbb Z^d\}\) of the Hilbert space
		\( L_2(\mathbb T^d) \), we obtain, that the function
		\( \E^{\I\vect{k}_0^\T\circ} \) is orthogonal to all spaces \( V_j \).
		Hence, the union of these spaces is not dense in \( L_2(\mathbb T^d) \)
		and~\ref{MSA:complete} does not hold.
		
		Assume,~\eqref{eq:SuppPhijZd} is fulfilled. In order to deduce,
		that~\ref{MSA:complete} holds, assume there exists a function
		\( f\in L_2(\mathbb T^d) \), \( f\neq 0 \), such that
		\begin{equation}\label{eq:proof:f-orth}
			f\ \bot\ \operatorname{clos}_{L_2(\mathbb T^d)}
			\Biggl(\bigcup_{j\in\mathbb N_0}V_j\Biggr)
		\end{equation}
		holds. There exists \( \vect{k}_0\in\mathbb Z^d \) with
		\[
			|c_{\vect{k}_0}(f)|
			= \max \{ |c_{\vect{k}}(f)|\ ;\ \vect{k}\in\mathbb Z^d\} > 0\text{.}
		\]
		Using \eqref{eq:SuppPhijZd} there exists a \( j_0\in\mathbb N_0 \), such
		that \( \vect{k}_0 \in \operatorname{supp}(\vect{c}(\varphi_{j_0})) \).
		Furthermore using~\ref{MSA:Subsets}, we have
		\( \varphi_{j_0} \in V_{j}\), for each \(j\geq j_0 \) and hence
		\( V_{\mat{M}_{j_0}}^{\varphi_{j_0}}\subset V_j \).
		Due to \eqref{eq:proof:f-orth} we have \( f \bot V_j \) and especially
		\( f\bot V_{\mat{M}_{j_0}}^{\varphi_{j_0}} \). Using the discrete
		Fourier transform of the  coefficients \( a_{\vect{y}} \) characterizing
		the projection of \( f \) into \( V_{\mat{M}_{j}}^{\varphi_{j}} \), we
		obtain for each \( j\geq j_0\), \( \vect{h}\in\gSet{\mat{M}_j^\T} \),
		that
		\begin{equation*}
			0 = \sum_{\vect{y}\in\pSet{\mat{M}_j}}
				\langle f,\translationOp{\vect{y}}\varphi_{j_0}\rangle
				\E^{-2\pi\I\vect{h}^\T\vect{y}}
			  = m_j\sum_{\vect{z}\in\mathbb Z^d}
				c_{\vect{h}+\mat{M}_j^\T\vect{z}}(f)
				\overline{c_{\vect{h}+\mat{M}_j^\T\vect{z}}(\varphi_{j_0})}
				\text{.}
		\end{equation*}	
		Looking at the congruence classes, we see, that we have a unique vector
		\( \vect{h}\in\gSet{\mat{M}_j^\T} \) fulfilling
		\( \vect{h}\equiv\vect{k}_0\bmod\mat{M}_j^\T\) for any \(j\geq j_0 \). 		The corresponding sum can be written as
		\begin{equation}\label{eq:proof:M3inck:splitsum}
			0 = c_{\vect{k}_0}(f)\overline{c_{\vect{k}_0}(\varphi_{j_0})}
			+ \sum_{\vect{z}\in\mathbb Z^d\backslash\{\vect{0}\}}
				c_{\vect{k}_0+\mat{M}_j^\T\vect{z}}(f)
				\overline{c_{\vect{k}_0+\mat{M}_j^\T\vect{z}}(\varphi_{j_0})}
				,\quad j\geq j_0\text{.}
	\end{equation}
		Let \( \varepsilon := |c_{\vect{k}_0}(f)
		\overline{c_{\vect{k}_0}(\varphi_{j_0})}| > 0 \). Applying the Cauchy
		Schwarz inequality to the series of the absolute values of the Fourier
		coefficients, yields
		\begin{equation*}
			\sum_{\vect{z}\in\mathbb Z^d}
				|c_{\vect{z}}(f)\overline{c_{\vect{z}}(\varphi_{j_0})}|
				\leq \|\vect{c}(f)\ |\ \ell_2\|
				\cdot\|\vect{c}(\varphi_{j_0})\ |\ \ell_2\| < \infty\text{.}
		\end{equation*}
		Hence, there exists a \( j_1 \geq j_0 \), such that
		\begin{equation*}
			\sum_{\vect{z}\in\mathbb Z^d\backslash\gSet{\mat{M}_{j_1}^\T}}
				|c_{\vect{z}}(f)\overline{c_{\vect{z}}(\varphi_{j_0})}|
			< \varepsilon/2\text{,}
	\end{equation*}
		which yields a contradiction, choosing \( j=j_1 \)
		in~\eqref{eq:proof:M3inck:splitsum} and hence \( f=0 \). This implies,
		that~\ref{MSA:complete} follows from~\eqref{eq:SuppPhijZd} and completes
		the proof.
	\end{proof}
\section{De la Vallée Poussin means}\label{sec:dlVP1D}
	In \cite{PrestinSelig:1998,Selig:1998} the de la Vallée Poussin means
	\(\varphi_N^T\) were used to generate an one-dimensional MRA. They are given
	by their Fourier coefficients
	\begin{equation}\label{eq:1DdlVP}
		\varphi_N^T = \frac{1}{\sqrt{N}}\sum_{k\in\mathbb Z}
			c_k\E^{\I k\circ},
		\quad\text{where}\quad c_k =
		\begin{cases}
			1 &\mbox{ if } |k| < \frac{N}{2}-T\text{,}\\
			\frac{1}{2T}\bigl(\frac{N}{2}+T-|k|\bigr)
				&\mbox{ if } \frac{N}{2}-T \leq |k| \leq \frac{N}{2}+T\text{,}\\
			 0 &\mbox{ if } \frac{N}{2}+T < |k|\text{,}
		\end{cases}
	\end{equation}
	where \( N\in2\mathbb N \) is even and \(\tfrac{N}{2}\geq T\in\mathbb N\).
	For \( M=2N \) the two de la Vallée Poussin means \( \varphi_N^T \) and
	\( \varphi_M^S \), \( S \leq N \), provide two nested spaces of shifts,
	i.e.\ \( V_N^{\varphi_N^T} \subset V_M^{\varphi_M^S} \), if and only if
	\(S+T\leq \frac{N}{2} \), cf.~\cite[Theorem~4.1.3]{Selig:1998}.
\begin{figure}[tbp]
	\centering
	\includegraphics{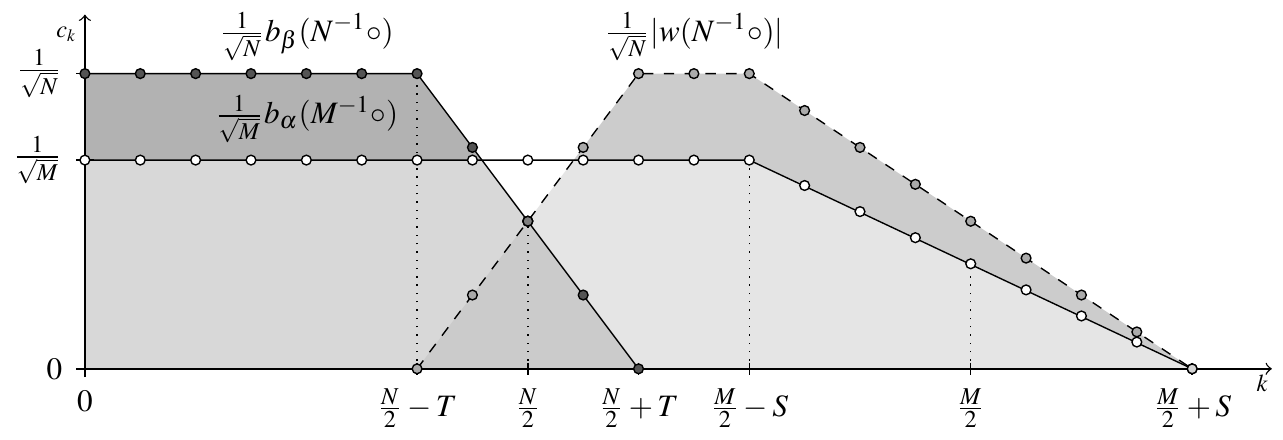}
	\caption[]{The Fourier coefficients with nonnegative \(k\in\mathbb N\) of the functions \(\varphi_N^T\) (dark), \(\varphi_M^S\) (white) and their corresponding wavelet \(\psi\) (gray, in absolute value) are shown for \( M=32=2N\), \(S=4=2T \). The corresponding scaled functions \(b_\alpha, b_\beta\) and \(\lvert w\rvert\) are also plotted.}\label{fig:dLVP1D}
\end{figure}
	They can also be obtained by using a scaled version of a linear
	spline, i.e.\ 
	\begin{align*}
		b_{\alpha}(x) &= \begin{cases}
			1 &\mbox{ if } |x| < \frac{1}{2}-\alpha\text{,}\\
			\frac{1}{2\alpha}\bigl(\frac{1}{2}+\alpha-|x|\bigr)
			&\mbox{ if } \frac{1}{2}-\alpha \leq |x|
				\leq \frac{1}{2}+\alpha\text{,}\\
			0 &\mbox{ if } \frac{1}{2}+\alpha  < |x|\text{,}
		\end{cases}
		\quad
		\text{ where } 0 <\alpha\leq \frac{1}{2}\text{,}
		\intertext{by defining}
		c_k(\varphi_{N}^\T)
		&=\tfrac{1}{\sqrt{N}}b_{\tfrac{T}{N}}(\tfrac{k}{N}),
		\quad k\in\mathbb Z\text{.}
	\end{align*}
	While the value \( T \) provides an absolute number of coefficients that
	decay, \( \alpha \) describes this as a relative part of \( N \).
	Let \( b_0 \) denote the limit \(b_0 = \lim_{\alpha\to0}b_{\alpha}\).
	Then, both extremal cases provide known functions, i.e. the modified
	Dirichlet kernel for \(b_0\) and the Féjer kernel for \(b_{\frac{1}{2}}\).
	In this notation the requirement for the spaces to be nested reads
	\( \frac{2S}{M}+\frac{T}{N} \leq \frac{1}{2} \), which can also be
	reformulated as \(2\alpha+\beta \leq \frac{1}{2}\) by setting
	\(\alpha := \frac{S}{M}\) and \(\beta := \frac{T}{N}\).
	Then we can provide the function
	\begin{equation*}
		w(x) = w_{\alpha,\beta}(x)
		:= \E^{-\frac{\I x\pi}{4}}
			\Bigr(b_{\alpha}\bigl(\tfrac{x}{2}\bigr)-b_{\beta}(x)\!\Bigr)
		= \E^{-\frac{\I x\pi}{4}}
			\biggl(\sum_{z\in\mathbb Z}
			b_{\beta}(x+1 + 2z)\!
			\biggr)
			b_{\alpha}\bigl(\tfrac{x}{2}\bigr)
	\end{equation*}
	that gives rise to a definition of the corresponding wavelet
	\( \psi = \psi_{M,N}^{S,T} \). The equality of both formulas for \(w(x)\) is a direct consequence from \citep[Theorem 4.2.1]{Selig:1998}.
	For \( \alpha=\beta \) this is similar to the function used in the Remark on
	page 29 in \citep{MhaskarPrestin:2000}. The wavelet \( \psi \) is given by
	sampling the function \(w\) on the points \( \frac{k}{N} \), i.e.
	\[
		c_{k}(\psi) = \tfrac{1}{\sqrt{N}}w_{\frac{S}{M},\frac{T}{N}}(N^{-1}k),
		\quad k\in\mathbb Z\text{.}
	\]
	Figure~\ref{fig:dLVP1D} illustrates the whole construction, where due to symmetry, we omitted the negative axis \(k<0\). We took \(M=32=2N\) and \(S=4=2T\), hence \(\alpha = \beta = \frac{1}{8}\). In order to illustrate \(w\) and the complex valued Fourier coefficients of the wavelet \(\psi\),, it's absolute value is used in the Figure.
	
	The inequality \( \frac{2S}{M}+\frac{T}{N} \leq \frac{1}{2} \) can also be seen in terms of the frequencies which the wavelet has to
	reproduce exactly. The inequality ensures, that at least
	\(
		\exp\bigl(\I\bigl(\frac{M+N}{4}+\frac{T-S}{2}\bigr)\circ\bigr)
		\in V_N^\psi
	\)\!.

	The advantage of this notation with a relative decay
	\( \frac{T}{N}=\alpha \) is that it is independent of the chosen
	\( N \). In the inequality \(2\alpha+\beta \leq \frac{1}{2}\)
	the factor of dilation can be seen in the first relative factor. This can
	easily be generalized to the multivariate functions on the torus
	\( \mathbb T^d \) using
	\begin{equation}\label{eq:TPLinearDecay}
		B_{\vect{\alpha}}(\vect{x})
		:=
		\prod_{j=1}^d b_{\alpha_j}(x_j)
		, \quad\vect{\alpha} = (\alpha_1,\ldots,\alpha_d)^\T\in
		\bigl[0,\tfrac{1}{2}\bigr]^d
		,\ \vect{x}\in\mathbb R^d
		\text{,}
	\end{equation}
	to define for any regular matrix \( \mat{M}\in\mathbb Z^{d\times d} \) a
	de la Vallée Poussin mean scaling function
	\( \varphi_{\mat{M}}^{\vect{\alpha}} \) via its Fourier coefficients
	\[
		c_{\vect{k}}(\varphi_{\mat{M}}^{\vect{\alpha}})
		:= \frac{1}{\sqrt{|\det{\mat{M}}|}}
		B_{\vect{\alpha}}(\mat{M}^{-\T}\vect{k}),
		\quad \vect{k}\in\mathbb Z^d\text{.}
	\]
	Any of the congruence class computations in the argument of
	\( B_{\alpha} \), i.e. decompositions like
	\( \vect{k}=\vect{h}+\mat{M}^\T\vect{z} \), where
	\( \vect{h}\in\gSet{\mat{M}^T} \), \( \vect{z}\in\mathbb Z^d \), can be
	performed by using the pattern \( \pSet{\mat{M}^T} \) and the congruence
	\(\bmod \mathbf{1}\) in each dimension.

	For \( \vect{\alpha}=\vect{0} \) we obtain the Dirichlet kernels
	\( D_{\mat{M}}^{\text{re}} \) from~\citep[Eq.~(34)]{LangemannPrestin:2010}.
	They form a dyadic MRA using the scaling matrices
	\[
		\mat{J}_k\in
		\{\mat{J}_{\text{D}},\mat{J}_{\text{X}},\mat{J}_{\text{Y}}\}
		:= \Biggl\{
		\begin{pmatrix} 1&-1\\1&1 \end{pmatrix},
		\begin{pmatrix} 2&0\\0&1 \end{pmatrix},
		\begin{pmatrix} 1&0\\0&2 \end{pmatrix}
		\Biggr\}
		\text{.}
		\]
	For \( \vect{\alpha}\neq \vect{0} \), this construction does not lead to
	an MRA in general.
	\begin{figure}[tbp]
		\begin{subfigure}[b]{.49\textwidth}\centering
			\includegraphics{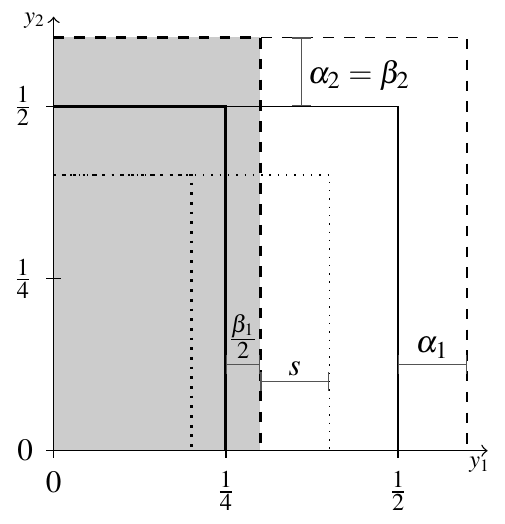}
			\caption{\( \mat{J}_X \)}\label{subfig:Balphabeta:Jx}
		\end{subfigure}
		\begin{subfigure}[b]{.49\textwidth}\centering
			\includegraphics{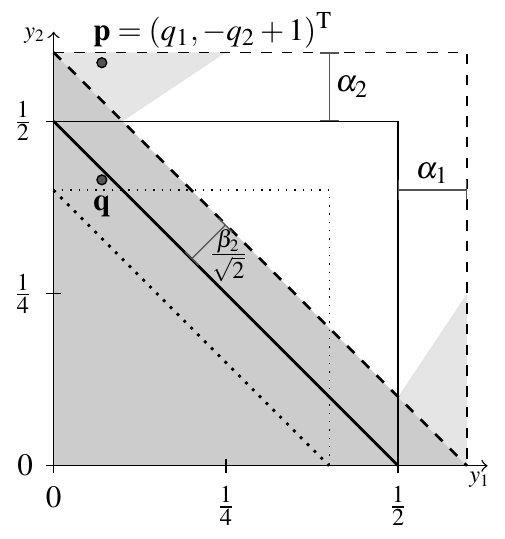}
		\caption{\( \mat{J}_D \)}\label{subfig:Balphabeta:Jd}
	\end{subfigure}
	\caption[]{Two cases of functions \( B_{\vect{\alpha}} \) (white) and
	\( B_{\vect{\beta}}(\mat{J}^\T\circ) \) (dark) for the
	matrices~\subref{subfig:Balphabeta:Jx} \( \mat{J}_X \)
	and~\subref{subfig:Balphabeta:Jd} \( \mat{J}_D \), each having
	\(\vect{\alpha}=\vect{\beta}=\frac{1}{10}(1,1)^\T\). The light area is the
	part, where frequencies of both kernels create the contradiction,
	e.g.\ looking at the points \( \vect{p} \) and \( \vect{q} \).}
	\label{fig:Balphabeta}
	\end{figure}
	As an example let us fix \( d=2 \) and
	\( \vect{\alpha}=\vect{\beta}=\tfrac{1}{10}(1,1)^\T \).
	Figure~\ref{fig:Balphabeta} illustrates the two cases
	\( B_{\vect{\beta}}(\mat{J}_{\text{X}}\vect{x}) \) and
	\( B_{\vect{\beta}}(\mat{J}_{\text{D}}\vect{x}) \) in comparison to
	\( B_{\vect{\alpha}}(\vect{x}) \). For simplification and due to symmetry,
	both figures are restricted to the first quadrant, where the dashed line
	delimits the support of \( B_{\vect{\alpha}} \) or the shaded support of
	\( B_{\vect{\beta}} \) and the dotted line marks the inner plateau. The
	first case using \( \mat{J}_{\text{X}} \) is similar to the one-dimensional
	case, because
	\( \beta_2=\alpha_2 \) and \( 2\alpha_1+\beta_1 = \tfrac{3}{10}
	\leq \tfrac{1}{2} \).

	In the second case, cf. Figure~\ref{subfig:Balphabeta:Jd}, we have for any
	point \( \vect{q} \), \( 0< q_1 < 2\alpha_1 \),
	\( \tfrac{1}{2} < q_2 < \tfrac{1}{2}+\alpha_2-|\alpha_1-q_1|\) that
	\( B_{\vect{\alpha}}(\vect{q})\neq 0 \) and
	\( B_{\vect{\beta}}(\mat{J}_{\text{D}}\vect{q}) \neq 0 \). In order to
	fulfill~\ref{MSA:Subsets}, cf. Lemma~\ref{lem:MSA:Subsets:inck}, these
	coefficients \( \hat{a}_{\vect{h}} \) are in this illustration --- due to
	multiplication with \( \mat{M}^{-\T} \) --- given by sampling a
	\( 1 \)-periodic function. For the point \( \vect{q} + (0,1)^\T  \) we have
	by symmetry, \( B_{\vect{\alpha}}(\vect{q}+(0,1)^\T)
	=B_{\vect{\alpha}}(\vect{p})\neq 0 \) and
	\( B_{\vect{\beta}}(\mat{J}_{\text{D}}(\vect{q}+(0,1)^\T)
	= B_{\vect{\beta}}(\mat{J}_{\text{D}}\vect{p})=0 \). This holds for any of
	the mentioned points \( \vect{q} \) and yields a contradiction
	to~\ref{MSA:Subsets}, if only the absolute value of the determinant of
	\( \mat{M} \) is large enough, such that any point
	\( \mat{M}^{-\T}\vect{k}\), \(\vect{k}\in\gSet{\mat{M}^\T} \),
	corresponds to one point \( \vect{q} \).
\section{Scaling functions of de la Vallée Poussin type}\label{sec:dlVP}
	This section is devoted to a construction of scaling functions and in the
	dyadic case their wavelets having an arbitrary smooth decay in their Fourier
	coefficients.
	\begin{definition}\label{def:admissible}
		We call a function \( g: \mathbb R^d\to\mathbb R \) \emph{admissible} if
		the following two properties are fulfilled
		\begin{enumerate}[label=F\arabic*)]
			\item\label{prop:g:pos} \( g(\vect{x}) \geq 0 \),
			for all \(\vect{x}\in\operatorname{supp} g\subset\mathbb R^d\),
			and \( g(\vect{x})>0\), \(\vect{x}\in\mathcal Q_d \),
			\item\label{prop:g:sum}\( \sum\limits_{\vect{z}\in\mathbb Z^d}
				g(\vect{x}+\vect{z}) = 1\), for all \(\vect{x}\in\mathbb R^d \).
		\end{enumerate}
	\end{definition}
	A special class of admissible functions are the centered box splines
	\( M_{\Xi}^c \), cf.~\citep[Chapter I]{deBoorHoelligRiemenscheider:1993},
	where the \( d\)-dimensional unit matrix \( \mat{E}_d \) is a subset of the
	vectors in \( \Xi \). The function \( B_{\vect{\alpha}} \)
	from~\eqref{eq:TPLinearDecay} is a special case of the centered box splines,
	more precisely with \( \Xi =
	\bigl(
		\vect{e}_1,\ldots,\vect{e}_d,\alpha_1\vect{e}_1,
		\ldots,\alpha_d\vect{e}_d
	\bigr) \), where \(\vect{e}_j\) denotes the \(j\)th unit vector.
	Of course, an admissible function \( g \) can also be chosen, such that it
	is arbitrarily smooth, e.g. by adding the last \( d \) vectors multiple
	times to the matrix \( \Xi \). Introducing the possibility to choose a smooth function \(g\) follows the idea, that sampling a smooth function to obtain the Fourier coefficients yields a certain localization, which was used in the one-dimensional case for example in~\cite{MhaskarPrestin:2000}.

	For a regular matrix \( \mat{J}\in\mathbb Z^{d\times d} \), a function
	\( f:\mathbb R^d \to \mathbb C \) is called summable with respect to
	\( \mat{J}\), if for all \( \vect{x}\in\mathbb R^d \)
	\begin{equation*}
		f^{\mat{J}}(\vect{x})
		:= \sum_{\vect{z}\in\mathbb Z^d}f(\vect{x}+\mat{J}^\T\vect{z})
		< \infty\text{.}
	\end{equation*}
	Any admissible function \(g\) is summable with respect to \(\mat{J}\).
	
	For any two functions \( f_1,f_2: \mathbb R^d\to\mathbb C \) and a regular
	matrix \( \mat{J}\in\mathbb Z^{d\times d} \), where \( f_1 \) is summable
	with respect to \( \mat{J} \), we define the operator
	\begin{equation*}
		\sumJMaskOp{f_1}{f_2}{\mat{J}}
		:= f_1^{\mat{J}}(\circ)
		f_2(\mat{J}^{-\T}\circ)
		= \Biggr(\sum_{\vect{z}\in\mathbb Z^d}
			f_1(\circ-\mat{J}^\T\vect{z})\Biggr)f_2(\mat{J}^{-\T}\circ) 
	\end{equation*}
	and for a vector of regular integral matrices
	\(\mat{J}_1,\ldots,\mat{J}_n\in\mathbb Z^{d\times d}\),
	\(n\in\mathbb N \), we define the matrix vectors
	\[
		 \dMatVec{l,k} := 
		 \begin{cases}
			 \bigl(
			 \mat{J}_j
			 \bigr)_{j=l}^k \in \bigl(\mathbb Z^{d\times d}\bigr)^{k-l+1}
			 &\mbox{ if } 1\leq l\leq k \leq n\text{,}\\
			 \emptyset&\mbox{else,}
		\end{cases}
	\]
	where we restrict the set of matrices by writing
	\( \dMatVec{l,k}\in \mathcal X^{k-l+1} \) for matrices \( \mat{J}_j\),
	\(j=l,\ldots,k \) from the set \( \mathcal X \). A matrix vector
	\(\dMatVec{l,k} \) only consisting of matrices having
	\(|\det\mat{J}_j|=2 \), \( j=1,\ldots,n \), is called dyadic.

	For a matrix vector \( \dMatVec{1,n} \) the functions
	\( B_{\!\!\dMatVec{l,k}}\), \(k\in\{1,\ldots,n\} \) are recursively defined
	by
	\[
		B_{\!\!\dMatVec{l,k}} := \begin{cases}
			g&\mbox{ if } l \geq k+1\text{,}\\
			\sumJMaskOp[\big]{g}{B_{\!\!\dMatVec{l+1,k}}}{\mat{J}_l}
			&\mbox{ if } l = k, k-1,\ldots,1\text{,}
		\end{cases}
	\]
	where \( g: \mathbb R^d\to \mathbb C\) is an admissible function.
	\begin{definition}\label{def:dlVP}
		Let \( \mat{M}_0 \in \mathbb Z^{d\times d} \) be a regular matrix,
		\( m_0 = |\det\mat{M}_0| > 0 \), and let
		\( \dMatVec{1,n}\in \bigl(\mathbb Z^{d\times d}\bigr)^n \),
		\( n\in \mathbb N \), be a vector of regular matrices \( \mat{J}_l \),
		\( l=1,\ldots,n \). We further denote for each
		\( l=1,\ldots,n \), that
		\( \mat{M}_l := \mat{J}_l\cdot\ldots\cdot\mat{J}_1\mat{M}_0 \) and
		\( m_l := |\det\mat{M}_l| \).

		The functions \( \varphi_{\mat{M}_l}^{\dMatVec{l+1,n}} \),
		\( l=0,\ldots,n\), which are defined by their Fourier coefficients
		\begin{align*}
			c_{\vect{k}}\bigl(\varphi_{\mat{M}_l}^{\dMatVec{l+1,n}}\bigr)
			&:= 			\frac{1}{\sqrt{m_l}}B_{\!\!\dMatVec{l+1,n}}(\mat{M}_l^{-\T}\vect{k}),
			\quad \vect{k}\in\mathbb Z^{d\times d},\ l=0,\ldots,n\text{,}
		\end{align*}
		are called scaling functions of de la Vallée Poussin type.
	\end{definition}
	We further introduce the corresponding spaces, which we denote by
	\[
		V_{\mat{M}_l}^{\dMatVec{l+1,n}}
		:= \operatorname{span}\Bigl\{
		\translationOp{\vect{y}}
		\varphi_{\mat{M}_{l}}^{\dMatVec{l+1,n}}
		\ ;\ 
		\vect{y}\in\pSet{\mat{M}_{l}}
		\Bigr\}
		,\quad
		l\in\{0,\ldots,n\}\text{.}
	\]
	Let \( C^r(\mathbb T^d) \), \( r\in\mathbb N \), denote the space of
	functions \( f:\mathbb T^d\to\mathbb C \) whose \( r \)th directional
	derivatives are continuous.
	\begin{thm}\label{thm:dlVP}
	Let the scaling functions of de la Vallée Poussin type
	\( \varphi_{\mat{M}_l}^{\dMatVec{l+1,n}}\), \(l=0,\ldots,n\),
	\( n\in\mathbb N \), be given as in Definition~\ref{def:dlVP}.
	Then the following statements hold
	\begin{enumerate}[label=\alph*)]
		\item\label{thm:dlVP:subSpace} The spaces
		\( V_{\mat{M}_l}^{\dMatVec{l+1,n}} \) are nested, i.e.\ 
		\[
			\varphi_{\mat{M}_l}^{\dMatVec{l+1,n}} \in 
			V_{\mat{M}_{l+1}}^{\dMatVec{l+2,n}}
			\quad\text{ holds for } l=0,\ldots,n-1\text{.}
		\]
		\item\label{thm:dlVP:lu} For each \( l\in\{0,\ldots,n\} \), the shifts
		\(
			\translationOp{\vect{y}}\varphi_{\mat{M}_{l}}^{\dMatVec{l+1,n}}
		\), 
		\(
			\vect{y}\in\pSet{\mat{M}_l}
		\),
		are linearly independent.
		\item\label{thm:dlVP:Sampling} Let \( g\in C^{r}(\mathbb R^d), r\in\mathbb N \). Then, for \( l\in\{0,\ldots,n\} \) it holds
		\( B_{\!\!\dMatVec{l+1,n}}(\mat{M}_l^{-\T}\circ)
			\in C^r(\mathbb R^d)\text{.} \) 
		\end{enumerate}
	\end{thm}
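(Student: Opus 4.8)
The plan is to reduce all three claims to properties of the recursively defined $B_{\dMatVec{l+1,n}}$, built from the single recursion step $B_{\dMatVec{l+1,n}} = g^{\mat{J}_{l+1}}(\circ)\,B_{\dMatVec{l+2,n}}(\mat{J}_{l+1}^{-\T}\circ)$ together with the identity $\mat{M}_{l+1}^{-\T}=\mat{J}_{l+1}^{-\T}\mat{M}_l^{-\T}$ that follows from $\mat{M}_{l+1}=\mat{J}_{l+1}\mat{M}_l$. Inserting the argument $\mat{M}_l^{-\T}\vect{k}$ into the recursion gives the factorisation $B_{\dMatVec{l+1,n}}(\mat{M}_l^{-\T}\vect{k})=g^{\mat{J}_{l+1}}(\mat{M}_l^{-\T}\vect{k})\,B_{\dMatVec{l+2,n}}(\mat{M}_{l+1}^{-\T}\vect{k})$, which is the backbone for all of \ref{thm:dlVP:subSpace}--\ref{thm:dlVP:Sampling}.

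For \ref{thm:dlVP:subSpace} I would translate this factorisation into Fourier coefficients, $c_{\vect{k}}(\varphi_{\mat{M}_l}^{\dMatVec{l+1,n}})=\sqrt{m_{l+1}/m_l}\,g^{\mat{J}_{l+1}}(\mat{M}_l^{-\T}\vect{k})\,c_{\vect{k}}(\varphi_{\mat{M}_{l+1}}^{\dMatVec{l+2,n}})$, and check that the prefactor depends only on the residue class $\vect{k}\bmod\mat{M}_{l+1}^\T$. This is the decisive step: replacing $\vect{k}$ by $\vect{k}+\mat{M}_{l+1}^\T\vect{z}$ shifts the argument of $g^{\mat{J}_{l+1}}$ by $\mat{M}_l^{-\T}\mat{M}_{l+1}^\T\vect{z}=\mat{J}_{l+1}^\T\vect{z}$, and the periodisation $g^{\mat{J}_{l+1}}$ is $\mat{J}_{l+1}^\T\mathbb Z^d$-periodic by construction. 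Hence the prefactor yields well-defined two-scale coefficients $\hat a_{l,\vect{h}}$, so that $\varphi_{\mat{M}_l}^{\dMatVec{l+1,n}}\in V_{\mat{M}_{l+1}}^{\dMatVec{l+2,n}}$ by the characterisation \eqref{eq:coeff:hata} (cf.\ Lemma~\ref{lem:MSA:Subsets:inck}).

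For \ref{thm:dlVP:lu} the key auxiliary statement is that every $B_{\dMatVec{l+1,n}}$ is again admissible, which I would establish by downward induction on $l$ from the recursion. The induction step for \ref{prop:g:sum} is the main computation: writing the summation index as $\vect{z}=\vect{g}+\mat{J}_{l+1}^\T\vect{w}$ along cosets of $\mat{J}_{l+1}^\T\mathbb Z^d$, the $\mat{J}_{l+1}^\T$-periodicity of $g^{\mat{J}_{l+1}}$ and the inductive partition of unity for $B_{\dMatVec{l+2,n}}$ collapse the double sum to $\sum_{\vect{g}}g^{\mat{J}_{l+1}}(\circ+\vect{g})=\sum_{\vect{z}\in\mathbb Z^d}g(\circ+\vect{z})=1$; nonnegativity \ref{prop:g:pos} passes through the product at once. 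Evaluating the resulting integer periodisation at $\mat{M}_l^{-\T}\vect{h}$ then shows $\sum_{\vect{z}}\lvert c_{\vect{h}+\mat{M}_l^\T\vect{z}}(\varphi_{\mat{M}_l}^{\dMatVec{l+1,n}})\rvert^2=\tfrac{1}{m_l}\sum_{\vect{z}}\lvert B_{\dMatVec{l+1,n}}(\mat{M}_l^{-\T}\vect{h}+\vect{z})\rvert^2>0$, so \eqref{eq:MSA1:j} holds and the linear independence follows from Lemma~\ref{lem:MSA:LU}.

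For \ref{thm:dlVP:Sampling} I would once more use downward induction, relying on the fact that $C^r(\mathbb R^d)$ is stable under the three operations occurring in the recursion --- periodisation $f\mapsto f^{\mat{J}}$, precomposition with an invertible linear map, and pointwise multiplication --- with base case $B_{\dMatVec{n+1,n}}=g\in C^r$; precomposing the outcome with $\mat{M}_l^{-\T}$ preserves $C^r$ by the chain rule. The single point requiring genuine care, and the main obstacle of the whole theorem, is the smoothness of the periodisation $g^{\mat{J}}$: one must guarantee that its defining series together with all partial derivatives up to order $r$ converge locally uniformly. For box splines and any compactly supported admissible $g$ this is immediate, since the series is locally finite; for general admissible $g$ one would supplement the argument with suitable decay of $g$ and its derivatives.
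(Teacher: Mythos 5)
Your proposal is correct, and parts \ref{thm:dlVP:subSpace} and \ref{thm:dlVP:Sampling} essentially coincide with the paper's proof: the two-scale coefficients you extract, \( \sqrt{m_{l+1}/m_l}\,g^{\mat{J}_{l+1}}(\mat{M}_l^{-\T}\vect{k}) = \sqrt{\lvert\det\mat{J}_{l+1}\rvert}\,g^{\mat{J}_{l+1}}(\mat{M}_l^{-\T}\vect{k})\), are exactly the paper's \( \hat a_{l,\vect{h}} \) from~\eqref{eq:twoscale}, with the same use of \( \mat{M}_l^{-\T}\mat{M}_{l+1}^\T = \mat{J}_{l+1}^\T \) and the \( \mat{J}_{l+1}^\T \)-periodicity of \( g^{\mat{J}_{l+1}} \), followed by~\eqref{eq:coeff:hata}. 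Where you genuinely depart from the paper is part \ref{thm:dlVP:lu}. The paper proves, by downward induction, the pointwise statement \( B_{\!\!\dMatVec{l+1,n}}(\vect{x})>0 \) for \( \vect{x}\in\mathcal Q_d \), so that \( c_{\vect{h}}(\varphi_{\mat{M}_l}^{\dMatVec{l+1,n}})\neq 0 \) for every \( \vect{h}\in\gSet[S]{\mat{M}_l^\T} \), and then cites \citep[Corollary 3.5]{LangemannPrestin:2010}; you instead prove that admissibility itself (in particular the partition-of-unity property \ref{prop:g:sum}) is preserved by the recursion, via the coset splitting \( \vect{z}=\vect{g}+\mat{J}_{l+1}^\T\vect{w} \), and deduce per-congruence-class positivity \( \sum_{\vect{z}}\lvert c_{\vect{h}+\mat{M}_l^\T\vect{z}}\rvert^2>0 \) from \( B\geq 0 \) and \( \sum_{\vect{z}}B(\circ+\vect{z})=1 \). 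Your route buys a cleaner and fully rigorous induction step --- the paper's argument for positivity when \( \mat{J}_{l+1}^{-\T}\vect{x}\notin\mathcal Q_d \) is rather opaque as written --- and the preservation-of-admissibility lemma is independently useful (it is implicit in, e.g., Lemma~\ref{lem:SpezialDirichlet}); the paper's route buys the stronger pointwise fact that \emph{every} coefficient on the symmetric generating set is nonzero, which matches the plateau picture exploited later. Two minor remarks: for the final step, \citep[Corollary 3.5]{LangemannPrestin:2010} is the cleaner citation than Lemma~\ref{lem:MSA:LU}, since the spaces are defined as spans and one wants linear independence of the \( m_l \) translates, though the content is the same; and your caveat in \ref{thm:dlVP:Sampling} about locally uniform convergence of the periodization and its derivatives is well taken --- the paper's Definition~\ref{def:admissible} does not formally force compact support, and the paper silently relies on it (the local finiteness of \( g^{\mat{J}} \)), as its conclusion section confirms by describing admissibility as \enquote{having compact support and being a partition of unity}.
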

	\begin{proof}
		\begin{enumerate}[label=\alph*)]
			\item Let \( l\in\{0,\ldots,n-1\} \) be given. We define
			\begin{equation}\label{eq:twoscale}
				 \hat a_{l,\vect{h}} := \sqrt{|\det\mat{J}_{l+1}|}g^{\mat{J}_{l+1}}(\mat{M}_{l}^{-\T}\vect{h}),\quad \vect{h}\in\gSet{\mat{M}^\T_{l+1}}
			\end{equation}
			and use the unique decomposition of any \( \vect{k}\in\mathbb Z^d \) into \( \vect{k}=\vect{h}+\mat{M}_{l+1}^\T\vect{z}\), \( \vect{h}\in\gSet{\mat{M}_{l+1}^\T}\), \( \vect{z}\in\mathbb Z^d \), to obtain
		\begin{equation*}
			\begin{split}
				c_{\vect{h}+\mat{M}_{l+1}^\T\vect{z}}\bigl(\varphi_{\mat{M}_l}^{\dMatVec{l+1,n}}\bigr)
				&= \frac{1}{\sqrt{m_j}}B_{\!\!\dMatVec{l+1,n}}\Bigl(\mat{M}_{l}^{-\T}\bigl(\vect{h}+\mat{M}_{l+1}^\T\vect{z}\bigl)\Bigl)\\
				&= \frac{1}{\sqrt{m_j}}\sumJMaskOp{g}{B_{\!\!\dMatVec{l+2,n}}}{\mat{J}_{l+1}}\bigl(\mat{M}_l^{-\T}\vect{h}+\mat{J}_{l+1}^\T\vect{z}\bigr)\\
				&= \hat a_{l,\vect{h}}
					\frac{1}{\sqrt{m_{l+1}}}
					B_{\!\!\dMatVec{l+2,n}}\bigl(\mat{M}_{l+1}^{-\T}\vect{h}+\vect{z}\bigr)\\
				&= \hat a_{l,\vect{h}} c_{\vect{h}+\mat{M}_{l+1}^\T\vect{z}}\bigl(\varphi_{\mat{M}_{l+1}}^{\dMatVec{l+2,n}}\bigr)
				\text{.}
			\end{split}
		\end{equation*}
		Hence the statement follows with~\eqref{eq:coeff:hata}.
		%
		%
		\item For \( \vect{h}\in\gSet[S]{\mat{M}_l^\T} \) and \( l\in\{0,\ldots,n\} \) it holds \( c_{\vect{h}}(\varphi_{\mat{M}_l}^{\dMatVec{l+1,n}}) = \frac{1}{\sqrt{m_l}}B_{\!\!\dMatVec{l+1,n}}(\mat{M}_l^{-\T}\vect{h}) \) and \( \mat{M}_{l}^{-\T}\vect{h}\in\mathcal Q_d\). The linear independence of translates does not depend on the choice of the generating set \( \gSet{\mat{M}_l^\T} \). We will restrict the rest of the proof to the generating set \( \gSet[S]{\mat{M}_l^\T} \).

		For \( l=n \) we have for \( \vect{x}\in
		\mathcal Q_d \), that \( B_{\!\!\dMatVec{n+1,n}}(\vect{x}) = g(\vect{x}) > 0 \) and hence  \( c_{\vect{h}}(\varphi_{\mat{M}_n}^{\dMatVec{n+1,n}}) \neq 0 \) for \( \vect{h}\in\gSet[S]{\mat{M}_n^\T} \). 

		For \( 0\leq l<n \) the statement follows by induction over \( l \). By induction hypothesis \( B_{\!\!\dMatVec{l+2,n}}(\vect{x}) > 0 \) for \( \vect{x}\in
		\mathcal Q_d \) and for each \( \vect{y}\in\mathbb R^d \) there exist \( \vect{x}'\in\mathcal Q^d, \vect{z}'\in\mathbb Z^d \) such that \( \vect{y} = \vect{x}'+\vect{z} \). Hence if \( \vect{y} = \mat{J}_{l+1}^{-\T}\vect{x}\notin\mathcal Q_d\), we have \( \vect{x}'\in\mathcal Q_d \). The shift by \( \vect{z} \) does not affect the corresponding first factor because we have \( \vect{y}'=\vect{x} - \mat{J}_{j+1}^\T\vect{z} \). We obtain
		\begin{align*}
			B_{\!\!\dMatVec{l+1,n}}(\vect{x}) 
			&= \sumJMaskOp{g}{B_{\!\!\dMatVec{l+2,n}}}{\mat{J}_{l+1}}(\vect{x})\\
			&= \Biggl(\sum_{\vect{z}\in\mathbb Z} g(\vect{x} + \mat{J}^\T_{l+1}\vect{z})\Biggr)B_{\!\!\dMatVec{l+2,n}}(\mat{J}_{l+1}^{-\T}\vect{y}') >0\text{,}
		\end{align*}
		because \( g \) is nonnegative. Hence \(c_{\vect{h}}\bigl(\varphi_{\mat{M}_l}^{\dMatVec{l+1,n}}\bigr) \neq 0 \) for all \( \vect{h}\in\gSet[S]{\mat{M}_l^\T} \) and the translates \(\translationOp{\vect{y}}\varphi_{\mat{M}_{l}}^{\dMatVec{l+1,n}} \), \( \vect{y}\in\pSet[S]{\mat{M}_l} \) are linearly independent by \citep[Corollary 3.5]{LangemannPrestin:2010}.
		\item For \( l=n \) the function \( B_{\!\!\dMatVec{l+1,n}}(\mat{M}_l^{-\T}\circ)  \) is just a scaled, sheared and rotated version of the function \( g \) and hence by assumption in \( C^r(\mathbb R^d) \). For \( 0\leq l<n \) we obtain the statement by applying the same induction as in \ref{thm:dlVP:lu}.\qedhere
		\end{enumerate}
	\end{proof}
	The coefficients \( \hat{a}_{l,\vect{h}} \), \( l<n \), defined in the proof Theorem~\ref{thm:dlVP}\,\ref{thm:dlVP:subSpace} can also be interpreted as sampling the function \( \sqrt{|\det\mat{J}_{l+1}|}g^{\mat{J}_{l+1}}(\mat{M}_{l}^{-\T}\circ) \) at a subset of the integer vectors. As long as only these coefficients are needed, e.g. for the orthonormalization utilizing Lemma~\ref{lem:orthNInTranslates}, they can easily be obtained using the summation with respect to \( \mat{J}^\T_{l+1} \) of the function \( g \), which is analogously to Theorem~\ref{thm:dlVP}\,\ref{thm:dlVP:Sampling} a function in \( C^r(\mathbb R) \).
\begin{lem}\label{lem:SpezialDirichlet}
	For \( n\in\mathbb N \), \( \dMatVec{1,n}\in \{\mat{J}_X,\mat{J}_Y,\mat{J}_D\}^n\), and \( g = M_{\mat{E}_d}^c = \vect{\chi}_{[-\frac{1}{2},\frac{1}{2}]^d} \), the characteristic function of \( [-\tfrac{1}{2},\tfrac{1}{2}]^d \), the scaling functions of de la Vallée Poussin type \( \varphi_{\mat{M}_l}^{\dMatVec{l+1,n}} \), \( l=0,\ldots,n \), yield the Dirichlet kernels \( D_{\mat{M}_l}^{\text{re}} \) from \citep[Section 6]{LangemannPrestin:2010}.
\end{lem}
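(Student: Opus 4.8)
The plan is to reduce the statement to a single functional identity for the generating operator and then read off the Dirichlet kernel by sampling. First I would record the observation made for \( \vect{\alpha}=\vect{0} \) in Section~\ref{sec:dlVP1D}: with \( g = M_{\mat{E}_d}^c = \vect{\chi}_{\mathcal Q_d} \) one has \( B_{\vect{0}} = g \), so that the Dirichlet kernel from~\citep[Section 6]{LangemannPrestin:2010} is characterised by \( c_{\vect{k}}(D_{\mat{M}}^{\text{re}}) = \tfrac{1}{\sqrt{|\det\mat{M}|}}\, g(\mat{M}^{-\T}\vect{k}) \), \( \vect{k}\in\mathbb Z^d \). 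Since by Definition~\ref{def:dlVP} the scaling function of de la Vallée Poussin type satisfies \( c_{\vect{k}}\bigl(\varphi_{\mat{M}_l}^{\dMatVec{l+1,n}}\bigr) = \tfrac{1}{\sqrt{m_l}} B_{\!\!\dMatVec{l+1,n}}(\mat{M}_l^{-\T}\vect{k}) \), it suffices to show that the recursively built kernels collapse to \( g \), i.e.\ that \( B_{\!\!\dMatVec{l+1,n}} = g \) for every \( l\in\{0,\ldots,n\} \); sampling at the points \( \mat{M}_l^{-\T}\vect{k} \) then reproduces exactly \( D_{\mat{M}_l}^{\text{re}} \).

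I would establish \( B_{\!\!\dMatVec{l+1,n}} = g \) by downward induction on \( l \). The base case \( l = n \) is immediate, because \( B_{\!\!\dMatVec{n+1,n}} = g \) by definition. For the inductive step the recursion gives \( B_{\!\!\dMatVec{l+1,n}} = \sumJMaskOp{g}{B_{\!\!\dMatVec{l+2,n}}}{\mat{J}_{l+1}} = \sumJMaskOp{g}{g}{\mat{J}_{l+1}} \) by the induction hypothesis, so the whole proof rests on the one-step identity
\[
	\sumJMaskOp{g}{g}{\mat{J}}(\vect{x}) = g^{\mat{J}}(\vect{x})\, g(\mat{J}^{-\T}\vect{x}) = g(\vect{x}),
	\qquad \mat{J}\in\{\mat{J}_X,\mat{J}_Y,\mat{J}_D\}.
\]

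The key geometric ingredient is that \( \mat{J}^\T\mathcal Q_d \) is a fundamental domain of the sublattice \( \mat{J}^\T\mathbb Z^d \) — being the image under the bijection \( \mat{J}^\T \) of the fundamental domain \( \mathcal Q_d \) of \( \mathbb Z^d \) — together with the containment \( \mathcal Q_d \subseteq \mat{J}^\T\mathcal Q_d \). The latter I would check directly for the three matrices, using \( \mat{J}_X^\T\mathcal Q_d = [-1,1)\times[-\tfrac12,\tfrac12) \), \( \mat{J}_Y^\T\mathcal Q_d = [-\tfrac12,\tfrac12)\times[-1,1) \) and that \( \mat{J}_D^\T\mathcal Q_d \) is the diamond \( \{|u|+|v|<1\} \). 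Given this, the factor \( g(\mat{J}^{-\T}\vect{x}) \) vanishes off \( \mat{J}^\T\mathcal Q_d \), so it remains to see that on \( \mat{J}^\T\mathcal Q_d \) the summed factor \( g^{\mat{J}}(\vect{x}) = \#\{\vect{w}\in\mat{J}^\T\mathbb Z^d : \vect{x}+\vect{w}\in\mathcal Q_d\} \) equals \( \vect{\chi}_{\mathcal Q_d}(\vect{x}) \): if a translate \( \vect{x}+\vect{w} \) lands in \( \mathcal Q_d\subseteq\mat{J}^\T\mathcal Q_d \) while \( \vect{x}\in\mat{J}^\T\mathcal Q_d \), then \( \vect{x} \) and \( \vect{x}+\vect{w} \) are two representatives of the same \( \mat{J}^\T\mathbb Z^d \)-coset inside the fundamental domain \( \mat{J}^\T\mathcal Q_d \), forcing \( \vect{w}=\vect{0} \) and hence \( \vect{x}\in\mathcal Q_d \). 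Multiplying the two factors yields the identity.

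The step I expect to be the main obstacle is the boundary bookkeeping. The containment \( \mathcal Q_d \subseteq \mat{J}_D^\T\mathcal Q_d \) fails at the single corner \( (-\tfrac12,-\tfrac12) \), so the functional identity holds only up to the measure-zero boundary of these parallelepipeds. Because the claim is ultimately about the samples \( B_{\!\!\dMatVec{l+1,n}}(\mat{M}_l^{-\T}\vect{k}) \), I would resolve this either by adopting the symmetric boundary normalisation of \( g = M_{\mat{E}_d}^c \) that underlies the modified Dirichlet kernel (so that boundary samples carry compensating weights \( \tfrac12 \) in each affected coordinate), or by arguing that each sample is classified purely by the congruence \( \vect{k}\bmod\mat{M}_l^\T \), for which the clean interior identity already assigns the correct support \( \gSet[S]{\mat{M}_l^\T} \). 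Either way the sampled values agree with \( D_{\mat{M}_l}^{\text{re}} \) from~\citep[Section 6]{LangemannPrestin:2010}, which completes the proof.
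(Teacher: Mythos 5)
Your proposal is correct and follows essentially the same route as the paper's proof: downward induction on \( l \) with trivial base case \( l=n \), reducing everything to the one-step collapse \( \sumJMaskOp{g}{g}{\mat{J}} = g \) for \( \mat{J}\in\{\mat{J}_X,\mat{J}_Y,\mat{J}_D\} \), which the paper disposes of in a single sentence by observing that the sum contributes only one point of \( \vect{\chi}_{[-\frac{1}{2},\frac{1}{2}]^d} \) apart from boundary points. Your fundamental-domain justification of that identity, and in particular your explicit boundary bookkeeping (the corner where \( \mathcal Q_d\subseteq\mat{J}_D^\T\mathcal Q_d \) fails, resolved by the symmetric weight-\( \tfrac{1}{2} \) normalisation that underlies \( D_{\mat{M}_l}^{\text{re}} \)), is actually more careful than the paper's own terse gloss of exactly the same point.
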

\begin{proof}
	For \( l=n \) this is evident.
	For \( l=n-1,\,n-2,\ldots, 0 \) we again apply induction over \( l \) and the fact, that the sum in
	\[ B_{\!\!\dMatVec{l+1,n}} = 
	\Biggl(\sum_{\vect{z}\in\mathbb Z^d}\vect{\chi}_{\mathcal Q_d}(\circ-\mat{J}_{l+1}^\T\vect{z})\Biggr)B_{\!\!\dMatVec{l+2,n}}(\mat{J}_{l+1}^{-\T}\circ)
	\]
	consists only of one point or a certain number of points of \( \vect{\chi}_{[-\frac{1}{2},\frac{1}{2}]^d} \) at the boundary. The multiplication following the summation yields \( B_{\!\!\dMatVec{l+1,n}} = \vect{\chi}_{[-\frac{1}{2},\frac{1}{2}]^d}\), \(l=0,\ldots,n \).
\end{proof}
	In case of a dyadic vector \( \dMatVec{1,n} \), it is also possible to derive the corresponding wavelets. If we fix \( l\in\{1,\ldots,n\} \), we obtain, that the elements \( \vect{v}_l \in \pSet{\mat{J}_l^\T} \backslash \{\vect{0}\}\) and \( \vect{w}_l\in\pSet{\mat{J}_l}\backslash\{\vect{0}\} \) are uniquely determined due to \( |\det\mat{J}_l|=2 \). The function \( \tilde B_{\!\!\dMatVec{l,k}}\), \( 1\leq l< k< n \in\mathbb N \) is similarly to \( B_{\!\!\dMatVec{l,k}} \) given by
	\[
			\tilde B_{\dMatVec{l,k}} :=
			\exp(-2\pi\I\circ^\T\vect{w}_l)
			\,
			\sumJMaskOp[\Big]{\modulationOp{\vect{v}_l}g}{B_{\dMatVec{l+1,k}}}{\mat{J}_l}\text{,}
	\]
	where \(\modulationOp{\vect{v}}g := g(\circ - \vect{v}) \).
	It differs from \( B_{\!\!\dMatVec{l,k}} \) just in the first step of the recursive definition, where instead of \( g \), the shifted function \( \modulationOp{\vect{v}_l}g\) is used and a modulation is introduced by the factor \( \E^{-2\pi\I\circ^\T\vect{w}_l} \).
	\begin{definition}\label{def:vdLVPW}
		Let the matrix \( \mat{M}_0 \in \mathbb Z^{d\times d} \) be regular, \( m_0 = |\det\mat{M}_0| > 0 \), a dyadic vector \( \dMatVec{1,n}\), \( n\in \mathbb N \), of matrices be given and denote \( \mat{M}_l\), \(l=1,\ldots,n \), as in Definition~\ref{def:dlVP}.

		The functions \( \psi_{\mat{M}_l}^{\dMatVec{l+1,n}} \), which are defined by their Fourier coefficients as
		\begin{align*}
			c_{\vect{k}}\bigl(\psi_{\mat{M}_l}^{\dMatVec{l+1,n}}\bigr) &= \frac{1}{\sqrt{m_l}}\tilde B_{\!\!\dMatVec{l+1,n}}
			(\mat{M}_l^{-\T}\vect{k}),\quad \vect{k}\in\mathbb Z^{d\times d},\quad l=0,\ldots,n-1\text{,}
		\end{align*}
		are called wavelets of de la Vallée Poussin type.
	\end{definition}
	We introduce the corresponding spaces of their shifts for \( l\in\{0,\ldots,n-1\} \) as
	\[
		W_{\mat{M}_l}^{\dMatVec{l+1,n}}
		:= \operatorname{span}\Bigl\{
		\translationOp{\vect{y}}
		\psi_{\mat{M}_{l}}^{\dMatVec{l+1,n}}
		\ ;\ 
		\vect{y}\in\pSet{\mat{M}_{l}}
		\Bigr\}
		\text{.}
	\]
	\begin{thm}\label{thm:dlVPW}
	For \( n\in\mathbb N \), a regular matrix \( \mat{M}_0\in\mathbb Z^{d\times d} \) and a dyadic vector \( \dMatVec{1,n}\in \bigl(\mathbb Z^{d\times d}\bigr)^n \) of regular matrices, let the scaling functions of de la Vallée Poussin type \( \varphi_{\mat{M}_l}^{\dMatVec{l+1,n}}\), \(l=0,\ldots,n \),
		and the corresponding wavelets of de la Vallée Poussin type \( \psi_{\mat{M}_l}^{\dMatVec{l+1,n}}\), \(l=0,\ldots,n-1\), be given.

		Then, for each \( l\in\{0,\ldots,n-1\} \) the following holds.
		\begin{enumerate}[label=\alph*)]
			\item\label{thm:dlVPW:subSpace}
			\(
			\psi_{\mat{M}_l}^{\dMatVec{l+1,n}} \in
			V_{\mat{M}_{l+1}}^{\dMatVec{l+2,n}}\text{.}
			\)
			\item\label{thm:dlVPW:orth}
			\(
			V_{\mat{M}_{l+1}}^{\dMatVec{l+2,n}} =
			V_{\mat{M}_{l}}^{\dMatVec{l+1,n}} \oplus W_{\mat{M}_{l}}^{\dMatVec{l+1,n}}\text{.}
			\)
			\item\label{thm:dlVPW:Prop}
			The shifts 
			\(
				\translationOp{\vect{y}}\psi_{\mat{M}_{l}}^{\dMatVec{l+1,n}}\text{,}				\ \vect{y}\in\pSet{\mat{M}_l}\text{,}
			\)
			are linearly independent.
			\item\label{thm:dlVPW:Sampling} For \( g\in C^{r}(\mathbb R^d), r\in\mathbb N \),
			we have \( \tilde B_{\dMatVec{l+1,n}}(\mat{M}_l^{-\T}\circ)\in C^r(\mathbb R^d)\).
		\end{enumerate}
	\end{thm}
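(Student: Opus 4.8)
The four parts run parallel to Theorem~\ref{thm:dlVP}, so I would begin with a two-scale computation for the wavelet mirroring the proof of Theorem~\ref{thm:dlVP}\,\ref{thm:dlVP:subSpace}. Writing every \( \vect{k}\in\mathbb Z^d \) uniquely as \( \vect{k}=\vect{h}+\mat{M}_{l+1}^\T\vect{z} \) with \( \vect{h}\in\gSet{\mat{M}_{l+1}^\T} \) and using \( \mat{M}_l^{-\T}\mat{M}_{l+1}^\T=\mat{J}_{l+1}^\T \), I would expand \( \tilde B_{\dMatVec{l+1,n}}(\mat{M}_l^{-\T}\vect{h}+\mat{J}_{l+1}^\T\vect{z}) \). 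The modulation factor \( \E^{-2\pi\I(\mat{M}_l^{-\T}\vect{h}+\mat{J}_{l+1}^\T\vect{z})^\T\vect{w}_{l+1}} \) loses its \( \vect{z} \)-dependence because \( \mat{J}_{l+1}\vect{w}_{l+1}\in\mathbb Z^d \); the summation factor \( (\modulationOp{\vect{v}_{l+1}}g)^{\mat{J}_{l+1}} \) is \( \mat{J}_{l+1}^\T\mathbb Z^d \)-periodic, hence also \( \vect{z} \)-independent; and the last factor becomes \( B_{\dMatVec{l+2,n}}(\mat{M}_{l+1}^{-\T}\vect{h}+\vect{z}) \). This yields \( c_{\vect{h}+\mat{M}_{l+1}^\T\vect{z}}(\psi_{\mat{M}_l}^{\dMatVec{l+1,n}})=\hat b_{l,\vect{h}}\,c_{\vect{h}+\mat{M}_{l+1}^\T\vect{z}}(\varphi_{\mat{M}_{l+1}}^{\dMatVec{l+2,n}}) \) with \( \hat b_{l,\vect{h}}=\sqrt{2}\,\E^{-2\pi\I(\mat{M}_l^{-\T}\vect{h})^\T\vect{w}_{l+1}}g^{\mat{J}_{l+1}}(\mat{M}_l^{-\T}\vect{h}-\vect{v}_{l+1}) \), and part~\ref{thm:dlVPW:subSpace} then follows from~\eqref{eq:coeff:hata}.

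For part~\ref{thm:dlVPW:orth} the heart of the matter is to link \( \hat b_{l,\vect{h}} \) to the two-scale coefficients \( \hat a_{l,\vect{h}}=\sqrt{2}\,g^{\mat{J}_{l+1}}(\mat{M}_l^{-\T}\vect{h}) \) from~\eqref{eq:twoscale}. Splitting property~\ref{prop:g:sum} over the two cosets of \( \mat{J}_{l+1}^\T\mathbb Z^d \) gives the quadrature identity \( g^{\mat{J}_{l+1}}(\vect{x})+g^{\mat{J}_{l+1}}(\vect{x}+\vect{g})=1 \) for the unique nonzero \( \vect{g}\in\gSet{\mat{J}_{l+1}^\T} \); combined with the shift carried by \( \modulationOp{\vect{v}_{l+1}}g \) this should turn the modulus of \( \hat b_{l,\vect{h}} \) into that of the partner coefficient \( \hat a_{l,\vect{h}+\mat{M}_l^\T\vect{g}}=\sqrt{2}\,(1-g^{\mat{J}_{l+1}}(\mat{M}_l^{-\T}\vect{h})) \). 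The phase comes from the modulation: as the pairing \( \gSet{\mat{J}_{l+1}^\T}\times\pSet{\mat{J}_{l+1}}\to\mathbb R/\mathbb Z \), \( (\vect{g},\vect{w})\mapsto\vect{g}^\T\vect{w} \), is non-degenerate between two groups of order two, one gets \( \E^{-2\pi\I\vect{g}^\T\vect{w}_{l+1}}=-1 \). I would package this as \( \hat b_{l,\vect{h}}=\sigma_{\vect{h}}\hat a_{l,\vect{h}+\mat{M}_l^\T\vect{g}} \) with \( \sigma_{\vect{h}}:=\E^{-2\pi\I(\mat{M}_l^{-\T}\vect{h})^\T\vect{w}_{l+1}} \), so that \( |\sigma_{\vect{h}}|=1 \) and \( \sigma_{\vect{h}}=-\sigma_{\vect{h}+\mat{M}_l^\T\vect{g}} \) — exactly the conditions isolated in Lemma~\ref{lem:orthNInTranslates}, which identify \( W_{\mat{M}_l}^{\dMatVec{l+1,n}} \) as the complement of \( V_{\mat{M}_l}^{\dMatVec{l+1,n}} \) inside \( V_{\mat{M}_{l+1}}^{\dMatVec{l+2,n}} \). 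Since \( \dim V_{\mat{M}_{l+1}}^{\dMatVec{l+2,n}}=m_{l+1}=2m_l \) equals the sum of the dimensions of the two summands (by Theorem~\ref{thm:dlVP}\,\ref{thm:dlVP:lu} and part~\ref{thm:dlVPW:Prop}), it suffices to see that they intersect only in \( \{\vect{0}\} \); equivalently, for each \( \vect{h}\in\gSet{\mat{M}_l^\T} \) the \( 2\times2 \) polyphase matrix in the coefficients \( \hat a_{l,\cdot},\hat b_{l,\cdot} \) over the two lifts \( \vect{h},\vect{h}+\mat{M}_l^\T\vect{g} \) is regular. The mirror relation makes its determinant equal to \( -\sigma_{\vect{h}}(\hat a_{l,\vect{h}}^2+\hat a_{l,\vect{h}+\mat{M}_l^\T\vect{g}}^2) \), which is nonzero because \( \hat a_{l,\vect{h}}=\sqrt{2}\,g^{\mat{J}_{l+1}}(\mat{M}_l^{-\T}\vect{h}) \) and \( \hat a_{l,\vect{h}+\mat{M}_l^\T\vect{g}}=\sqrt{2}\,(1-g^{\mat{J}_{l+1}}(\mat{M}_l^{-\T}\vect{h})) \) cannot both vanish.

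Part~\ref{thm:dlVPW:Prop} I would obtain from \citep[Corollary 3.5]{LangemannPrestin:2010} by showing \( \sum_{\vect{z}\in\mathbb Z^d}|c_{\vect{h}+\mat{M}_l^\T\vect{z}}(\psi_{\mat{M}_l}^{\dMatVec{l+1,n}})|^2>0 \) for every \( \vect{h}\in\gSet{\mat{M}_l^\T} \): along the coset \( \{\mat{M}_l^{-\T}\vect{h}+\vect{z}\}_{\vect{z}\in\mathbb Z^d} \) the factor \( g^{\mat{J}_{l+1}}(\mat{M}_l^{-\T}\vect{h}-\vect{v}_{l+1}+\vect{z}) \) assumes only the two values \( g^{\mat{J}_{l+1}}(\mat{M}_l^{-\T}\vect{h}-\vect{v}_{l+1}) \) and \( g^{\mat{J}_{l+1}}(\mat{M}_l^{-\T}\vect{h}-\vect{v}_{l+1}+\vect{g}) \), which add to \( 1 \) by the quadrature identity and so cannot both vanish, while \( B_{\dMatVec{l+2,n}} \) is strictly positive by Theorem~\ref{thm:dlVP}\,\ref{thm:dlVP:lu}; the unimodular modulation does not affect the modulus. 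Finally, part~\ref{thm:dlVPW:Sampling} follows from the very same induction as in Theorem~\ref{thm:dlVP}\,\ref{thm:dlVP:Sampling}, since \( \tilde B_{\dMatVec{l+1,n}} \) differs from \( B_{\dMatVec{l+1,n}} \) only by replacing \( g \) with its translate \( \modulationOp{\vect{v}_{l+1}}g \) and by the globally smooth factor \( \E^{-2\pi\I\circ^\T\vect{w}_{l+1}} \), neither of which affects membership in \( C^r(\mathbb R^d) \).

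I expect the main obstacle to be part~\ref{thm:dlVPW:orth}, and within it the verification of the quadrature-mirror relation \( \hat b_{l,\vect{h}}=\sigma_{\vect{h}}\hat a_{l,\vect{h}+\mat{M}_l^\T\vect{g}} \): one must check that the shift \( \vect{v}_{l+1} \) in the periodisation \( g^{\mat{J}_{l+1}}(\circ-\vect{v}_{l+1}) \) indeed interchanges the two cosets modulo \( \mat{J}_{l+1}^\T\mathbb Z^d \), so that property~\ref{prop:g:sum} converts \( g^{\mat{J}_{l+1}}(\mat{M}_l^{-\T}\vect{h}-\vect{v}_{l+1}) \) into \( 1-g^{\mat{J}_{l+1}}(\mat{M}_l^{-\T}\vect{h})=g^{\mat{J}_{l+1}}(\mat{M}_l^{-\T}\vect{h}+\vect{g}) \), and simultaneously that the modulation \( \vect{w}_{l+1} \) supplies the sign flip \( \sigma_{\vect{h}}=-\sigma_{\vect{h}+\mat{M}_l^\T\vect{g}} \). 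Once this single identity is in place, everything else is bookkeeping that mirrors the already-established scaling-function case.
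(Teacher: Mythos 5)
Your proposal tracks the paper's proof closely in its skeleton: part a) is exactly the paper's two-scale computation (your formula for \( \hat b_{l,\vect{h}} \) is the paper's \eqref{eq:twoscaleW}), part d) is the paper's induction, and for part b) you, like the paper, aim at the conditions of Lemma~\ref{lem:orthNInTranslates} with \( \sigma_{\vect{h}} = \E^{-2\pi\I\vect{h}^\T\mat{M}_l^{-1}\vect{w}_{l+1}} \); your phase computation via the non-degenerate order-two pairing \( (\vect{g},\vect{w})\mapsto\vect{g}^\T\vect{w} \), giving \( \sigma_{\vect{h}}=-\sigma_{\vect{h}+\mat{M}_l^\T\vect{g}} \), is correct. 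For part c) you deviate: the paper deduces c) from b) and Theorem~\ref{thm:dlVP}\,\ref{thm:dlVP:lu}, whereas you argue directly via \citep[Corollary 3.5]{LangemannPrestin:2010}; that route is viable, but note \( B_{\!\!\dMatVec{l+2,n}} \) is only known to be positive on \( \mathcal Q_d \), not globally, so you must additionally pick, inside the coset of \( \vect{z} \bmod \mat{J}_{l+1}^\T\mathbb Z^d \) where the periodised factor is nonzero, the representative with \( \mat{J}_{l+1}^{-\T}(\mat{M}_l^{-\T}\vect{h}+\vect{z})\in\mathcal Q_d \) (one exists in each coset, since \( \mat{J}_{l+1}^\T\mathcal Q_d \) tiles \( \mathbb R^d \) under \( \mat{J}_{l+1}^\T\mathbb Z^d \)).

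The genuine gap is exactly the step you yourself deferred, and your proposed verification of it fails as stated. You claim the shift by \( \vect{v}_{l+1} \) ``interchanges the two cosets modulo \( \mat{J}_{l+1}^\T\mathbb Z^d \)'' so that \ref{prop:g:sum} converts \( g^{\mat{J}_{l+1}}(\vect{x}-\vect{v}_{l+1}) \) into \( 1-g^{\mat{J}_{l+1}}(\vect{x}) = g^{\mat{J}_{l+1}}(\vect{x}+\vect{g}) \). But \( \vect{v}_{l+1} \) is the nonzero element of the \emph{pattern} \( \pSet{\mat{J}_{l+1}^\T} \), hence a non-integer vector in \( \mat{J}_{l+1}^{-\T}\mathbb Z^d\setminus\mathbb Z^d \); translation by it does not act on the cosets of \( \mat{J}_{l+1}^\T\mathbb Z^d \) in \( \mathbb Z^d \) at all, and \ref{prop:g:sum} says nothing relating \( g^{\mat{J}_{l+1}}(\vect{x}-\vect{v}_{l+1}) \) to \( g^{\mat{J}_{l+1}}(\vect{x}) \). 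Concretely, for \( d=1 \), \( \mat{J}_{l+1}=(2) \), \( v_{l+1}=\tfrac12 \) and \( g=b_{1/8} \): \( \sum_{z}g(0-\tfrac12+2z)=b_{1/8}(-\tfrac12)=\tfrac12 \), while \( 1-\sum_{z}g(0+2z)=0 \), so the mirror relation \( \hat b_{l,\vect{h}}=\sigma_{\vect{h}}\hat a_{l,\vect{h}+\mat{M}_l^\T\vect{g}} \) is violated and your polyphase determinant is no longer \( -\sigma_{\vect{h}}\bigl(\hat a_{l,\vect{h}}^2+\hat a_{l,\vect{h}+\mat{M}_l^\T\vect{g}}^2\bigr) \), so parts b) and c) collapse as you derive them. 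What actually makes the identity true --- and what the paper tacitly uses when it asserts that the requirements of Lemma~\ref{lem:orthNInTranslates} hold, consistent with the one-dimensional prototype \( w(x) \), whose shift is the \emph{integer} \( +1 \) --- is taking the shift to be the nonzero integer representative \( \vect{g}\in\gSet{\mat{J}_{l+1}^\T} \): then \( -\vect{g}\equiv\vect{g}\bmod\mat{J}_{l+1}^\T\mathbb Z^d \) because \( 2\vect{g}\in\mat{J}_{l+1}^\T\mathbb Z^d \) (the quotient has order two), whence \( \bigl(\modulationOp{\vect{g}}g\bigr)^{\mat{J}_{l+1}}(\vect{x})=g^{\mat{J}_{l+1}}(\vect{x}+\vect{g}) \) identically, and only then does \ref{prop:g:sum} yield \( =1-g^{\mat{J}_{l+1}}(\vect{x})=\hat a_{l,\vect{h}+\mat{M}_l^\T\vect{g}}/\sqrt{2} \) at \( \vect{x}=\mat{M}_l^{-\T}\vect{h} \). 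So the single identity your whole argument rests on needs the generating-set reading of \( \vect{v}_{l+1} \), not the pattern reading you adopted; with that correction the remaining bookkeeping goes through as you describe.
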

	\begin{proof}
		\begin{enumerate}[label=\alph*)]
			\item Analogously to the coefficients \( \hat a_{\vect{h}} \) from the proof of Theorem~\ref{thm:dlVP}\,\ref{thm:dlVP:subSpace} we define for \(\vect{h}\in\gSet{\mat{M}_{l+1}^\T}\) the coefficients 
			\begin{equation}\label{eq:twoscaleW}
				\begin{split}
					\hat b_{\vect{h}} :&= \sqrt{|\det\mat{J}_{l+1}|}
					\exp(-2\pi\I\vect{k}^\T\mat{M}_{l}^{-1}\vect{w}_{l+1})
					\bigl(\modulationOp{\vect{v}_{l+1}}g\bigr)^{\mat{J}_{l+1}}(\mat{M}^{-\T}_{l}\vect{h})\\
				&=
					\sqrt{2}
					\exp(-2\pi\I\vect{h}^\T\mat{M}_{l}^{-1}\vect{w}_{l+1})
					\Bigl(\sum_{\vect{z}\in\mathbb Z^d} g(\mat{M}_l^{-\T}\vect{h}+\mat{J}_{l+1}^{-\T}\vect{z} - \vect{v}_{l+1})\Bigr)
					\text{.}
				\end{split}
			\end{equation}
			The statement \ref{thm:dlVPW:subSpace} follows using the same steps as in the proof of Theorem~\ref{thm:dlVP}\,\ref{thm:dlVP:subSpace}, replacing \(B_{\dMatVec{l+1,n}} \) by \( \tilde B_{\dMatVec{l+1,n}} \) and hence obtaining \( \hat b_{\vect{h}} \) instead of \( \hat a_{\vect{h}} \) in the calculations.
			\item The coefficients \( \hat a_{\vect{h}}\), \(\vect{h}\in\gSet{\mat{M}_{l+1}^\T} \), from Theorem~\ref{thm:dlVP}\,\ref{thm:dlVP:subSpace} and \( \hat b_{\vect{h}}\), \(\vect{h}\in\gSet{\mat{M}_{l+1}^\T} \), from \ref{thm:dlVPW:subSpace} fulfill the requirements of Lemma 4.3 in \citep{LangemannPrestin:2010}, more precisely the values
			\begin{equation*}
				\sigma_{\vect{h}} := 
				\exp(-2\pi\I\vect{h}^\T\mat{M}_{l}^{-1}\vect{w}_{l+1})
				,\quad\vect{h}\in\gSet{\mat{M}_{l+1}^\T}\text{,}
			\end{equation*}
		 	even fulfill the requirements of Lemma~\ref{lem:orthNInTranslates}, i.e. if the scaling functions of de la Vallée Poussin type \( \varphi_{\mat{M}_{l}}^{\dMatVec{l+1,n}} \) and \( \varphi_{\mat{M}_{l+1}}^{\dMatVec{l+2,n}} \) have orthonormal shifts, thus does the wavelet \( \psi_{\mat{M}_{l}}^{\dMatVec{l+1,n}} \).
			\item The statement follows directly from \ref{thm:dlVPW:orth} and the linear independence of the shifts \( \translationOp{\vect{x}}\varphi_{\mat{M}_l}^{\dMatVec{l+1,n}}\), \(\vect{x}\in\pSet{\mat{M}_l}\), and \(\translationOp{\vect{y}}\varphi_{\mat{M}_{l+1}}^{\dMatVec{l+2,n}}\), \(\vect{y}\in\pSet{\mat{M}_{l+1}}\), cf. Theorem~\ref{thm:dlVP}\,\ref{thm:dlVP:lu}.
			\item Using the same modifications as in \ref{thm:dlVPW:subSpace}, \( \tilde B_{\dMatVec{l+1,n}}(\mat{M}_l^{-\T}\circ)\in C^r(\mathbb R^d)\) holds analogously to Theorem~\ref{thm:dlVP}\,\ref{thm:dlVP:Sampling}.\qedhere
		\end{enumerate}
	\end{proof}
	\begin{cor}
		For \( n\in\mathbb N \), \( \dMatVec{1,n}\in \{\mat{J}_X,\mat{J}_Y,\mat{J}_D\}^n\) and \( g = M_{\mat{E}_d}^c \) we obtain the Dirichlet wavelets \( \psi_{\mat{N}} \) from \citep[Eq.~(42)]{LangemannPrestin:2010} analogously to Lemma~\ref{lem:SpezialDirichlet}.
	\end{cor}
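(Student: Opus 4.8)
The plan is to reuse the proof of Lemma~\ref{lem:SpezialDirichlet} almost verbatim, exploiting that \( \tilde B_{\dMatVec{l+1,n}} \) differs from \( B_{\dMatVec{l+1,n}} \) only in the outermost step of the recursion, where the admissible function \( g \) is replaced by its shift \( \modulationOp{\vect{v}_{l+1}}g = g(\circ-\vect{v}_{l+1}) \) and an extra modulation factor \( \E^{-2\pi\I\circ^\T\vect{w}_{l+1}} \) is introduced. First I would invoke Lemma~\ref{lem:SpezialDirichlet} directly to collapse the inner object, \( B_{\dMatVec{l+2,n}} = \vect{\chi}_{[-\frac12,\frac12]^d} \). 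Substituting this into the definition of \( \tilde B_{\dMatVec{l+1,n}} \) gives
\[
  \tilde B_{\dMatVec{l+1,n}}
  = \E^{-2\pi\I\circ^\T\vect{w}_{l+1}}
    \Biggl(\sum_{\vect{z}\in\mathbb Z^d}
      \vect{\chi}_{[-\frac12,\frac12]^d}(\circ-\vect{v}_{l+1}-\mat{J}_{l+1}^\T\vect{z})\Biggr)
    \vect{\chi}_{[-\frac12,\frac12]^d}(\mat{J}_{l+1}^{-\T}\circ)\text{,}
\]
so that only the modulation factor and the coset shift by \( \vect{v}_{l+1} \) distinguish this computation from the one already carried out in Lemma~\ref{lem:SpezialDirichlet}.

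Second, I would argue exactly as in that lemma that the inner sum of shifted characteristic functions contributes at most a single point, together with a finite number of coinciding boundary points, since the translates of \( \vect{\chi}_{[-\frac12,\frac12]^d} \) by the sublattice \( \mat{J}_{l+1}^\T\mathbb Z^d \) are disjoint. Multiplication by \( \vect{\chi}_{[-\frac12,\frac12]^d}(\mat{J}_{l+1}^{-\T}\circ) \) then restricts the result to the fundamental parallelepiped \( \mat{J}_{l+1}^\T[-\frac12,\frac12]^d \). Whereas in the scaling-function case the shift was absent and the product collapsed back to \( \vect{\chi}_{[-\frac12,\frac12]^d} \), here the shift by \( \vect{v}_{l+1}\in\pSet{\mat{J}_{l+1}^\T}\setminus\{\vect{0}\} \) selects the \emph{other} of the two cosets of \( \mat{J}_{l+1}^\T\mathbb Z^d \) in \( \mathbb Z^d \), i.e.\ precisely the frequencies adjoined when passing from \( \mat{M}_l \) to \( \mat{M}_{l+1} \). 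This is exactly the support structure of the Dirichlet wavelet.

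Third, evaluating the Fourier coefficients by sampling \( \tilde B_{\dMatVec{l+1,n}}(\mat{M}_l^{-\T}\circ) \) on the integer vectors and carrying the phase \( \E^{-2\pi\I\circ^\T\vect{w}_{l+1}} \) through the sampling, I would match the resulting expression term by term with the explicit formula in \citep[Eq.~(42)]{LangemannPrestin:2010}. Since Theorem~\ref{thm:dlVPW}\,\ref{thm:dlVPW:orth} already guarantees that \( \translationOp{\vect{y}}\psi_{\mat{M}_l}^{\dMatVec{l+1,n}} \) span the orthogonal complement \( W_{\mat{M}_l}^{\dMatVec{l+1,n}} \) of \( V_{\mat{M}_l}^{\dMatVec{l+1,n}} \) in \( V_{\mat{M}_{l+1}}^{\dMatVec{l+2,n}} \), it suffices to verify that the two families of coefficients coincide on their common support, up to the admissible modulus-one factor of Lemma~\ref{lem:orthNInTranslates}.

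The main obstacle I expect is bookkeeping rather than anything conceptual: for each of the three matrices \( \mat{J}_X,\mat{J}_Y,\mat{J}_D \) the vectors \( \vect{v}_{l+1}\in\pSet{\mat{J}_{l+1}^\T}\setminus\{\vect{0}\} \) and \( \vect{w}_{l+1}\in\pSet{\mat{J}_{l+1}}\setminus\{\vect{0}\} \) take different explicit values, and one must check in each of the three cases that the induced phase \( \E^{-2\pi\I\vect{h}^\T\mat{M}_l^{-1}\vect{w}_{l+1}} \) together with the half-support singled out by \( \vect{v}_{l+1} \) reproduces the precise signs and frequency shifts appearing in \citep[Eq.~(42)]{LangemannPrestin:2010}.
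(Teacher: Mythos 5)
Your proposal matches the paper's intended argument: the paper gives no separate proof of this corollary, justifying it precisely by the analogy to Lemma~\ref{lem:SpezialDirichlet} that you spell out --- collapsing the inner factor to \( \vect{\chi}_{[-\frac{1}{2},\frac{1}{2}]^d} \) by that lemma, observing that \( \tilde B_{\!\!\dMatVec{l+1,n}} \) differs from \( B_{\!\!\dMatVec{l+1,n}} \) only in the outermost recursion step through the shift \( \modulationOp{\vect{v}_{l+1}}g \) and the modulation \( \E^{-2\pi\I\circ^\T\vect{w}_{l+1}} \), and matching the resulting support and phase with \citep[Eq.~(42)]{LangemannPrestin:2010}. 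One cosmetic caution: \( \vect{v}_{l+1}\in\pSet{\mat{J}_{l+1}^\T}\setminus\{\vect{0}\} \) is a (generally non-integer) pattern point rather than an integer representative of a coset of \( \mat{J}_{l+1}^\T\mathbb Z^d \) in \( \mathbb Z^d \), but since in the scaled frequency variable \( \mat{M}_l^{-\T}\vect{k} \) the shifted cubes capture exactly the frequencies adjoined in passing from \( \mat{M}_l \) to \( \mat{M}_{l+1} \), this looseness does not affect your argument.
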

	The presented construction of the scaling functions
	\( \varphi_{\mat{M}_{l}}^{\dMatVec{l+1,n}} \) and wavelets
	\( \psi_{\mat{M}_{l}}^{\dMatVec{l+1,n}} \) of de la Vallée Poussin type
	introduces a huge variety of periodic anisotropic MRAs: On the one hand the
	function \( g \) can be chosen with very less restrictions, especially it
	can be chosen arbitrarily smooth. Hence the scaling functions and wavelets
	are obtained by sampling an arbitrarily smooth function, e.g. by choosing
	box splines. The directional smoothness of \( g \) does construct a certain
	directional smoothness with respect to the parallelotope defined by
	\(\mat{M}_l^\T\mathcal Q_d\) on each level. On the other hand, the
	construction introduces the possibility to choose any matrix \( \mat{J}_l \)
	in the vector of scaling matrices. This extends the known Dirichlet case
	especially to the shear matrices, e.g. \( \mat{J}_{\vect{Y}}^+
	:= \bigl(\begin{smallmatrix}1&1\\0&2\end{smallmatrix}\bigr)\), but also any
	other integral regular matrix can be chosen. In the construction of the
	dyadic wavelets spanning the orthogonal complement, all matrices of
	determinant \( |\det{\mat{J}_l}|=2 \) are possible.

	Taking a closer look at the coefficients \( \hat a_{l,\vect{h}} \) and
	\( \hat b_{l,\vect{h}} \), that describe the two-scale relation for one
	level of the scaling function and the wavelet respectively, we see
	from~\eqref{eq:twoscale} and~\eqref{eq:twoscaleW}, that both are obtained by
	sampling a certain sum of shifts of \( g \). If \( g \) has finite support,
	these sums also have finite support. Moreover, these coefficients are
	obtained by sampling a function as smooth as \( g \).

	A small extension to this construction, that was omitted in order to keep
	the notation simple, is the possibility to also chose \( g \) for each level
	of the nested spaces separately, i.e. to introduce admissible functions
	\( g_0,\ldots,g_n \) to define the sum in each level. Then of course the
	functions \( B_{\dMatVec{l,k}} , \tilde B_{\dMatVec{l,k}} \) have to be
	adapted, to use \( g_l \) in the operator
	\( \sumJMaskOp{g_l}{B_{\!\!\dMatVec{l+1,n}}}{\mat{J}_l} \). The coefficients
	\( \hat a_{l,\vect{h}} \) and \( \hat b_{l,\vect{h}} \) would each depend on
	\( g_l \) and \( \mat{J}_l \).

	\begin{figure}
		\begin{subfigure}[t]{.5\textwidth}\centering
			\includegraphics{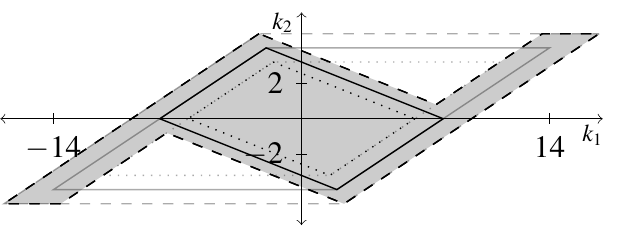}
		\caption[]{
	\( \operatorname{supp} c_{\vect{k}}
	\bigl(\varphi_{\mat{N}}^{(\mat{J})}\bigr)\) and
	\( \operatorname{supp} c_{\vect{k}}\bigl(\varphi_{\mat{M}}^{\emptyset}
	 \bigr)\)}
	 \label{subfig:SuppPhiN}
		\end{subfigure}
		\begin{subfigure}[t]{.5\textwidth}\centering
				\includegraphics{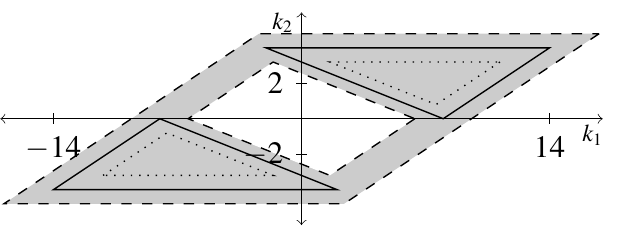}
			\caption{\( \operatorname{supp} c_{\vect{k}}\bigl(\psi_{\mat{N}}^{(\mat{J})}\bigr) \)}\label{subfig:SuppPsiN}
		\end{subfigure}
		\begin{subfigure}[t]{.5\textwidth}\centering
			\includegraphics[width=.975\textwidth]{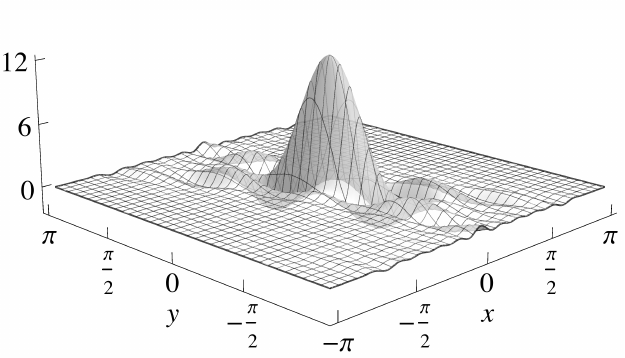}
			\caption{\( \varphi_{\mat{M}}^{\emptyset} \)}\label{subfig:PhiM}
		\end{subfigure}
		\begin{subfigure}[t]{.5\textwidth}\centering
			\includegraphics[width=.975\textwidth]{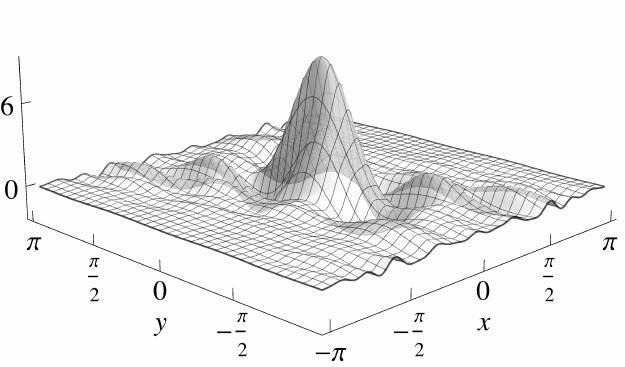}
			\caption{\( \varphi_{\mat{N}}^{(\mat{J})} \)}\label{subfig:PhiN}
		\end{subfigure}
		\begin{subfigure}[t]{.5\textwidth}\centering
			\includegraphics[width=.975\textwidth]{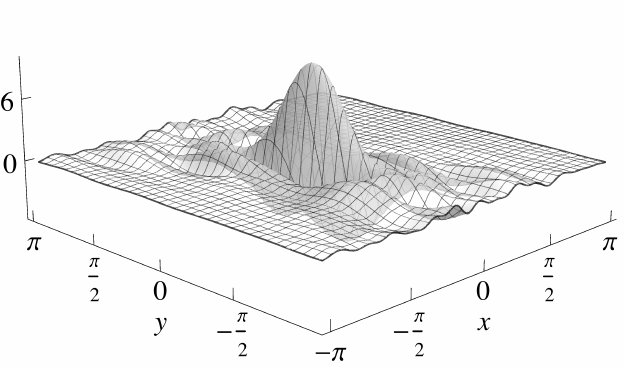}
			\caption{\( \psi_{\mat{N}}^{(\mat{J})} \)
			}\label{subfig:PsiN}
		\end{subfigure}
		\begin{subfigure}[t]{.5\textwidth}\centering
			\includegraphics[width=.975\textwidth]{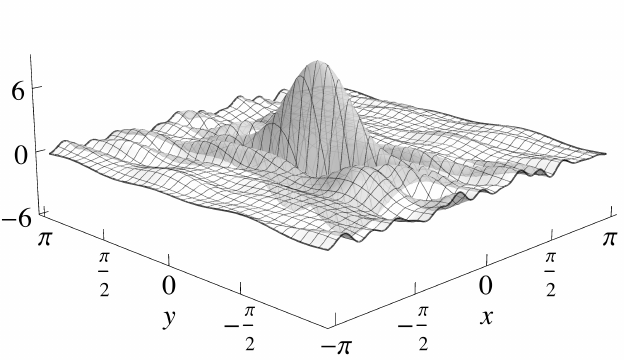}
			\caption{\( \psi_{\mat{N}}^{(\mat{J})} \), for \( \vect{\alpha} = \vect{0} \)}\label{subfig:PsiNDirichlet}
		\end{subfigure}
		\caption[]{The support of the Fourier coefficients \( \operatorname{supp} c_{\vect{k}}\bigl(\varphi_{\mat{N}}^{(\mat{J})}\bigr)\), \( \operatorname{supp} c_{\vect{k}}\bigl(\varphi_{\mat{M}}^{\emptyset}\bigr)\) and \( \operatorname{supp} c_{\vect{k}}\bigl(\psi_{\mat{N}}^{(\mat{J})}\bigr) \) from Example~\ref{ex:dlVP} shown in \subref{subfig:SuppPhiN} and \subref{subfig:SuppPsiN}. The corresponding functions \( \varphi_{\mat{M}}^{\emptyset} \), \( \varphi_{\mat{N}}^{(\mat{J})} \) and \( \psi_{\mat{N}}^{(\mat{J})} \) are plotted in \subref{subfig:PhiM}-\subref{subfig:PsiN}, where finally \subref{subfig:PsiNDirichlet} is constructed setting \( \vect{\alpha}=\vect{0} \) and hence obtaining a wavelet of Dirichlet type, to which \( \psi_{\mat{N}}^{(\mat{J})} \) in \subref{subfig:PsiN} looks more localized. }
	\end{figure}
	\begin{ex}\label{ex:dlVP}
		We look at the decomposition \(%
		\mat{M}%
		:= \bigl(\begin{smallmatrix}16&0\\12&8\end{smallmatrix}\bigr)
		= \mat{J}\mat{N}
		:= \bigl(\begin{smallmatrix}1&1\\0&2\end{smallmatrix}\bigr)\bigl(\begin{smallmatrix}10&-4\\6&4\end{smallmatrix}\bigr)
		\) and use the function \( g=B_{\vect{\alpha}} \), \( \vect{\alpha} = \tfrac{1}{10}(1,1)^\T \), and \( n=1 \). Then, we obtain a sequence of two functions, which consists of \( \varphi_{\mat{M}}^{\dMatVec{2,1}} = \varphi_{\mat{M}}^{\emptyset} \) and \( \varphi_{\mat{N}}^{\mathcal J_{1,1}} = \varphi_{\mat{N}}^{(\mat{J})} \). Both are given by their Fourier coefficients, cf. Definition~\ref{def:dlVP}. In Figure~\ref{subfig:SuppPhiN}, the support of both functions \( g(\mat{M}^{-\T}\circ) \) in gray lines and \( 	\sumJMaskOp[\big]{g}{g}{\mat{J}}(\mat{N}^{-\T}\circ) \) in black lines with gray shade is shown. For both, the dashed line marks the boundary of the support, while the dotted line encircles the area, where the function and hence the coefficients are constantly \( \tfrac{1}{|\sqrt{\det\mat{N}}|}=\tfrac{1}{8} \). The solid line further marks the boundary of the generating set, hence all integer points inside this parallelepiped including the left and lower boundary belong to the generating set \( \gSet[S]{\mat{M}^\T} \). One tenth along that line from any edge, the coefficients equal \( \tfrac{1}{2} \). The maximum value \( c_{\vect{k}}\bigl(\varphi_{\mat{N}}^{(\mat{J})}\bigr) \) on the additional nonzero area outside the parallelepiped is \( \tfrac{1}{4} \) on the solid gray line. Figure~\ref{subfig:SuppPsiN} denotes the support of the corresponding wavelet, restricted to the gray area. Again, both dotted lines mark the plateaus in absolute values of the coefficients.
		
		The corresponding functions \( \varphi_{\mat{M}}^{\emptyset} \), \( \varphi_{\mat{N}}^{(\mat{J})} \) and \( \psi_{\mat{N}}^{(\mat{J})} \) are shown in Figures~\ref{subfig:PhiM}-\subref{subfig:PsiN}. In comparison to the wavelet of de la Vallée Poussin type in Figure~\ref{subfig:PsiN}, a wavelet constructed by using \( g = \chi_{\mathcal Q_d}
		\) is shown in Figure~\ref{subfig:PsiNDirichlet}. This corresponds to a wavelet of Dirichlet type, cf. Lemma~\ref{lem:SpezialDirichlet}, though for the original construction the shear matrix used in this example is not possible. The wavelet of de la Vallée Poussin type is better localized, which can be seen by the flatness of the function away from the origin. This is due to the continuity of the function \(g\), that is continuous for the de la Vallée Poussin type while being a characteristic function of the symmetric unit cube for the Dirichlet case.
	\end{ex}
%
%
	\section{A multiresolution analysis of de la Vallée Poussin type}%
	\label{sec:dlVP-MRA} 
		While the construction from Section~\ref{sec:dlVP} introduces a huge variety of possibilities to choose \( g \) and the scaling matrices \( \mat{J}_l \), \( l=0,\ldots,n \), it introduces the necessity, that a scaling function \( \varphi_{\mat{M}_{l}}^{\dMatVec{l+1,n}} \) or wavelet \( \psi_{\mat{M}_{l}}^{\dMatVec{l+1,n}} \) of de la Vallée Poussin type depends on all scaling matrices \( \mat{J}_l \), the first ones \( \mat{J}_1,\ldots,\mat{J}_l \) in a natural way, because they define \( \mat{M}_l \), but also all following ones, i.e. \( \mat{J}_{l+1},\ldots,\mat{J}_{n} \). This section will introduce a third condition of \( g \) in order to reduce this dependency as far as possible.
	
	For the one-dimensional case from Section~\ref{sec:dlVP1D}, choosing the same function \( g \) is equivalent to setting \( \alpha=\beta \). Theorem~4.1.3 in~\citep{Selig:1998} yields, that these functions constitute an MRA if and only if \( \alpha\leq\tfrac{1}{6} \). If \( \alpha > \tfrac{1}{6} \), Section~\ref{sec:dlVP} does still introduce a construction, though the Fourier coefficients are then constructed from three separated intervals, the support of \( \bigr(\sum_{z\in\mathbb Z} g(\circ+2z)\bigl)g(2^{-1}\circ) \) consists of. Though, the first case is preferable due to it's easier form and single interval of support.
	
	For the multivariate case, we have at least to use \( \varphi_{\mat{M}_l}^{(\mat{J}_{l+1})} \), for \( l<n \), because the first summation is \( g^{\mat{J}_l+1} \) and the matrices \( \mat{J}_l \) may vary even in the dyadic case from one \( l \) to another which does not happen in the one-dimensional case for the factor \( 2 \). In the following, we examine the function \( g \) further and introduce a description of its support, in order to characterize, for which cases the matrix \( \mat{J}_{l+2} \) does not affect \( \varphi_{\mat{M}_l}^{\mathcal J_{l+1,n}} \), which hence simplifies to \(\varphi_{\mathbf{M}_l}^{(\mathbf{J}_{l+1})}\). In order to do that, we look at three successive functions \( B_{\!\!\mathcal J_{k,n}} \), \( k=l,l+1,l+2 \), especially at their support. For simplification, we will first look at the case \( d=n=2 \), where \( \dMatVec{1,2}\in\{\mat{J}_{\text{X}},\mat{J}_{\text{Y}},\mat{J}_{\text{D}}\}^2 \) and discuss the general cases afterwards.
	
	For \( \vect{p}\in\bigl(-\tfrac{1}{2},\frac{1}{2}\bigr)^d \) we define the domain \( \Omega_{\vect{p}}^d \) by
		\[
		\Omega_{\vect{p}}^d := \bigtimes_{j=1}^d \Bigl[-\frac{1}{2}-p_j,\frac{1}{2}+p_j\Bigl]
		\]
	and use the short cut \( \Omega_p^d := \Omega_{p\vect{1}}^d \) for \( p\in \bigl(-\frac{1}{2},\frac{1}{2}\bigr) \). Immediately it follows

	\begin{lem}
		For any admissible function \( g \)  with \( \operatorname{supp} g \subseteq \Omega_{\vect{p}}^d \), \( \vect{p}\in\bigl[0,\tfrac{1}{2}\bigr)^d \) it holds
		\(
		g(\vect{x}) = 1
		\) for all
		\(
		\vect{x}
		\in
		\Omega_{-\vect{p}}^d
		\).
	\end{lem}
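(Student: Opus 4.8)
The plan is to read off the identity directly from the partition-of-unity property~\ref{prop:g:sum} combined with the support hypothesis \( \operatorname{supp} g \subseteq \Omega_{\vect{p}}^d \). Fix \( \vect{x}\in\Omega_{-\vect{p}}^d \), so that \( x_j\in[-\tfrac12+p_j,\tfrac12-p_j] \) for every \( j=1,\ldots,d \). By~\ref{prop:g:sum} we have \( \sum_{\vect{z}\in\mathbb Z^d}g(\vect{x}+\vect{z})=1 \), and the whole point is to show that every summand with \( \vect{z}\neq\vect{0} \) vanishes, so that this sum collapses to the single term \( g(\vect{x}) \), forcing \( g(\vect{x})=1 \).

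The key step is a coordinatewise exclusion argument. Given a nonzero \( \vect{z}\in\mathbb Z^d \), pick an index \( j \) with \( |z_j|\geq 1 \). If \( z_j\geq 1 \), then \( x_j+z_j\geq(-\tfrac12+p_j)+1=\tfrac12+p_j \); if \( z_j\leq-1 \), then \( x_j+z_j\leq(\tfrac12-p_j)-1=-\tfrac12-p_j \). For \( \vect{x} \) in the interior of \( \Omega_{-\vect{p}}^d \) these inequalities are strict, so the \( j \)-th coordinate of \( \vect{x}+\vect{z} \) strictly leaves the interval \( [-\tfrac12-p_j,\tfrac12+p_j] \). Hence \( \vect{x}+\vect{z}\notin\Omega_{\vect{p}}^d \), and since \( \operatorname{supp} g\subseteq\Omega_{\vect{p}}^d \) this gives \( g(\vect{x}+\vect{z})=0 \). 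Thus only the term \( \vect{z}=\vect{0} \) survives in~\ref{prop:g:sum}, and we conclude \( g(\vect{x})=1 \) throughout the interior of \( \Omega_{-\vect{p}}^d \).

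The one delicate point — and the only real obstacle — is the boundary of \( \Omega_{-\vect{p}}^d \). When some \( x_j \) equals exactly \( \pm(\tfrac12-p_j) \), the strict inequalities above degenerate to equalities and the translate \( \vect{x}+\vect{z} \) can land precisely on \( \partial\Omega_{\vect{p}}^d\subseteq\Omega_{\vect{p}}^d \), where \( g \) (which is only assumed nonnegative by~\ref{prop:g:pos}, not continuous) need not vanish. I would therefore establish the identity on the open box by the argument above and then recover the closed box either by invoking continuity of \( g \) where it is available (e.g.\ for the smooth box-spline choices of interest), or by observing that this measure-zero boundary set is irrelevant for the sampled Fourier coefficients in which the statement is ultimately used. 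The interior identity itself is an immediate consequence of the partition of unity, which is why the authors can state that it follows at once.
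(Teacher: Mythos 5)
Your proof is correct and is exactly the argument the paper intends: the lemma is given no proof at all in the paper (it is prefaced only by ``Immediately it follows''), and collapsing the partition-of-unity property~\ref{prop:g:sum} to the single summand \( \vect{z}=\vect{0} \) via the support hypothesis \( \operatorname{supp}g\subseteq\Omega_{\vect{p}}^d \) is the intended one-line argument.

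Your boundary caveat is moreover a genuine point that the paper glosses over, not an artifact of your write-up. Since \( \Omega_{\vect{p}}^d \) is a closed box, a translate \( \vect{x}+\vect{z} \) with \( x_j=\pm\bigl(\tfrac{1}{2}-p_j\bigr) \) and \( z_j=\mp1 \) lands exactly on \( \partial\Omega_{\vect{p}}^d \), where an admissible (possibly discontinuous) \( g \) need not vanish, and then the conclusion can literally fail: in \( d=1 \) take the trapezoid \( b_p \) and redefine \( g\bigl(\tfrac{1}{2}+p\bigr)=g\bigl(-\tfrac{1}{2}+p\bigr)=\tfrac{1}{2} \); this \( g \) is still admissible with \( \operatorname{supp}g\subseteq\Omega_{p}^1 \) (the two modified points occupy the same residue class mod \( 1 \), so \ref{prop:g:sum} survives), yet \( g\bigl(-\tfrac{1}{2}+p\bigr)=\tfrac{1}{2}\neq1 \) at a boundary point of \( \Omega_{-p}^1 \). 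So the statement holds on the interior for every admissible \( g \), and on the closed box under continuity, exactly as you say; your resolution (interior argument plus continuity for the cases of interest, with the boundary being a null set irrelevant to the sampled Fourier coefficients) is the right repair of the paper's slightly too strong formulation.
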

	\begin{figure}\centering
			\includegraphics{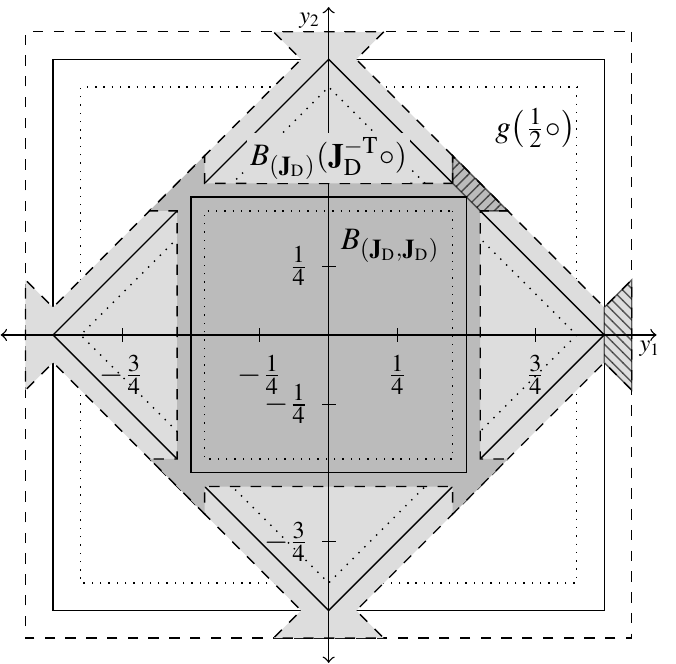}
		\caption{Supports of three successive functions \( B_{\mathcal J_{j,2}} \), \( j=1,2,3 \), where \( \mathcal J_{1,2} = (\mat{J}_{\text{D}},\mat{J}_{\text{D}}) \) and \( \operatorname{supp} g \in \Omega_{\frac{1}{20}}^2 \).}
		\label{fig:completesupport}
	\end{figure}
	A typical situation is depicted in Figure~\ref{fig:completesupport}, where we look at the support of three successive functions in the construction of nested spaces of de la Vallée Poussin type. To obtain independence from the initial matrix \( \mat{M}_0 \) we take the first of the functions \( B_{\!\!\mathcal J_{j,n}} \) unscaled and apply the matrices \( \mat{J}_l, l=k+1,\ldots,n \) to the argument of the function \( B_{\!\!\mathcal J_{j+1,n}} \). We obtain a picture where the effect of summation, i.e. \( g^{(\mat{J}_l)} \) compared to \( g \) itself, is visible for example in the two hatched regions. Here both “inner” functions inherit a support by applying the summation, that only depends on the next upper support. If the support is bigger, than at some point the most inner function would also inherit a certain support that the second inner function obtained from the outer most one by summation. Due to symmetry we restrict the following illustrations again to the first quadrant.
		
	For the rest of this section let \( g \) have the two further properties, that
		\begin{equation}\label{eq:dlVP:TPg}
			g(\vect{x}) = \prod_{i=1}^d g_i(x_i),
			\text{ and }
			\operatorname{supp} g_i \subset \Omega_{p_j}^1
			\quad \text{for } i=1,\ldots,d\text{,}
		\end{equation}
	where each \( g_i:\mathbb R\to\mathbb R\), \(i=1,\ldots,d \), is itself a function having the properties \ref{prop:g:pos} and \ref{prop:g:sum} from Definition~\ref{def:admissible} for \( d=1 \).

	Then we need two auxiliary lemmata.
	\begin{lem}\label{lem:gVereinfacht:X}
		For \( \mat{J}\in\{\mat{J}_X,\mat{J}_Y\} \) and an admissible function \( g:\mathbb R^2\to\mathbb C \) which also fulfills \eqref{eq:dlVP:TPg} and \( \operatorname{supp} g\subset\Omega_p^2 \), \( 0\leq p \leq \tfrac{1}{6} \), it holds
		\begin{equation*}
		\sumJMaskOp{g}{g}{\mat{J}} = g\text{.}
		\end{equation*}
	\end{lem}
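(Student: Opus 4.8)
The plan is to collapse the two-dimensional identity to a one-dimensional one by using the tensor-product structure \eqref{eq:dlVP:TPg} together with the partition-of-unity property \ref{prop:g:sum}, and then to settle the one-dimensional identity by a support analysis in which the threshold $p=\tfrac16$ emerges from a single sharp inequality.

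First I would treat $\mat{J}=\mat{J}_X$, the case $\mat{J}_Y$ following verbatim after interchanging the two coordinates. Since $\mat{J}_X^{-\T}\vect{x}=(x_1/2,x_2)^\T$ and $\mat{J}_X^\T\vect{z}=(2z_1,z_2)^\T$, writing $g(\vect{x})=g_1(x_1)g_2(x_2)$ and summing the second factor over $z_2$ gives, by \ref{prop:g:sum}, $\sum_{z_2\in\mathbb Z}g_2(x_2-z_2)=1$. Hence $g^{\mat{J}_X}(\vect{x})=g_1^{(2)}(x_1)$ with the abbreviation $g_1^{(2)}(t):=\sum_{z\in\mathbb Z}g_1(t-2z)$, and
\[
	\sumJMaskOp{g}{g}{\mat{J}_X}(\vect{x})
	= g_1^{(2)}(x_1)\,g_1(x_1/2)\,g_2(x_2),
\]
so that the claim reduces to the one-dimensional identity $g_1^{(2)}(t)\,g_1(t/2)=g_1(t)$ for all $t\in\mathbb R$, where $\operatorname{supp}g_1\subseteq[-\tfrac12-p,\tfrac12+p]$, $p\le\tfrac16$, and $g_1$ satisfies the one-dimensional analogues of \ref{prop:g:pos} and \ref{prop:g:sum}.

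I would prove this identity by distinguishing whether $t\in\operatorname{supp}g_1$. On $\operatorname{supp}g_1\subseteq[-\tfrac23,\tfrac23]\subseteq[-1,1]$ the translates $g_1(\cdot-2z)$ with $z\neq0$ are supported in $[\tfrac32-p,\tfrac52+p]$ and its reflection; these stay disjoint from $[-1,1]$ precisely because $\tfrac32-p\ge\tfrac43>1$, whence $g_1^{(2)}(t)=g_1(t)$ there. Moreover $t\in\operatorname{supp}g_1$ forces $t/2\in[-\tfrac14-\tfrac p2,\tfrac14+\tfrac p2]$, and the decisive inequality $\tfrac14+\tfrac p2\le\tfrac12-p$, which is equivalent to $p\le\tfrac16$, places $t/2$ in the plateau $[-\tfrac12+p,\tfrac12-p]$, so $g_1(t/2)=1$ by the preceding Lemma; the product then equals $g_1(t)$. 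Off $\operatorname{supp}g_1$ the right-hand side vanishes, and both factors can be simultaneously nonzero only if $t\in\bigl(\operatorname{supp}g_1+2\mathbb Z\bigr)\cap[-1-2p,1+2p]$; for $p<\tfrac16$ the only translate of $\operatorname{supp}g_1$ meeting $[-1-2p,1+2p]$ is $\operatorname{supp}g_1$ itself, since $\tfrac32-p>1+2p$, so the product is $0$ as required.

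The main obstacle is the boundary case $p=\tfrac16$, in which $\operatorname{supp}g_1+2$ just touches $[-1-2p,1+2p]$ at the single point $t=\tfrac43$ (and its reflection at $-\tfrac43$). There the product collapses to $g_1(-\tfrac23)\,g_1(\tfrac23)$, a product of the values of $g_1$ at the two endpoints of its support; I would dispose of it either through strictness of the support inclusion or, for continuous $g$, by observing that continuity forces $g_1$ to vanish at the boundary of its support, so the product is again $0$. The remainder is the routine check that the two reductions are compatible and that the choice $\mat{J}=\mat{J}_Y$ merely exchanges the roles of $g_1$ and $g_2$.
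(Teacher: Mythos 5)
Your proof is correct and follows essentially the same route as the paper's own: both exploit the tensor-product structure \eqref{eq:dlVP:TPg} so that the \(z_2\)-sum collapses to \(1\) by \ref{prop:g:sum}, reducing the claim to the first coordinate, and both use the same two consequences of \(p\le\tfrac16\) --- the translates \(\operatorname{supp}g_1+2z\), \(z\neq0\), stay clear of the dilated support \([-1-2p,1+2p]\), and \(t/2\) lands in the plateau \([-\tfrac12+p,\tfrac12-p]\) where \(g_1\equiv1\) by the preceding lemma --- your clean reduction to the one-dimensional identity \(g_1^{(2)}(t)\,g_1(t/2)=g_1(t)\) being merely a tidier packaging of the paper's region-by-region analysis in \(x_1\) and \(x_2\). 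Your explicit handling of the touching case \(p=\tfrac16\) (the single points \(t=\pm\tfrac43\), where the product degenerates to the endpoint values \(g_1(-\tfrac23)\,g_1(\tfrac23)\)) is in fact slightly more careful than the paper, whose proof asserts the empty-intersection criterion for all \(p\le\tfrac16\) although it holds strictly only for \(p<\tfrac16\).
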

	\begin{proof}
		The proof is given for \( \mat{J} = \mat{J}_X \), but can be obtained by the same arguments also for \( \mat{J}=\mat{J}_Y \).

		By assumptions, we have \(\operatorname{supp} g \subset \Omega_p^2\), \(\operatorname{supp} g(\mat{J}_X^{-\T}\circ) \subset
		\mat{J}_X^\T\Omega_p^2\). For \( \vect{z}=(0,z_2)^\T\in\mathbb Z^2\), \( z_1=0 \), we have \( \vect{z}+\Omega_p^2\cap\mat{J}_X\Omega_p^2\neq \emptyset  \) holds for \( 0\leq p < \frac{1}{2} \) if and only if \( z_2\in\{-1,0,1\} \).
		For \( z_2\in\{-1,0,1\} \), the statements \( \forall z_1\in\mathbb Z\backslash\{0\}: (\mat{J}_X^\T\vect{z}+\Omega_p^2)\cap\mat{J}_X\Omega_p^2\neq \emptyset \) and \( p\leq \tfrac{1}{6} \) are equivalent.
		Hence for \( \vect{x}\in\Omega_{(p,-p)^\T}^2\subset \mat{J}_X\Omega_{(p,-p)^\T}^2\) we obtain using  property~\ref{prop:g:sum}
		\[
			\sum_{\vect{z}\in\mathbb Z^2}g(\vect{x}+\mat{J}_X^\T\vect{z}) = g(\vect{x}) = g(\mat{J}_X^{-\T}\vect{x})=1
			\text{.}
		\]
		For \( \vect{x}\in\Omega_{p}^2\), \( |x_2|>\tfrac{1}{2}-p \), the sum over \( \vect{z}\in\mathbb Z^d \) covers a second nonzero index despite \( \vect{z}=\vect{0} \): \( z_2=1 \) for \( x_2<-\tfrac{1}{2}+p \) and \( z_2=-1 \) for \( x_2>\tfrac{1}{2}+p \). Then, we have two cases: Due to \( |x_1|<\tfrac{1}{2}-p\) together with \ref{prop:g:sum} of \( g \) the sum is \( 1 \) and hence
		\[
			\Bigl(\sum_{\vect{z}\in\mathbb Z^2}g(\vect{x}+\mat{J}_X^\T\vect{z})\Bigr)g(\mat{J}_X^{-\T}\vect{x}) = g(\mat{J}_X^{-\T}\vect{x}) = g(\vect{x})\text{.}
		\]
		Further, for \( \tfrac{1}{2}-p\leq |x_1| \leq \tfrac{1}{2}+p \), the summation does not simplify to \( 1 \) due to the dilation caused by \( \mat{J}_X^{-\T} \). It holds
		\begin{align*}
			\Bigl(\sum_{\vect{z}\in\mathbb Z^2}
			g(\vect{x}+\mat{J}_X^\T\vect{z})\Bigr)g(\mat{J}_X^{-\T}\vect{x})
			&= g_1(x_1)\Bigl(g_2(x_2)+g_2(x_2\pm1)\Bigr)g_1\Bigl(\frac{x_1}{2}\Bigr)g_2(x_2)\\
			& = g_1(x_1)g_2(x_2) = g(\vect{x})
			\text{.}
			\qedhere
		\end{align*}
	\end{proof}
	\begin{lem}\label{lem:gSubset:J}
		For \( 0\leq p \leq \tfrac{1}{6}\) let a function \( g:\mathbb R^2\to\mathbb R \) be given as in~\eqref{eq:dlVP:TPg}, having \( \operatorname{supp} g\subset\Omega_p^2 \). Then it holds for \( \mat{J}\in \{\mat{J}_X, \mat{J}_Y,\mat{J}_D\} \), that
		\begin{equation}\label{eq:equal:g:forms1}
			\sumJMaskOp[\big]{g}{  \sumJMaskOp{g}{g}{\mat{J}_D} }{\mat{J}}
			= 
			\sumJMaskOp{g}{g}{\mat{J}}
		\end{equation}
		if and only if
		\begin{equation}\label{eq:subsets:g}
			\Biggl(
			\bigcup_{%
				\vect{z}\in\mathbb Z^2
				} \Omega_p^2+\mat{J}^\T\vect{z}
			\Biggr) \cap \mat{J}^\T\Omega_p^2 \ \ \subset\ \ \mat{J}^\T\mat{J}_D^\T\Omega_{-p}^2\text{.}
		\end{equation}
	\end{lem}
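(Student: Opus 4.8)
The plan is to reduce the functional identity~\eqref{eq:equal:g:forms1} to a pointwise statement about \sumJMaskOp{g}{g}{\mat{J}_D} and $g$, and then to recognise~\eqref{eq:subsets:g} as a support inclusion. First I would expand both operators by $\sumJMaskOp{f_1}{f_2}{\mat{J}}(\vect{x}) = f_1^{\mat{J}}(\vect{x})\,f_2(\mat{J}^{-\T}\vect{x})$, turning the two sides of~\eqref{eq:equal:g:forms1} into
\[
g^{\mat{J}}(\vect{x})\,g^{\mat{J}_D}(\mat{J}^{-\T}\vect{x})\,g\bigl((\mat{J}\mat{J}_D)^{-\T}\vect{x}\bigr)
\quad\text{and}\quad
g^{\mat{J}}(\vect{x})\,g(\mat{J}^{-\T}\vect{x}).
\]
Since $g\ge 0$ forces $g^{\mat{J}}\ge 0$, the common factor may be cancelled where it is positive while both sides vanish where it is zero, so~\eqref{eq:equal:g:forms1} is equivalent to
\[
\sumJMaskOp{g}{g}{\mat{J}_D}(\vect{y}) = g(\vect{y})
\quad\text{for all }\vect{y}=\mat{J}^{-\T}\vect{x}\text{ with }g^{\mat{J}}(\vect{x})>0.
\]
Multiplying the three sets in~\eqref{eq:subsets:g} by $\mat{J}^{-\T}$ turns $\bigcup_{\vect{z}}(\Omega_p^2+\mat{J}^\T\vect{z})$ into (the closure of) the admissible region $R$ of such $\vect{y}$, turns $\mat{J}^\T\Omega_p^2$ into the support box $\Omega_p^2$ of $g$, and turns $\mat{J}^\T\mat{J}_D^\T\Omega_{-p}^2$ into $\mat{J}_D^\T\Omega_{-p}^2$, the set on which $g(\mat{J}_D^{-\T}\circ)\equiv 1$. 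Thus~\eqref{eq:subsets:g} reads $R\cap\Omega_p^2\subseteq\mat{J}_D^\T\Omega_{-p}^2$, and the outer matrix $\mat{J}$ enters only through the shape of $R$.

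Next I would pin down where \sumJMaskOp{g}{g}{\mat{J}_D} agrees with $g$. On the inner diamond $\mat{J}_D^\T\Omega_{-p}^2=\{\,|y_1|+|y_2|\le 1-2p\,\}$ the argument $\mat{J}_D^{-\T}\vect{y}$ lies in $\Omega_{-p}^2$, so the lemma preceding~\eqref{eq:dlVP:TPg} gives $g(\mat{J}_D^{-\T}\vect{y})=1$ and the sheared factor is harmless. It then remains to show $g^{\mat{J}_D}(\vect{y})=g(\vect{y})$ there, i.e.\ that every translate $g(\vect{y}-\mat{J}_D^\T\vect{z})$ with $\vect{z}\ne\vect{0}$ vanishes. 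Since the nonzero lattice vectors of $\mat{J}_D^\T\mathbb{Z}^2$ nearest the origin are $(\pm1,\pm1)$, a nonvanishing such term would force $\vect{y}-\mat{J}_D^\T\vect{z}\in\Omega_p^2$ and hence $|y_1|+|y_2|\ge 1-2p$, contradicting the interior of the diamond (only $p<\tfrac12$ is needed here; the sharper bound $p\le\tfrac16$ enters the outer estimate below, exactly as in Lemma~\ref{lem:gVereinfacht:X}). Hence \sumJMaskOp{g}{g}{\mat{J}_D} agrees with $g$ on $\mat{J}_D^\T\Omega_{-p}^2$.

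For the forward implication I would assume~\eqref{eq:subsets:g}. If $\vect{y}\in R$ has $g(\vect{y})\ne 0$ then $\vect{y}\in R\cap\Omega_p^2\subseteq\mat{J}_D^\T\Omega_{-p}^2$, and the previous paragraph gives $\sumJMaskOp{g}{g}{\mat{J}_D}(\vect{y})=g(\vect{y})$. For the points of $R$ with $g(\vect{y})=0$ I must still verify $\sumJMaskOp{g}{g}{\mat{J}_D}(\vect{y})=0$: a nonzero value would require both $g(\mat{J}_D^{-\T}\vect{y})\ne 0$, i.e.\ $\vect{y}\in\mat{J}_D^\T\operatorname{supp} g$, and a surviving $\vect{z}\ne\vect{0}$ term in $g^{\mat{J}_D}(\vect{y})$, and the $p\le\tfrac16$ geometry of $\bigcup_{\vect{z}}(\Omega_p^2+\mat{J}^\T\vect{z})$ is precisely what forbids these to co-occur inside $R$ for each $\mat{J}\in\{\mat{J}_X,\mat{J}_Y,\mat{J}_D\}$. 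Conversely, if~\eqref{eq:subsets:g} fails there is a point of $R\cap\Omega_p^2$ lying outside $\mat{J}_D^\T\Omega_{-p}^2$; at such a point either $g(\mat{J}_D^{-\T}\circ)$ has dropped below $1$ or $g^{\mat{J}_D}$ has acquired a second coset contribution, which one exhibits through the splitting $1=g^{\mat{J}_D}(\vect{y})+g^{\mat{J}_D}(\vect{y}-\vect{v})$ coming from property~\ref{prop:g:sum}, where $\vect{v}$ represents the nontrivial coset of $\mathbb{Z}^2/\mat{J}_D^\T\mathbb{Z}^2$. In either case $\sumJMaskOp{g}{g}{\mat{J}_D}(\vect{y})\ne g(\vect{y})$ and~\eqref{eq:equal:g:forms1} is violated.

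The step I expect to be the main obstacle is the support bookkeeping on the zero set of $g$ inside $R$: one must show simultaneously that no spurious mass of \sumJMaskOp{g}{g}{\mat{J}_D} survives there, and that off the inner diamond the partial sum and the sheared plateau genuinely break the identity. Both rely on the explicit shapes of the $\Omega$-boxes under $\mat{J}_X,\mat{J}_Y,\mat{J}_D$ and on the sharp constant $p\le\tfrac16$, so I would organise them by treating the three outer matrices separately and reusing the overlap computation already carried out in Lemma~\ref{lem:gVereinfacht:X}.
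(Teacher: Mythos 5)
Your proposal takes a genuinely different route from the paper. The paper never cancels the common factor \(g^{\mat{J}}\); its key step is the re-summation identity~\eqref{eq:simplify:g}, rewriting the left-hand side of~\eqref{eq:equal:g:forms1} as
\(\bigl(\sum_{\vect{y}\in\mathbb Z^2}\sumJMaskOp[\big]{g}{g}{\mat{J}}(\circ+\mat{J}^\T\mat{J}_D^\T\vect{y})\bigr)\,g(\mat{J}_D^{-\T}\mat{J}^{-\T}\circ)\),
i.e.\ as a sum of shifts of \(\sumJMaskOp{g}{g}{\mat{J}}\) over the coarser lattice \(\mat{J}^\T\mat{J}_D^\T\mathbb Z^2\), and then arguing which summands survive and where the outer plateau factor equals \(1\). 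Your cancellation of \(g^{\mat{J}}\geq 0\) and the reduction to the pointwise identity \(\sumJMaskOp{g}{g}{\mat{J}_D}=g\) on \(R=\mat{J}^{-\T}\{g^{\mat{J}}>0\}\) is a sound and arguably more transparent reformulation, and your diamond computation showing \(\sumJMaskOp{g}{g}{\mat{J}_D}=g\) on \(\mat{J}_D^\T\Omega_{-p}^2=\{\lvert y_1\rvert+\lvert y_2\rvert\leq 1-2p\}\) is correct (at boundary points the relevant translates land on \(\partial\Omega_p^2\), where \(g_i(\pm(\tfrac12+p))=0\) follows from~\ref{prop:g:sum} together with \(g_i\equiv 1\) on \(\Omega_{-p}^1\)).

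There is, however, one genuine gap, and it sits exactly where you flag the ``main obstacle'': your claim that on the zero set of \(g\) inside \(R\) ``the \(p\leq\tfrac16\) geometry of \(\bigcup_{\vect{z}}(\Omega_p^2+\mat{J}^\T\vect{z})\) is precisely what forbids'' spurious mass of \(\sumJMaskOp{g}{g}{\mat{J}_D}\). This cannot be a consequence of \(p\leq\tfrac16\) alone: by the very equivalence you are proving, for \(\mat{J}=\mat{J}_X\) and \(\tfrac{1}{14}<p\leq\tfrac16\) (cf.\ the threshold in Theorem~\ref{thm:dlVP:unabhaengig}) the inclusion~\eqref{eq:subsets:g} fails and hence~\eqref{eq:equal:g:forms1} genuinely fails, so in that regime the ``forbidden'' configurations do occur inside \(R\). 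The step must invoke the hypothesis~\eqref{eq:subsets:g} itself. It can be repaired cleanly and uniformly in \(\mat{J}\) (no case analysis over the three matrices is needed, contrary to your closing plan): since \(g^{\mat{J}}\) is \(\mat{J}^\T\mathbb Z^2\)-periodic, \(R\) is \(\mathbb Z^2\)-periodic; so if \(g(\vect{y})=0\), \(\vect{y}\in R\), and \(g(\vect{y}-\mat{J}_D^\T\vect{z})\neq 0\) for some \(\vect{z}\neq\vect{0}\), then \(\vect{y}-\mat{J}_D^\T\vect{z}\in R\cap\Omega_p^2\subseteq\mat{J}_D^\T\Omega_{-p}^2\) by the \(\mat{J}^{-\T}\)-transformed~\eqref{eq:subsets:g}, and since \(\lvert(\mat{J}_D^\T\vect{z})_1\rvert+\lvert(\mat{J}_D^\T\vect{z})_2\rvert\geq 2\) this forces \(\lvert y_1\rvert+\lvert y_2\rvert\geq 1+2p\), i.e.\ \(\mat{J}_D^{-\T}\vect{y}\notin\Omega_p^2\) except on \(\partial\Omega_p^2\), where \(g\) vanishes; hence \(\sumJMaskOp{g}{g}{\mat{J}_D}(\vect{y})=0\) after all. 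With this inserted, your forward direction is complete. Your converse sketch (plateau drop or second coset contribution via the splitting \(g^{\mat{J}_D}(\circ)+g^{\mat{J}_D}(\circ-\vect{v})=1\)) is at the same level of detail as the paper's own converse and shares with it the implicit identification of \(\{g(\mat{J}_D^{-\T}\circ)=1\}\) with \(\mat{J}_D^\T\Omega_{-p}^2\), so I do not count that against you.
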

	\begin{proof}
			Writing the operator on the left hand side of~\eqref{eq:equal:g:forms1} we obtain
			\[
			\sumJMaskOp[\big]{g}{    \sumJMaskOp{g}{g}{\mat{J}_D}    }{\mat{J}} = 
			\Bigl(
				\sum_{\vect{z}\in\mathbb Z^2} g(\circ + \mat{J}^\T\vect{z})
			\Bigr)
			\Bigl(
				\sum_{\vect{y}\in\mathbb Z^2} g(\mat{J}^{-\T}\circ + \mat{J}_D^\T\vect{y})
			\Bigr)
			g(\mat{J}_D^{-\T}\mat{J}^{-\T}\circ)\text{,}
			\]
			where both sums are absolutely summable and we may shift the first sum by any \( \mat{J}_D^\T\vect{y}\), \(\vect{y}\in\mathbb Z^2 \), to obtain
			\begin{equation}\label{eq:simplify:g}
				\begin{split}
					\sumJMaskOp[\big]{g}{\sumJMaskOp{g}{g}{\mat{J}_D}    }{\mat{J}}
					&= 
					\Biggl(
						\sum_{\vect{y}\in\mathbb Z^2}
							\Bigl(
								\sum_{\vect{z}\in\mathbb Z^2}g\bigl(\circ + \mat{J}^\T(\vect{z}+\mat{J}_D^\T\vect{y})\bigr)
							\Bigr)
				 			g\bigl(\mat{J}^{-\T}(\circ + \mat{J}^\T\mat{J}_D^\T\vect{y})\bigr)
					\Biggr)
					\\
					&\quad\times g(\mat{J}_D^{-\T}\mat{J}^{-\T}\circ)
					\\[.5\baselineskip]
					&=
					\Biggl(
						\sum_{\vect{y}\in\mathbb Z^2}
						\sumJMaskOp[\big]{g}{g}{\mat{J}}(\circ+\mat{J}^\T\mat{J}_D^\T\vect{y})
					\Biggr)
					g(\mat{J}_D^{-\T}\mat{J}^{-\T}\circ)\text{.}
				\end{split}
			\end{equation}
		The support of the summand \( \vect{y} = \vect{0} \) in the sum is given by
		\[
		\bigcup_{%
		\vect{z}\in\mathbb Z^2
		} \Omega_p^2+\mat{J}^\T\vect{z}\text{.}
		\]
		Let~\eqref{eq:subsets:g} be given. Assume, there exists a value \( \vect{y}\in \mathbb Z^d\backslash\{\vect{0}\} \), such that
		\begin{equation}\label{eq:lem:proof:y-cut}
			\Biggl(\Bigl(
					\mat{J}^\T\mat{J}_D^\T\vect{y} +  \bigcup_{\vect{z}\in\mathbb Z^2} \Omega_p^2+\mat{J}^\T\vect{z}
			\Bigr) \cap \mat{J}^\T\Omega_p^2\Biggr) \ \ \cap\ \ \mat{J}^\T\mat{J}_D^\T\Omega_{-p}^2
		\end{equation}
		is nonempty. Due to \(p\leq \frac{1}{6}\), there exists a point \(\vect{x}\in \mat{J}^\T\mat{J}_D^\T\Bigl(\vect{y}+\Omega_p^2\backslash\Omega_{-p}^2\Bigr) \) in the left intersection. Furthermore \(\mat{J}^\T\mat{J}_D^\T\vect{y}+\mat{J}^\T\mat{J}_D^\T\Omega_p^2\backslash\Omega_{-p}^2\) is not a subset of \(\mat{J}^\T\mat{J}^\T_D\Omega_{-p}^2\) and hence the point \(\vect{x}\) that exists by assumption contradicts~\eqref{eq:subsets:g}. Hence the last summation in~\eqref{eq:simplify:g} simplifies to the summand \( \vect{y}=\vect{0} \). For \( \vect{x}\in\mat{J}^\T\mat{J}_D^\T\Omega_{-p}^2 \) we have the equality \( g(\mat{J}^{-\T}_D\mat{J}^{-\T}\vect{x}) = 1 \). In total, we obtain \( \sumJMaskOp{g}{g}{\mat{J}}(\circ)g(\mat{J}_D^{-\T}\mat{J}^{-\T}\circ) = \sumJMaskOp{g}{g}{\mat{J}} \) and from~\eqref{eq:subsets:g} follows~\eqref{eq:equal:g:forms1}.

		Let~\eqref{eq:equal:g:forms1} be given. Then, the steps from the last paragraph can be applied in reverse order: The equality \( \sumJMaskOp[\big]{g}{    \sumJMaskOp{g}{g}{\mat{J}_D}    }{\mat{J}} =\sumJMaskOp{g}{g}{\mat{J}} \) is equivalent to \( \sumJMaskOp{g}{g}{\mat{J}}(\circ)g(\mat{J}_D^{-\T}\mat{J}^{-\T}\circ) = \sumJMaskOp{g}{g}{\mat{J}} \). For \( \vect{y}\neq \vect{0} \) the sets in~\eqref{eq:lem:proof:y-cut} are empty. For \( \vect{y}=\vect{0} \) we also obtain from~\eqref{eq:lem:proof:y-cut}, that for
		\[
		\vect{x} \in 
		\Biggl(\bigcup_{\vect{z},\ \|\vect{z}\|_{\infty}\leq1} \Omega_p^2+\mat{J}^\T\vect{z}
		\Biggr) \cap \mat{J}^\T\Omega_p^2\quad\text{it holds}\quad g(\mat{J}^{-\T}_D\mat{J}^{-\T}\vect{x})=1 \text{,} \]
	which completes the proof of the equivalence of~\eqref{eq:equal:g:forms1} and~\eqref{eq:subsets:g}.
		\end{proof}
	\begin{thm}
			\label{thm:dlVP:unabhaengig}
		For \( 0\leq p\leq \tfrac{1}{14} \) let an admissible function \( g:\mathbb R^2\to\mathbb R \) be given as in~\eqref{eq:dlVP:TPg}, having \( \operatorname{supp} g\subset\Omega_p^2 \). Let \( \mat{M}_0\in\mathbb Z^{2\times 2} \) be a regular matrix and \( \dMatVec{1,n}\in \{\mat{J}_X, \mat{J}_Y, \mat{J}_D\}^n \), \( n\in \mathbb N \), be a matrix vector. Then, the following equalities hold for the corresponding scaling functions \( \varphi_{\mat{M}_l}^{\dMatVec{l+1,n}} \) of de la Vallée Poussin type, \( l=0,\ldots,n \). 
		\begin{enumerate}[label=\alph*)]
			\item\label{thm:dlVP:unabhaengig:Skalierung} If \( \mat{J}_n \in\{\mat{J}_X,\mat{J}_Y\} \), then 
				\(
				\varphi_{\mat{M}_{n-1}}^{\dMatVec{n,n}} = \varphi_{\mat{M}_{n-1}}^{\emptyset}\text{,}
				\)
				\item\label{thm:dlVP:unabhaengig:Rotation} If \( \mat{J}_n = \mat{J}_D \), then
				\(
				\varphi_{\mat{M}_{n-2}}^{\dMatVec{n-1,n}} = \varphi_{\mat{M}_{n-2}}^{\dMatVec{n-1,n-1}}\text{.}
				\)
		\end{enumerate}
	\end{thm}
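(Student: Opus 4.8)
The plan is to unfold the recursive definition of $B_{\!\!\dMatVec{l,k}}$ and reduce each claimed identity of scaling functions to an identity of the underlying $B$-functions, which is then supplied by the two auxiliary lemmata. First I would record the general principle: since $c_{\vect{k}}(\varphi_{\mat{M}_l}^{\dMatVec{l+1,n}}) = \tfrac{1}{\sqrt{m_l}}B_{\!\!\dMatVec{l+1,n}}(\mat{M}_l^{-\T}\vect{k})$ for all $\vect{k}\in\mathbb Z^2$, two such scaling functions sharing the lattice $\mat{M}_l$ agree as soon as their $B$-functions agree on $\mathbb R^2$. Hence it suffices to prove the corresponding function identities, and the matrix $\mat{M}_0$ and the leading matrices play no role.

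For part~\ref{thm:dlVP:unabhaengig:Skalierung}, unfolding the recursion at level $l=n-1$ gives $B_{\!\!\dMatVec{n,n}}=\sumJMaskOp{g}{g}{\mat{J}_n}$, whereas the empty superscript corresponds to $B_{\emptyset}=g$. As $\mat{J}_n\in\{\mat{J}_X,\mat{J}_Y\}$ and $p\le\tfrac{1}{14}\le\tfrac16$, all hypotheses of Lemma~\ref{lem:gVereinfacht:X} are met, so $\sumJMaskOp{g}{g}{\mat{J}_n}=g$ on all of $\mathbb R^2$. The two $B$-functions therefore coincide, and so do the scaling functions.

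For part~\ref{thm:dlVP:unabhaengig:Rotation}, unfolding the recursion at level $l=n-2$ with $\mat{J}_n=\mat{J}_D$ yields
\[
B_{\!\!\dMatVec{n-1,n}}=\sumJMaskOp[\big]{g}{\sumJMaskOp{g}{g}{\mat{J}_D}}{\mat{J}_{n-1}}
\qquad\text{and}\qquad
B_{\!\!\dMatVec{n-1,n-1}}=\sumJMaskOp{g}{g}{\mat{J}_{n-1}},
\]
so the asserted identity $\varphi_{\mat{M}_{n-2}}^{\dMatVec{n-1,n}}=\varphi_{\mat{M}_{n-2}}^{\dMatVec{n-1,n-1}}$ is exactly~\eqref{eq:equal:g:forms1} with $\mat{J}=\mat{J}_{n-1}$. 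By Lemma~\ref{lem:gSubset:J} this holds if and only if the support inclusion~\eqref{eq:subsets:g} is satisfied, so it remains only to check~\eqref{eq:subsets:g} for each admissible choice $\mat{J}_{n-1}\in\{\mat{J}_X,\mat{J}_Y,\mat{J}_D\}$.

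This last verification is the geometric heart of the argument and the step I expect to be the main obstacle. Applying $\mat{J}^{-\T}$ to both sides of~\eqref{eq:subsets:g} cancels the common factor $\mat{J}^\T$ and reduces the claim to
\[
\Biggl(\bigcup_{\vect{z}\in\mathbb Z^2}\mat{J}^{-\T}\Omega_p^2+\vect{z}\Biggr)\cap\Omega_p^2\ \subset\ \mat{J}_D^\T\Omega_{-p}^2,
\]
where a direct computation gives the diamond $\mat{J}_D^\T\Omega_{-p}^2=\{\vect{x}:|x_1|+|x_2|\le 1-2p\}$. For $\mat{J}\in\{\mat{J}_X,\mat{J}_Y\}$ the set $\mat{J}^{-\T}\Omega_p^2$ is an axis-aligned rectangle of half-widths $(\tfrac14+\tfrac p2,\tfrac12+p)$ (respectively its transpose), and the point of the intersection of largest $\ell_1$-norm is a corner of this rectangle, where $|x_1|+|x_2|=\tfrac34+\tfrac{3p}2$; the inclusion then amounts to $\tfrac34+\tfrac{3p}2\le 1-2p$, i.e.\ $p\le\tfrac1{14}$. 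For $\mat{J}=\mat{J}_D$ the set $\mat{J}_D^{-\T}\Omega_p^2$ is the diamond $\{|x_1|+|x_2|\le\tfrac12+p\}$, the extremal point is contributed by an edge-adjacent translate, and the inclusion only requires $p\le\tfrac1{10}$, once one checks that the corner translates miss $\Omega_p^2$ altogether for $p<\tfrac16$. The delicate point is precisely this case analysis: for each matrix one must identify which lattice translate of $\mat{J}^{-\T}\Omega_p^2$ furnishes the point of the intersection farthest out in $\ell_1$-norm, since this worst case fixes the threshold, the binding one being $\tfrac1{14}$ attained for $\mat{J}_X$ and $\mat{J}_Y$. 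As $p\le\tfrac1{14}$ satisfies all three conditions simultaneously,~\eqref{eq:subsets:g} holds in every case and part~\ref{thm:dlVP:unabhaengig:Rotation} follows.
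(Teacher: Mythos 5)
Your proposal is correct, and part~\ref{thm:dlVP:unabhaengig:Skalierung} coincides with the paper's own argument (both apply Lemma~\ref{lem:gVereinfacht:X} with \( p\leq\tfrac1{14}\leq\tfrac16 \)); for part~\ref{thm:dlVP:unabhaengig:Rotation}, however, you take a genuinely more uniform route. The paper splits on \( \mat{J}_{n-1} \): for \( \mat{J}_{n-1}\in\{\mat{J}_X,\mat{J}_Y\} \) it tracks the support of \( \sumJMaskOp{g}{g}{\mat{J}_D}(\mat{J}_X^{-\T}\circ) \) inside \( \Omega_{(6p,3p)}^2 \), shows that the critical strip \( [1-6p,1+6p]\times\bigl[\tfrac12-p,\tfrac12+p\bigr] \) misses \( g^{\mat{J}_X} \) precisely when \( s=1-6p-\bigl(\tfrac12+p\bigr)\geq 0 \), i.e.\ \( p\leq\tfrac1{14} \), and then invokes Lemma~\ref{lem:gVereinfacht:X} (thereby even obtaining the stronger conclusion \( \varphi_{\mat{M}_{n-2}}^{\dMatVec{n-1,n}}=\varphi_{\mat{M}_{n-2}}^{\emptyset} \) in that case, which the theorem does not require); only for \( \mat{J}_{n-1}=\mat{J}_D \) does it use Lemma~\ref{lem:gSubset:J}, via the corner point \( \vect{q}=\bigl(\tfrac12-p,\tfrac12+3p\bigr)^\T \) and the threshold \( p\leq\tfrac1{10} \). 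You instead funnel all three cases through the equivalence of Lemma~\ref{lem:gSubset:J} and verify~\eqref{eq:subsets:g} directly after applying \( \mat{J}^{-\T} \), which is legitimate since that lemma is stated for all \( \mat{J}\in\{\mat{J}_X,\mat{J}_Y,\mat{J}_D\} \), and your \( \ell_1 \)-geometry checks out: \( \mat{J}_D^\T\Omega_{-p}^2=\{\vect{x}:|x_1|+|x_2|\leq 1-2p\} \), the extremal corner value \( \tfrac34+\tfrac{3p}2 \) for the scaling matrices reproduces the binding threshold \( p\leq\tfrac1{14} \), and the edge-translate value \( \tfrac12+3p \) for \( \mat{J}_D \) reproduces \( p\leq\tfrac1{10} \), matching the paper's numbers exactly. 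Your route buys a single criterion with a transparent worst-case analysis over lattice translates; the paper's route buys the additional identity \( \varphi_{\mat{M}_{n-2}}^{\dMatVec{n-1,n}}=\varphi_{\mat{M}_{n-2}}^{\emptyset} \) in the shear/scaling subcase. One small point you should make explicit: for \( \mat{J}\in\{\mat{J}_X,\mat{J}_Y\} \) you assert the worst case is a corner of the central rectangle, but, as you did for \( \mat{J}_D \), you should record that the translates \( \vect{z}=(\pm1,0) \) and \( \vect{z}=(\pm1,\pm1) \) miss \( \Omega_p^2 \) altogether for \( p<\tfrac16 \) (since \( \tfrac34-\tfrac p2>\tfrac12+p \) there), so that only \( \vect{z}=(0,0),(0,\pm1) \) contribute, both attaining \( \tfrac34+\tfrac{3p}2 \); with that one-line check added, your case analysis is complete.
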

	\begin{proof}
		Applying Lemma~\ref{lem:gVereinfacht:X}
		for \( \mat{J}_n\in\{\mat{J}_X,\mat{J}_Y\} \) to any scaling function from Definition \ref{def:dlVP} yields for their Fourier coefficients \( c_{\vect{k}}(\varphi_{\mat{M}_{n-1}}^{\dMatVec{n,n}})\), \(\vect{k}\in\mathbb Z^2 \), that
		\begin{equation*}
		\begin{split}
			c_{\vect{k}}(\varphi_{\mat{M}_{n-1}}^{\dMatVec{n,n}})
			&=	\frac{1}{\sqrt{m_{n-1}}}\sumJMaskOp{g}{g}{\mat{J}_n}(\mat{M}_{n-1}^{-\T}\vect{k})
			&=	\frac{1}{\sqrt{m_{n-1}}}g(\mat{M}_{n-1}^{-\T}\vect{k})
			&=	c_{\vect{k}}(\varphi_{\mat{M}_{n-1}}^{\emptyset})\text{.}
		\end{split}
		\end{equation*}
		For \( \mat{J}_n=\mat{J}_D \), the Fourier coefficients are given by
		\begin{align*}
			c_{\vect{k}}(\varphi_{\mat{M}_{n-1}}^{\dMatVec{n,n}})
			&=	\frac{1}{\sqrt{m_{n-1}}}\sumJMaskOp{g}{g}{\mat{J}_D}(\mat{M}_{n-1}^{-\T}\vect{k}),\quad\vect{k}\in\mathbb Z^2,
			\intertext{ and }
			c_{\vect{k}}(\varphi_{\mat{M}_{n-2}}^{\dMatVec{n-1,n}})
			&=	\frac{1}{\sqrt{m_{n-1}}}\sumJMaskOp[\bigl]{g}{   \sumJMaskOp{g}{g}{\mat{J}_D} }{\mat{J}_{n-1}}
				(\mat{M}_{n-2}^{-\T}\vect{k}),\quad\vect{k}\in\mathbb Z^2\text{.}
		\end{align*}
		 Due to symmetry, the next steps are only described in the first quadrant, cf. Figures \ref{subfig:SFSup20:XD} and \subref{subfig:SFSup20:DD}, and we again omit the case \( \mat{J}_{n-1}=\mat{J}_{\text{Y}} \).
		\begin{figure}[tbp]
			\setlength{\belowcaptionskip}{1.5\baselineskip}
				\begin{subfigure}[b]{\textwidth}\centering
					\includegraphics{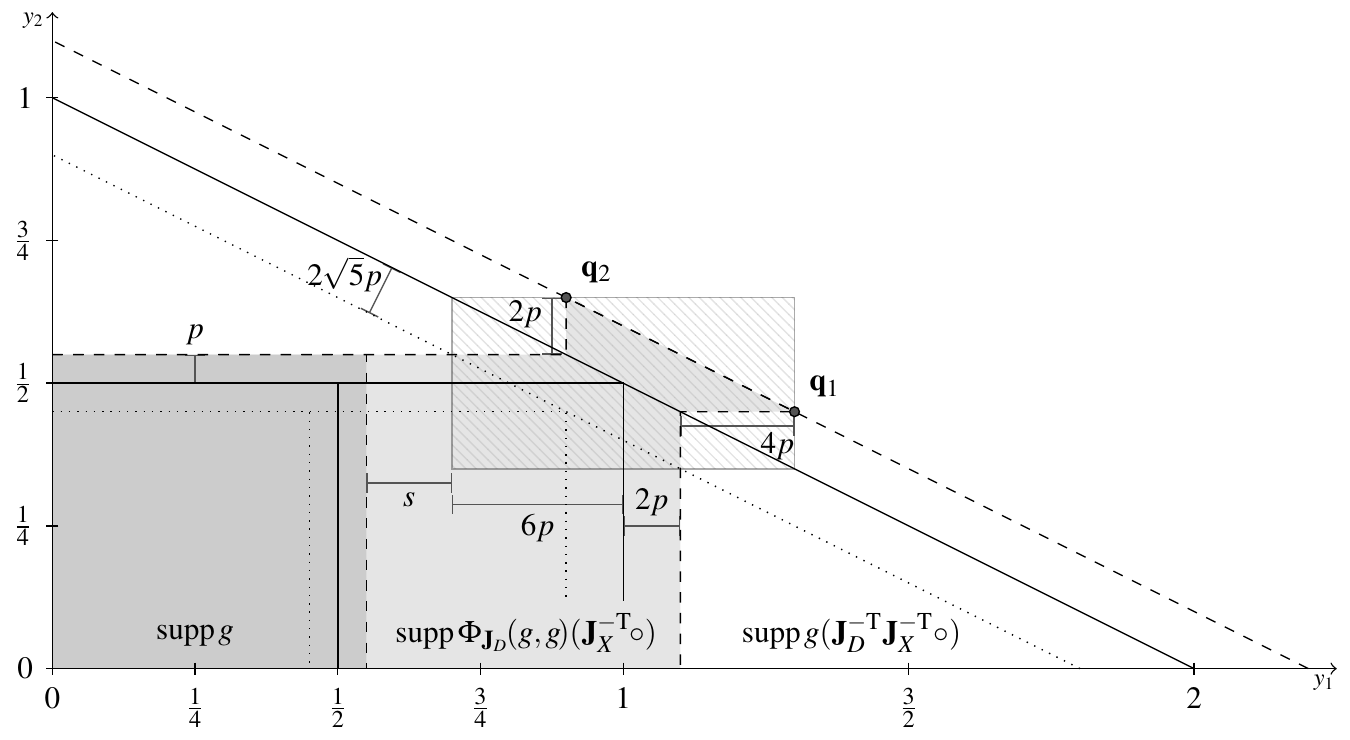}
				\captionsetup{skip=0\baselineskip}
				\caption{\( \mat{J}_{n-1}=\mat{J}_X \) and \( \mat{J}_n = \mat{J}_D \)}\label{subfig:SFSup20:XD}
			\end{subfigure}
			\begin{subfigure}[b]{\textwidth}\centering
					\includegraphics{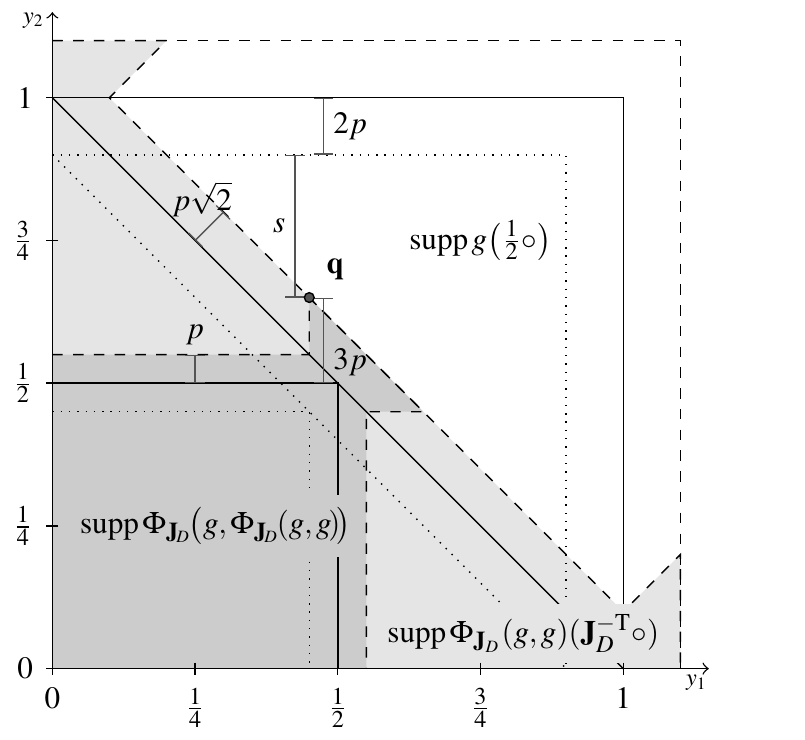}
				\captionsetup{skip=0\baselineskip}
				\caption{\( \mat{J}_{n-1}= \mat{J}_n = \mat{J}_D \)}\label{subfig:SFSup20:DD}
			\end{subfigure}
			\caption{Illustrations of the supports of three successive scaling functions, \( p=\tfrac{1}{20} \).}
		\end{figure}

		Let \( \mat{J}_{n-1}=\mat{J}_X \), cf. Figure~\ref{subfig:SFSup20:XD}. The points \( \vect{r}_x = \bigl(\tfrac{1}{2}-p,\tfrac{1}{2}-p\bigr)^\T\) and \( \vect{r}_y = \bigl(-\tfrac{1}{2}+p,\tfrac{1}{2}-p\bigr)^\T \) on the boundary of \( \Omega_p^2 \) fulfill \(\mat{J}_X^\T\mat{J}_D^\T\vect{r}_x = (2-4p,0)^\T \) and \(\mat{J}_X^\T\mat{J}_D^\T\vect{r}_y = (0,1-2p)^\T\), hence \(\sumJMaskOp{g}{g}{\mat{J}_D}(\mat{J}_X^{-\T}\circ) \) can be restricted to lie inside \( \Omega_{(6p,3p)}^2 \). It holds further, that
		\[
		\operatorname{supp} g \cap [1-6p,1+6p]\times\bigl[\tfrac{1}{2}-p,\tfrac{1}{2}+p\bigr] = \emptyset\text{.}
		\]
		Hence, we can apply Lemma~\ref{lem:gVereinfacht:X} if the intersection of this area with \( g^{\mat{J}_X} \) is empty. This area is shown in Figure~\ref{subfig:SFSup20:XD}) as the hatched area. If this intersection is empty, we obtain \(\sumJMaskOp{g}{g}{\mat{J}_D}(\mat{J}_X^{-\T}\circ)=g(\mat{J}_X^{-\T}\circ)\). Using \( \operatorname{supp} g \subset \Omega_p^2 \) the emptiness holds if and only if for the distance \( s \) shown in Figure~\ref{subfig:SFSup20:XD} it holds
		\(
		1-6p - \bigl(\tfrac{1}{2}+p\bigr) =: s \geq 0,\) which is equivalent to \(p\leq \frac{1}{14}\). Hence it holds
		\begin{align*}
			c_{\vect{k}}(\varphi_{\mat{M}_{n-2}}^{\dMatVec{n-1,n}})
			&=\frac{1}{\sqrt{m_{n-1}}}\sumJMaskOp[\big]{g}{ \sumJMaskOp{g}{g}{\mat{J}_D} }{\mat{J}_X}(\mat{M}_{n-2}^{-\T}\vect{k})\\
			&=\frac{1}{\sqrt{m_{n-1}}}\sumJMaskOp{g}{g}{\mat{J}_X}(\mat{M}_{n-2}^{-\T}\vect{k})\\
			&= c_{\vect{k}}(\varphi_{\mat{M}_{n-2}}^{\dMatVec{n-1,n-1}})\text{.}
		\end{align*}
		For \( p\leq \tfrac{1}{14} \), the requirements of Lemma~\ref{lem:gVereinfacht:X} are fulfilled and hence we obtain further \( \varphi_{\mat{M}_{n-2}}^{\dMatVec{n-1,n-1}} = \varphi_{\mat{M}_{n-2}}^{\emptyset} \).

		For \( \mat{J}_{n-1}=\mat{J}_D \), the supports for \( p=\tfrac{1}{20} \) of three successive scaling functions are shown in Figure~\ref{subfig:SFSup20:DD}. For the support \( \operatorname{supp} B_{\!\!\dMatVec{n-1,n}} = \operatorname{supp} \sumJMaskOp{g}{ \sumJMaskOp{g}{g}{\mat{J}_D} }{\mat{J}_D} \) let \( \vect{q} \) be the corner point having the maximal \( y \) coordinate. At \( q_{1} = \frac{1}{2}-p \), the horizontal boundary of \( \operatorname{supp} g \subset \Omega_p^2 \) coincides with the diagonal, which marks the boundary of \( \operatorname{supp} \mat{J}_D\Omega_0^2 \), where a part of the function \( g \) is present, shifted by \(\mat{J}_D^\T(0,1)^\T \). Perpendicular to this horizontal line, we have a line from \( \bigl(\tfrac{1}{2}-p,\tfrac{1}{2}+p\bigr)^\T \) to the boundary of the aforementioned support. Its end point is given by \( \vect{q} = \big(\tfrac{1}{2}-p,\tfrac{1}{2}+3p\bigr)^\T \). 

		For this point \( \vect{q} \), we have \( \vect{q}\in\mat{J}_D^\T\mat{J}_D^\T\Omega_{-p}^2 = 2\Omega_{-p}^2 = \bigl\{\vect{x}\ ;\  \|\vect{x}\|_{\infty} \leq 1-2p\bigr\}\) if and only if 
		\[
		\|\vect{q}\|_{\infty} = \frac{1}{2}+3p \leq 1-2p\text{,}\quad\text{which means }
		1-2p-\biggl(\frac{1}{2}+3p\biggr) =:s \geq 0,\text{ i.e. }
		p \leq \frac{1}{10}\text{.}
		\]
		This distance \( s \) is also shown in Figure~\ref{subfig:SFSup20:DD}. The line segment exists if and only if \( \operatorname{supp}  B_{\!\!\dMatVec{n-1,n}} \subset \mat{J}_D^\T\mat{J}_D^\T\Omega_{-p}^2\) holds. Hence by Lemma~\ref{lem:gSubset:J} we have for \( \vect{k}\in\mathbb Z^2 \) that
			\begin{align*}
				c_{\vect{k}}(\varphi_{\mat{M}_{n-2}}^{\dMatVec{n-1,n}})
				&=\frac{1}{\sqrt{m_{n-1}}}\sumJMaskOp[\big]{g}{ \sumJMaskOp{g}{g}{\mat{J}_D}  }{\mat{J}_D}(\mat{M}_{n-2}^{-\T}\vect{k})\\
				&=\frac{1}{\sqrt{m_{n-1}}}\sumJMaskOp{g}{g}{\mat{J}_D}(\mat{M}_{n-2}^{-\T}\vect{k})\\
				&= c_{\vect{k}}(\varphi_{\mat{M}_{n-2}}^{\dMatVec{n-1,n-1}})\text{.}\qedhere
			\end{align*}
	\end{proof}
	%
	%
	%
	For the case \( d>2 \) we can prove a similar statement applying the same arguments as before: We denote by \( \mathcal J_d \) the set of matrices containing \( \mat{J}_{x_i} \), \( i\in\{1,\ldots,d\} \), as a matrix, which scales the \( x_i \) axis by \( 2 \) and the matrices \( \mat{J}_{x_i,x_j} \), \( i\neq j \), \( i,j\in\{1,\ldots,d\} \) as the rotation by \( \tfrac{\pi}{4} \) in the \((x_i x_j) \)-plane, which scales this plane by \( \sqrt{2} \)\ at the same time. In other words \( \mathcal J_d \) is a generalization of the previously used matrices, especially we have \( \mathcal J_2 = \{\mat{J}_{\text{X}},\mat{J}_{\text{Y}},\mat{J}_{\text{D}}\} \).
	\begin{lem}\label{lem:dlVP:unabhaengigD}
		Let \( d>2 \), \( 0\leq p\leq\tfrac{1}{14}\) and an admissible function \( g: \mathbb R^d\to\mathbb C \) be given as in~\eqref{eq:dlVP:TPg} with \( \operatorname{supp} g \subset \Omega_p^d \). Further, let  a regular matrix \( \mat{M}_0\in\mathbb Z^{d\times d} \) and a vector of matrices \( \dMatVec{1,n}\in {\mathcal J_d}^n \), \( n\in\mathbb N \), be given, which fulfills the following statement
		\begin{equation}\label{eq:dgr2Restr}
			\mat{J}_l = \mat{J}_{x_i,x_j} \Rightarrow \mat{J}_{l-1}\in\{\mat{J}_{x_i},\mat{J}_{x_j},\mat{J}_{x_i,x_j}
			\},\quad i,j\in\{1,\ldots,d\},\ l=2,\ldots,n\text{.}
		\end{equation}
		Then, the scaling functions \( \varphi_{\mat{M}_l}^{\dMatVec{l+1,n}} \), \( l=0,\ldots,n \), of de la Vallée Poussin type fulfill
		\begin{enumerate}[label=\alph*)]
			\item\label{lem:dlVP:unabhaengigD:Skalierung} for \( \mat{J}_n  = \mat{J}_{x_i}\), \( i\in\{1,\ldots,d\} \), that 
			\(
			 	\varphi_{\mat{M}_{n-1}}^{\dMatVec{n,n}} = \varphi_{\mat{M}_{n-1}}^{\emptyset}\text{,}
			\)
			\item\label{lem:dlVP:unabhaengigD:Rotation} for \( \mat{J}_n = \mat{J}_{x_i,x_j} \), \( i\neq j \), \( i,j\in\{1,\ldots,d\} \), that 
			\(
				\varphi_{\mat{M}_{n-2}}^{\dMatVec{n-2,n}} = \varphi_{\mat{M}_{n-2}}^{\dMatVec{n-1,n-1}}\text{.}
			\)
		\end{enumerate}	
	\end{lem}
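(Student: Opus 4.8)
The plan is to reduce everything to the two matrices of Theorem~\ref{thm:dlVP:unabhaengig} by exploiting that each element of \( \mathcal J_d \) is the identity outside one or two coordinate directions, together with the tensor structure~\eqref{eq:dlVP:TPg} of \( g \). First I would isolate a \emph{factorisation principle} for the operator \( \Phi \). Suppose \( \mat J\in\mathcal J_d \) acts nontrivially only on the coordinates in a set \( S\subset\{1,\ldots,d\} \), so that \( \mat J \) is block-diagonal with an active block \( \mat J_S \) on \( S \) and the identity on \( S^c \); and suppose a function has the form \( B(\vect x)=C(\vect x_S)\prod_{k\notin S}B_k(x_k) \). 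Writing \( g_S:=\prod_{k\in S}g_k \), I would show
\(
\sumJMaskOp{g}{B}{\mat J}(\vect x)=\sumJMaskOp{g_S}{C}{\mat J_S}(\vect x_S)\prod_{k\notin S}B_k(x_k).
\)
The reason is that in each direction \( k\in S^c \) the matrix \( \mat J^\T \) restricts to an integer shift, so the inner sum \( g^{\mat J} \) collapses there to \( \sum_{z\in\mathbb Z}g_k(x_k+z)=1 \) by property~\ref{prop:g:sum}, while \( \mat J^{-\T} \) leaves those coordinates untouched and hence only reproduces the factors \( B_k(x_k) \).

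Granting this principle, part~\ref{lem:dlVP:unabhaengigD:Skalierung} is immediate with \( S=\{i\} \). For \( \mat J_n=\mat J_{x_i} \) the Fourier coefficient of \( \varphi_{\mat M_{n-1}}^{\dMatVec{n,n}} \) is \( \tfrac1{\sqrt{m_{n-1}}}\sumJMaskOp{g}{g}{\mat J_{x_i}}(\mat M_{n-1}^{-\T}\vect k) \), and the factorisation turns this into \( g_i^{(2)}(x_i)\,g_i(\tfrac{x_i}2)\prod_{k\neq i}g_k(x_k) \) evaluated at \( \vect x=\mat M_{n-1}^{-\T}\vect k \), where \( g_i^{(2)}:=\sum_{z}g_i(\circ+2z) \). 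The one-dimensional identity \( g_i^{(2)}(x_i)g_i(\tfrac{x_i}2)=g_i(x_i) \), valid for \( \operatorname{supp}g_i\subset\Omega_p^1 \) and \( p\leq\tfrac16 \), is exactly the single-direction content of Lemma~\ref{lem:gVereinfacht:X}; the product therefore collapses to \( g(\vect x) \) and the claim follows as in Theorem~\ref{thm:dlVP:unabhaengig}\,\ref{thm:dlVP:unabhaengig:Skalierung}.

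For part~\ref{lem:dlVP:unabhaengigD:Rotation} I would group the coordinates as \( T=\{i,j\} \) and \( T^c \) and keep \( T \) together throughout, since the coupling produced by a rotation \( \mat J_{x_i,x_j} \) is not separable across \( i \) and \( j \). Here the hypothesis~\eqref{eq:dgr2Restr} is decisive: when \( \mat J_n=\mat J_{x_i,x_j} \) it forces \( \mat J_{n-1}\in\{\mat J_{x_i},\mat J_{x_j},\mat J_{x_i,x_j}\} \), so both \( \mat J_{n-1} \) and \( \mat J_n \) are block-diagonal with respect to the splitting \( T\cup T^c \), their \( T \)-blocks being a copy of the two-dimensional pair treated in Theorem~\ref{thm:dlVP:unabhaengig}\,\ref{thm:dlVP:unabhaengig:Rotation} (namely \( \mat J_n|_T=\mat J_D \) and \( \mat J_{n-1}|_T\in\{\mat J_X,\mat J_Y,\mat J_D\} \)). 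Applying the factorisation principle twice, with \( S=T \), gives \( B_{\!\!\dMatVec{n-1,n}}(\vect x)=\tilde B^{(i,j)}(\vect x_T)\prod_{k\notin T}g_k(x_k) \), where \( \tilde B^{(i,j)} \) is precisely the planar iterated object built from \( g_i\otimes g_j \). The two-dimensional Theorem~\ref{thm:dlVP:unabhaengig}\,\ref{thm:dlVP:unabhaengig:Rotation} (which rests on Lemma~\ref{lem:gSubset:J} and requires \( p\leq\tfrac1{14} \)) collapses \( \tilde B^{(i,j)} \) to \( \sumJMaskOp{(g_i\otimes g_j)}{(g_i\otimes g_j)}{\mat J_{n-1}|_T} \), and reinstating the untouched factors \( g_k \), \( k\notin T \), yields \( B_{\!\!\dMatVec{n-1,n}}=B_{\!\!\dMatVec{n-1,n-1}} \), i.e.\ the asserted equality of scaling functions.

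The hard part is the bookkeeping of the factorisation for the \emph{iterated} operator in part~\ref{lem:dlVP:unabhaengigD:Rotation}: one must check that the outer summation \( \Phi_{\mat J_{n-1}} \) never couples the block \( T=\{i,j\} \) to a third direction. This is exactly what~\eqref{eq:dgr2Restr} rules out --- were \( \mat J_{n-1} \) a rotation mixing \( i \) (or \( j \)) with some index \( k\notin T \), the supports in three directions would interact and the reduction to the planar theorem would break down. With the constraint in force, every operation outside \( T \) is a trivial integer-shift summation that collapses by~\ref{prop:g:sum}, so no estimate beyond the two-dimensional case is needed and the threshold \( p\leq\tfrac1{14} \) is inherited directly from Theorem~\ref{thm:dlVP:unabhaengig}.
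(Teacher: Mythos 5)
Your proposal is correct and takes essentially the same route as the paper, whose entire proof consists of two sentences: part~\ref{lem:dlVP:unabhaengigD:Skalierung} is declared the higher-dimensional formulation of Theorem~\ref{thm:dlVP:unabhaengig}\,\ref{thm:dlVP:unabhaengig:Skalierung} and said to follow \enquote{by the same arguments}, while part~\ref{lem:dlVP:unabhaengigD:Rotation} follows because \eqref{eq:dgr2Restr} \enquote{restricts all argumentations to the \( (x_ix_j) \)-plane}. Your block-factorisation principle for \( \sumJMaskOp{g}{B}{\mat{J}} \) under the tensor structure \eqref{eq:dlVP:TPg} --- with the off-block sums collapsing to \( 1 \) via property~\ref{prop:g:sum} --- is exactly the mechanism the paper leaves implicit, so you have supplied a more detailed version of the paper's own reduction rather than a different argument.
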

	\begin{proof}
		Statement \ref{lem:dlVP:unabhaengigD:Skalierung} is the higher dimensional formulation of Theorem~\ref{thm:dlVP:unabhaengig}\,\ref{thm:dlVP:unabhaengig:Skalierung} and its proof follows directly by the same arguments. The second part follows from the fact, that~\eqref{eq:dgr2Restr} restricts all argumentations to the \( (x_i x_j) \)-plane and hence the same steps as in Theorem~\ref{thm:dlVP:unabhaengig}\,\ref{thm:dlVP:unabhaengig:Rotation} apply.
	\end{proof}
%
%
	\section{Example}\label{sec:Example} 
	\begin{figure}
		\enlargethispage{1.5\baselineskip}
		\setlength{\belowcaptionskip}{.65\baselineskip}
		\begin{subfigure}[b]{.5\textwidth}\centering
			\includegraphics[width=\textwidth]{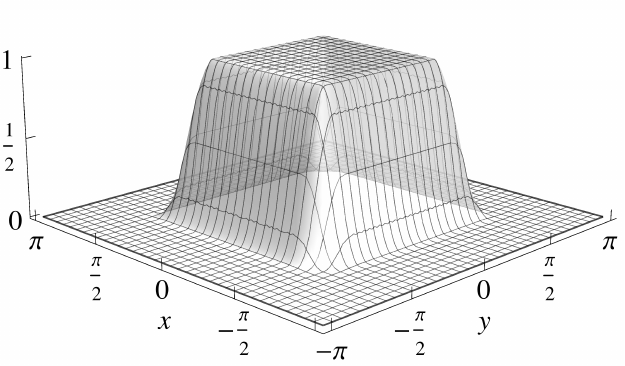}
			\caption{\( M_{\Theta}^c \), \( \Theta = \frac{\pi}{8}\bigl(\begin{smallmatrix}
				8&0&1&0&-1\\
				0&8&1&0&1
			\end{smallmatrix}\bigr) \) }\label{subfig:MTheta}
		\end{subfigure}
		\begin{subfigure}[b]{.5\textwidth}\centering
			\includegraphics{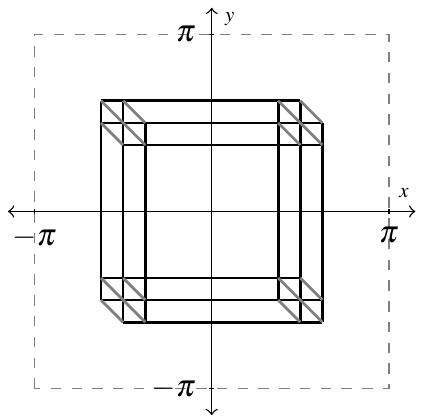}
			\caption{discontinuities of \( M_{\Theta}^c \)}\label{subfig:discontinuities}
		\end{subfigure}
		\begin{subfigure}[b]{.5\textwidth}\centering
			\includegraphics{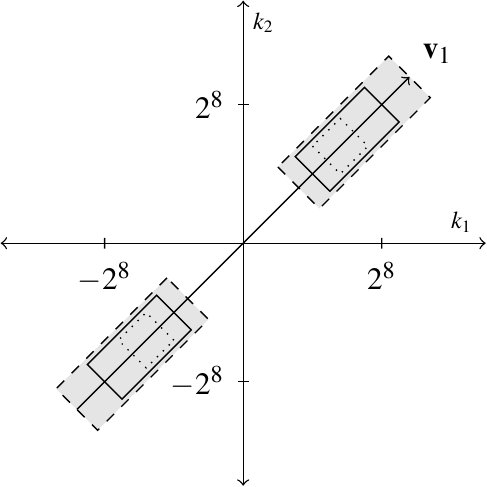}
 			\caption{frequency support of \( \psi_1 \)}\label{subfig:ckPsi1}
 		\end{subfigure}
 		\begin{subfigure}[b]{.5\textwidth}\centering
 			\includegraphics{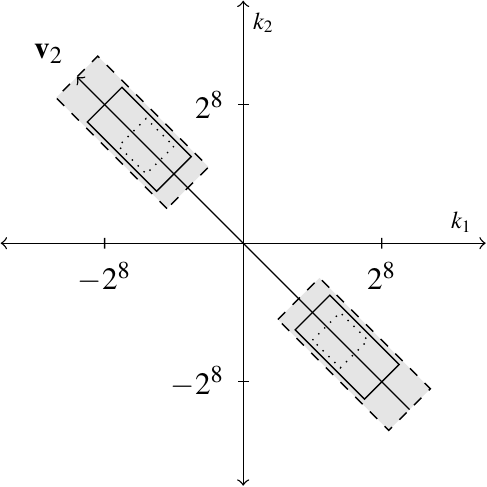}
			\caption{frequency support of \( \psi_2 \)}\label{subfig:ckPsi2}
		\end{subfigure}
		\begin{subfigure}[b]{.5\textwidth}\centering
			\includegraphics{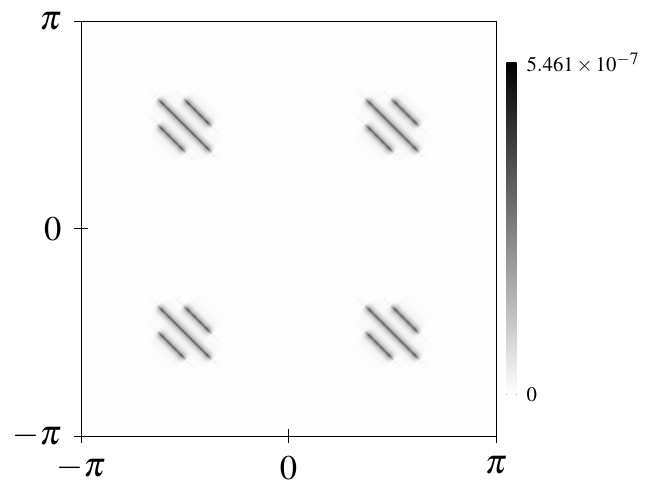}
			\caption{fraction \( g_{\mat{N}_1} \) for \( M_{\Theta}^c \) in \( V_{\mat{N}_1}^{\psi_1} \subset V_{\mat{M}_1}^{\varphi_{\mat{M}_1}^{\emptyset}}\)
			}\label{subfig:gM1}
		\end{subfigure}
		\begin{subfigure}[b]{.5\textwidth}\centering
			\includegraphics{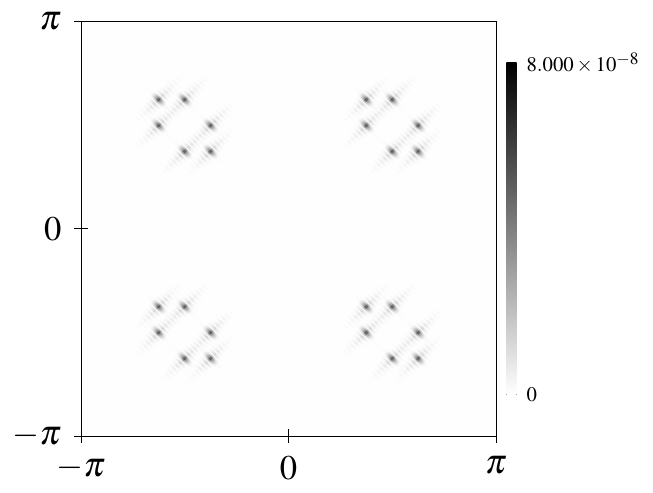}
			\caption{fraction \( g_{\mat{N}_2} \) for \( M_{\Theta}^c \) in
				\( V_{\mat{N}_2}^{\psi_2} \subset %
				V_{\mat{M}_2}^{\varphi_{\mat{M}_2}^{\emptyset}}\)}%
			\label{subfig:gM2}
		\end{subfigure}
		\captionsetup{skip=-2pt}
		\caption{%
		The box spline \( M_{\Theta}^c \) in \subref{subfig:MTheta} has discontinuities in its second (black) and third (gray) directional derivatives, orthogonal to the lines shown in \subref{subfig:discontinuities}. Sampling and decomposing with \( \mat{M}_k=\mat{J}_k\mat{N}_k\), \(k=1,2 \), we obtain two wavelets \( \psi_1 \)  and \(\psi_2 \), whose Fourier coefficients are samplings of the functions shown in \subref{subfig:ckPsi1} and \subref{subfig:ckPsi2}. The wavelet parts \( g_{\mat{N}_1} \in V_{\mat{N}_1}^{\psi_1} \) and \( g_{\mat{N}_2} \in V_{\mat{N}_2}^{\psi_2} \) are shown in \subref{subfig:gM1} and \subref{subfig:gM2}.}
	\end{figure}
	Consider the centered box spline \( \mat{M}_{\Theta}^c \), \( \Theta = \frac{\pi}{8}\bigl(\begin{smallmatrix}
		8&0&1&0&-1\\
		0&8&1&0&1
	\end{smallmatrix}\bigr) \), which is shown in Figure~\ref{subfig:MTheta}. It is a two times differentiable function along any line on the torus \( \mathbb T^d \). Along its second and third directional derivatives, there are discontinuities, which are depicted in Figure~\ref{subfig:discontinuities}.
	
	Then, the following steps are performed using the software package~\citep{Be13url}, which was written in \emph{Mathematica 9}. It contains implementations for generating scaling functions and wavelets, sampling arbitrary functions on patterns \(\pSet[S]{\mat{M}}\) for regular matrices \(\mat{M}\in\mathbb Z^{d\times d}\), performing a change of basis from the interpolatory basis of \(V_{\mat{M}}^{\emptyset}\) into the translates of a de la Vallée Poussin type scaling function \(\varphi_{\mat{M}}\)--- which of course depend on an admissible function \(g\) ---, performing both the Fourier transform on arbitrary patterns \(\pSet{\mat{M}}\), and the wavelet decomposition for an arbitrary chain of de la Vallée Poussin type wavelets, and displaying the result.
	
	We take a look at the sampling obtained from different matrices \( \mat{M}_1 = \bigl(\begin{smallmatrix} 512&512\\-64&64	\end{smallmatrix}\bigr)\) and \( \mat{M}_2 = \bigl(
	\begin{smallmatrix}
		64&64\\-512&512
	\end{smallmatrix}
	\bigr) \), their decompositions \( \mat{M}_1 = \mat{J}_{1}\mat{N}_1 \) and \( \mat{M}_2 = \mat{J}_{2}\mat{N}_2\), where we choose \( \mat{J}_1 = \mat{J}_{\text{X}} \), \( \mat{J}_2 = \mat{J}_{\text{Y}} \), and \( g=B_{\vect{\alpha}}\), \(\vect{\alpha} = \tfrac{1}{10}(1,1)^\T \). This yields two different wavelets \( \psi_1 \) and \( \psi_2 \). Their frequency supports are shown in Figures \ref{subfig:ckPsi1} and \subref{subfig:ckPsi2}. For \( k\in\{1,2\} \) the dashed line marks the boundary of the support of \( \tilde B_{(\mat{J}_k)}(\mat{N}_k^{-\T}\circ) \), the dotted lines encircle the plateau of the wavelet. For both, the direction through the center of their plateaus is drawn as \( \vect{v}_k \). If we sample \( M_{\Theta}^c \) on the pattern \( \pSet{\mat{M}_k} \) and use the fundamental interpolant of \( V_{\mat{M}_k}^{\varphi_{\mat{M}_k}^{\emptyset}} \), we obtain an approximation \( f_{\mat{M}_k} \) in this space, cf. \citep{BergmannPrestin:2013}.
	
	Using the wavelet transform, we obtain the lines of discontinuities of the third directional derivatives analogously to \citep[Section 6.2]{Bergmann:2013}, where a similar box spline was examined with wavelets from the Dirichlet case. With the de la Vallée Poussin setting the lines do posses less artifacts and their amplitude is higher. While the first decomposition in the mentioned Section 6.2 of \citep{Bergmann:2013} was only able to detect both diagonals at the same time with the one level of decomposition, this example also demonstrates the ability to look at the diagonal lines separately. We see that orthogonal to \( \vect{v}_1 \) lie all 12 parallel lines of discontinuities in the third directional derivative. They are obtained when looking at \( g_{\mat{N}_1} \in V_{\mat{N}_1}^{\psi_1}\), see Figure~\ref{subfig:gM1}). The only discontinuities of the third directional derivative along \( \vect{v}_2 \) are the 24 singular points at the ends of the lines from the previous case. The corresponding function is shown in Figure~\ref{subfig:gM1}).
	
	In fact, both wavelets can also be obtained by choosing two different factorizations of the matrix \( \mat{M} = 
	\bigl(\begin{smallmatrix}
		1024&0\\0&1024
	\end{smallmatrix}\bigr) \), i.e. the idea of having an \( 1024\times1024 \) pixel image of the box spline \(M_{\Theta}^c\). These factorizations are \( \mat{M} = \mat{J}_{\text{D}}\mat{J}_{\text{Y}}\mat{J}_{\text{Y}}\mat{J}_{\text{Y}}\mat{M}_1 \) and \( \mat{M} = \mat{J}_{\text{D}}\mat{J}_{\text{X}}\mat{J}_{\text{X}}\mat{J}_{\text{X}}\mat{M}_2 \). For the second decomposition in each of these factorizations, Theorem \ref{thm:dlVP:unabhaengig} cannot be applied due to \( \alpha_j > \frac{1}{14}, j=1,2\). Though the following factors again yield, that just one dilation matrix \( \mat{J}_l \) is needed to construct the scaling function and wavelet of de la Vallée Poussin type.
%
%
	\section{Conclusion} 
	In this paper, we examined a characterization of a periodic anisotropic multiresolution analysis \( \bigl(\{\mat{J}_k\}_{k>0},\{V_j\}_{j\geq 0}\bigr) \) in order to introduce and investigate multivariate scaling functions of de la Vallée Poussin type \( \varphi_j \) that posses a certain decay in their Fourier coefficients. The decay is given by an admissible function \( g \) which can be chosen quite generally. Especially for the one-dimensional case, certain functions \( g \) resemble the Dirichlet and Féjer kernel and all de la Vallée Poussin means. In the multivariate case, the Dirichlet kernels are also a special case of the presented construction.
	
	With a set of regular matrices \( \mat{J}_1,\ldots,\mat{J}_n \), these scaling functions of de la Vallée Poussin type yield a finite sequence of nested shift-invariant spaces. For the dyadic case, i.e. where all dilation matrices \( \mat{J}_l \) are of determinant \( 2 \), the framework also yields a similar construction for the wavelets that form the orthogonal complements between the nested spaces by their shifts. When decomposing a single function \( f \) into fractions in these wavelet spaces, we obtain directional information about \( f \).

In the construction, there are only a few restrictions on the function \(g\), the sequence of wavelets \( \psi_{\mat{M}_l}^{(\mat{J}_l)} \), \( l=1,\ldots,n \), is based on. We introduced the notion of admissibility for \(g\), i.e. having compact support and being a partition of unity. Extending the presented construction, one could also introduce a sequence \(g_j\) of such admissible functions and perform the construction using a different admissible function for each scaling function. How smoothness properties of the function \(g\) characterize the localization properties of the sequence of wavelets, is a topic for future research.
	
	While in the general construction, the scaling functions of de la Vallée Poussin type \( \varphi_{\mat{M}_l}^{\mathcal J_{l+1,n}} \), \( l=0,\ldots,n \), depend on a complete vector \( \mathcal J_{l+1,n} = \bigl(\mat{J}_{l+1},\ldots,\mat{J}_n\bigr) \) of matrices, Theorem \ref{thm:dlVP:unabhaengig} and Lemma~\ref{lem:dlVP:unabhaengigD} examine this vector in more detail. In particular, for a further restriction to \( g \), i.e. having a certain compact support, we obtain the identity \( \varphi_{\mat{M}_l}^{\dMatVec{l+1,n}} = \varphi_{\mat{M}_l}^{\dMatVec{l+1,l+1}} = \varphi_{\mat{M}_l}^{(\mat{J}_{l+1})} \) using the set of matrices from Dirichlet case in \citep[Section 6]{LangemannPrestin:2010}. For the dyadic case we also obtain the corresponding wavelets \( \psi_{\mat{M}_l}^{(\mat{J}_l)} \), \( l=1,\ldots,n \). In this setting, the MRA may also contain any finite sequence of shearing matrices \( \mat{J}_{Y}^{\pm},\mat{J}_{X}^{\pm} \) and their higher dimensional generalizations. Then, the matrix vector \( \mathcal J_{j+1,n} \) in each definition of a wavelet chain is longer than just \( 1 \), but still finite for any scaling function \( \varphi_j \).

These functions can now be used to examine a broad range of directional decompositions of a given function \( f \). On the one hand, for a given matrix \( \mat{M} \) on whose pattern \( 2\pi\pSet{\mat{M}} \) the function was sampled, there is a huge variety of matrices to decompose \( \mat{M} \), even for just the dyadic case. These decompositions are given by any factorization of \( \mat{M} \) into a product of matrices with determinant \( 2 \). On the other hand, for a given preference of one ore more directions, these functions of de la Vallée Poussin type give rise to many MRAs that prefer this set of “directions of interest” in their wavelet spaces. This enables a huge variety of applications towards anisotropic image decomposition with fast algorithms.

\end{document}